\title{%
  Density of Self-Dual\\
  Automorphic Representations of $GL_n(\mathbb{A_Q})$%
}
\author{V\'\i t\v ezslav Kala}{Kala, V\'\i t\v ezslav}
\newcommand{\depth}{\mathrm{depth\ }}
\newcommand{\Tr}{\mathrm{Tr}}
\newcommand{\aq}{\mathbb A}
\newcommand{\lieg}{\mathfrak g}
\newcommand{\lieh}{\mathfrak h}
\newcommand{\liea}{\mathfrak a}
\newcommand{\liegc}{\mathfrak g_{\mathbb C}}
\newcommand{\liehc}{\mathfrak h_{\mathbb C}}
\newcommand{\lieac}{\mathfrak a_{\mathbb C}}
\newcommand{\liesl}{\mathfrak {sl}}
\newcommand{\liesp}{\mathfrak {sp}}
\newcommand{\lieso}{\mathfrak {so}}
\sbox{\proofbox}{$\Box$}
\begin{document}

%\special{papersize=8.5in,11in}
%\setlength{\pdfpageheight}{\paperheight}
%\setlength{\pdfpagewidth}{\paperwidth}
% You may need to change the horizontal offset to do what you
% want.  Setting \hoffset to a negative value moves all printed
% material to the left on all pages; setting it to a positive value
% moves all printed material to the right on all pages; not setting
% it keeps all printed material in it's default position.  \voffset
% is the vertical offset: use negative value for up; don't set if
% you want to use default position; use positive for down.
%\hoffset = 0.2truein
%\voffset = -0.2truein

% Start a new volume for your thesis.  All theses must have at least one
% volume.  If your thesis is too long to fit in one binder put another
% "\volume" between chapters below.
\volume

% Front matter (dedication, etc.).
%
%  revised  front.tex  2011-09-02  Mark Senn  http://engineering.purdue.edu/~mark
%  created  front.tex  2003-06-02  Mark Senn  http://engineering.purdue.edu/~mark
%
%  This is ``front matter'' for the thesis.
%
%  Regarding ``References'' below:
%      KEY    MEANING
%      PU     ``A Manual for the Preparation of Graduate Theses'',
%             The Graduate School, Purdue University, 1996.
%      TCMOS  The Chicago Manual of Style, Edition 14.
%      WNNCD  Webster's Ninth New Collegiate Dictionary.
%
%  Lines marked with "%%" may need to be changed.
%

  % Dedication page is optional.
  % A name and often a message in tribute to a person or cause.
  % References: PU 15, WNNCD 332.
%\begin{dedication}
%  This is the dedication.
%\end{dedication}

  % Acknowledgements page is optional but most theses include
  % a brief statement of apreciation or recognition of special
  % assistance.
  % Reference: PU 16.
\begin{acknowledgments}
I want to greatly thank my advisor Professor Shahidi for all his valuable advice, support and encouragements during my work with him.
I wish to thank Professors Goldberg, Liu, and McReynolds for agreeing to serve on my committee and for their help and useful suggestions. 
I am grateful to Professors Arthur, Jiang, Rohrlich, and J.-K. Yu, and to Britain Cox, Radhika Ganapathy, Yeansu Kim, Jing Feng Lau, Manish Mishra, Abhishek Parab, and Sohei Yasuda, for interesting mathematical discussions and suggestions which have influenced and improved this thesis.
I also want to thank many other professors and staff at Purdue and Charles University for their
help and support. Let me mention only Professors Lipshitz and Dr\' apal, whose mentorship and support have been very important for me.
During my first three years at Purdue I have been fully supported by the International Fulbright Science and Technology Award -- I greatly appreciate this opportunity.
And finally, I thank my family and all my friends, both home in the Czech Republic and in the US. Without you it wouldn't have been so much fun!
\end{acknowledgments}

  % The preface is optional.
  % References: PU 16, TCMOS 1.49, WNNCD 927.
%\begin{preface}
%  This is the preface.
%\end{preface}

  % The Table of Contents is required.
  % The Table of Contents will be automatically created for you
  % using information you supply in
  %     \chapter
  %     \section
  %     \subsection
  %     \subsubsection
  % commands.
  % Reference: PU 16.
\tableofcontents

  % If your thesis has tables, a list of tables is required.
  % The List of Tables will be automatically created for you using
  % information you supply in
  %     \begin{table} ... \end{table}
  % environments.
  % Reference: PU 16.
\listoftables

  % If your thesis has figures, a list of figures is required.
  % The List of Figures will be automatically created for you using
  % information you supply in
  %     \begin{figure} ... \end{figure}
  % environments.
  % Reference: PU 16.
%\listoffigures

  % List of Symbols is optional.
  % Reference: PU 17.
%\begin{symbols}
%  $m$& mass\cr
%  $v$& velocity\cr
%\end{symbols}

  % List of Abbreviations is optional.
  % Reference: PU 17.
%\begin{abbreviations}
%  abbr& abbreviation\cr
%  bcf& billion cubic feet\cr
%  BMOC& big man on campus\cr
%\end{abbreviations}

  % Nomenclature is optional.
  % Reference: PU 17.
%\begin{nomenclature}
%  Alanine& 2-Aminopropanoic acid\cr
%  Valine& 2-Amino-3-methylbutanoic acid\cr
%\end{nomenclature}

  % Glossary is optional
  % Reference: PU 17.
%\begin{glossary}
%  chick& female, usually young\cr
%  dude& male, usually young\cr
%\end{glossary}

  % Abstract is required.
  % Note that the information for the first paragraph of the output
  % doesn't need to be input here...it is put in automatically from
  % information you supplied earlier using \title, \author, \degree,
  % and \majorprof.
  % Reference: PU 17.
\begin{abstract}
We study the number $N_{\mathrm{sd}}^K(\lambda)$ of self-dual cuspidal automorphic representations of $GL_N(\mathbb{A_Q})$ which are $K$-spherical with respect to a fixed compact subgroup $K$ and whose Laplacian eigenvalue is $\leq \lambda$.
We prove Weak Weyl's Law for $N_{\mathrm{sd}}^K(\lambda)$ in the form that there are positive constants $c_1, c_2$ (depending on $K$) and $d$ such that
$c_1\lambda^{d/2}\leq N_{\mathrm{sd}}^K(\lambda)\leq c_2\lambda^{d/2}$ for all sufficiently large $\lambda$.
When $N=2n$ is even and $K$ is a maximal compact subgroup at all places, 
we prove Weyl's Law for the number of self-dual representations, i.e.,  $N_{\mathrm{sd}}^K(\lambda)=c\lambda^{d/2}+o(\lambda^{d/2})$.
These results are based on considering functorial descents of self-dual representations $\Pi$ to quasisplit classical groups $\mathbf G$.

In order to relate the properties of representations under functoriality, we discuss the infinitesimal character of the real component $\Pi_\infty$, which determines the Laplacian eigenvalue. To relate the existence of $K$-fixed vectors, we study the depth of $p$-adic representations, proving a weak version of depth preservation. We also consider the explicit construction of local descent, which allows us to improve the results towards depth preservation for generic representations.

\end{abstract}

% Put chapter \include commands here.
% CHANGE \include{...} COMMANDS BELOW?
%
%  revised  introduction.tex  2011-09-02  Mark Senn  http://engineering.purdue.edu/~mark
%  created  introduction.tex  2002-06-03  Mark Senn  http://engineering.purdue.edu/~mark
%
%  This is the introduction chapter for a simple, example thesis.
%

\chapter{Introduction}

One of the important problems in modern number theory is the study of automorphic representations, i.e., representations which appear in the spectral decomposition of $L^2(\mathbf G(F)\backslash \mathbf G(\mathbb A_F))$ for a reductive group $\mathbf G$ over a number field $F$. 

Among the first incarnations of this idea was the theory of modular and Maass forms, which are essentially just automorphic forms on $GL_2(\mathbb{A_Q})$. 
It is not hard to construct examples of modular forms, but Maass forms are much harder to handle concretely -- in fact, at first it was not clear if they existed at all. Their existence and asymptotics were first proved by Selberg \cite{se} in the form of Weyl's Law, which he deduced from his trace formula.

In the classical setting, Weyl's Law provides the asymptotic estimates for the number of eigenvalues of the Laplacian operator on a compact Riemannian manifold. Automorphic forms are, among other things, eigenfunctions of the Laplacian on locally symmetric spaces $A_\mathbb R G(F)\backslash G(\mathbb A_F)/K$ for suitable compact subgroups $K$ of $G(\mathbb A_F)$. 
Weyl's Law is expected to hold in some generality even in this setting,
and has been proven in a number of cases, e.g., by M\" uller \cite{m} for $GL_N(\mathbb{A_Q})$ and by Lindenstrauss-Venkatesh \cite{lv} for split adjoint semisimple groups $\mathbf G$. See Section \ref{backgr weyl} for a precise statement and more background information on Weyl's Law.

The principle of functoriality, predicting transfers of automorphic representations between certain pairs of groups $G_1$ and $G_2$, plays a crucial role in the Langlands program. 
There has been quite a lot of progress recently in establishing functoriality from (quasisplit) classical groups to $GL_N$, first by Cogdell, Kim, Piatetski-Shapiro, and Shahidi \cite{ckpss} for generic representations and then by Arthur \cite{arthur} for all representations.
In these cases, the images of functorial lifts are among the self-dual automorphic representations of $GL_N(\mathbb{A_Q})$, as it follows from the work of Ginsburg, Rallis, and Soudry \cite{grs}. See Section \ref{backgr funct} for more information on functoriality. 

However, being self-dual is a fairly restrictive condition. Thus we may ask how many self-dual
representations are there? Do they have density zero among all representations?

In a local setting, Adler \cite{adler} gave explicit formulas for the number of depth zero self-dual supercuspidal representations of $GL_N$ over a local field.
For (global) Galois representations, 
Rohrlich \cite{rohrlich} has proved that the density is zero for representations 
$\mathrm{Gal}(\bar{\mathbb Q}/\mathbb Q)\rightarrow GL_N(\mathbb C)$ when $N=1$ or $2$ and obtained some conditional results for $N=3$.

The main result of our thesis is the following theorem, concerning the number of self-dual cuspidal automorphic representations of $GL_N(\mathbb{A_Q})$:

\begin{theorem}\label{intro main}
Let $N\in\mathbb N$.
For $M\in\mathbb N$ denote by $K(M)$ the principal congruence subgroup of $GL_N(\mathbb {A_Q})$, i.e., 
$K(M)=K_\infty\times\prod K_p$ with $K_\infty=O(N)$ and $K_p$ consisting of matrices congruent to the identity modulo $p^{m(p)}$, where $M=\prod p^{m(p)}$.

For $\lambda>0$, let $$N^{K(M)}_{\mathrm{sd}}(\lambda)=\sum_{\mathrm{sd,\ }\lambda(\Pi)\leq\lambda} \dim\Pi^{K(M)},$$ the sum ranging over self-dual cuspidal automorphic representations $\Pi$ of $GL_N(\mathbb{A_Q})$ with Laplacian eigenvalue $\leq\lambda$. 

Then there are positive constants $c_1$ and $c_2$ (depending on $K$, but not on $\lambda$) such that, for sufficiently large $\lambda$, one has
$$c_1\lambda^{d/2}\leq N^{K(M)}_{\mathrm{sd}}(\lambda)\leq c_2\lambda^{d/2},$$
where $d=n^2+n$ for $N=2n+\varepsilon$ with $\varepsilon=0, 1$.
\end{theorem}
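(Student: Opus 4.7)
The overall strategy is to exploit the functorial descent from self-dual cuspidal automorphic representations of $GL_N(\mathbb{A_Q})$ to cuspidal representations of a quasisplit classical group $\mathbf{G}$. By the work of Cogdell--Kim--Piatetski-Shapiro--Shahidi and Arthur, together with the descent theory of Ginzburg--Rallis--Soudry, every self-dual cuspidal $\Pi$ arises, up to finite ambiguity, as a functorial lift from some classical group with dual a classical subgroup of $GL_N(\mathbb{C})$. The candidates for $\mathbf{G}$ are $SO_{2n+1}$, $Sp_{2n}$, and even orthogonal groups; for the leading asymptotics one expects $\mathbf{G}=SO_{2n+1}$ when $N=2n$ and $\mathbf{G}=Sp_{2n}$ when $N=2n+1$, both having symmetric space of real dimension $d=n^2+n$, which explains why a single formula works in both parities.

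For the lower bound, I would fix this distinguished $\mathbf{G}$ and produce a family of cuspidal representations $\sigma$ of $\mathbf{G}(\mathbb{A_Q})$ whose lifts $\Pi$ are self-dual, cuspidal, possess a nonzero $K(M)$-fixed vector, and have Laplacian eigenvalue $\leq\lambda$. The supply of such $\sigma$ is controlled by the already-established Weyl's Law on the classical group $\mathbf{G}$: the number of $\sigma$ with a $K'$-fixed vector (for a suitable open compact $K'\subset\mathbf{G}(\mathbb{A_Q})$) and bounded infinitesimal character grows like $c\lambda^{d/2}$. To push this to $\Pi$, I would invoke two ingredients developed earlier in the thesis: first, the compatibility between the infinitesimal character of $\sigma_\infty$ and that of $\Pi_\infty$, so that an eigenvalue bound on $\sigma$ gives one on $\Pi$; second, the weak depth-preservation result, which guarantees that if $\sigma_p$ has depth bounded in terms of $m(p)$ then $\Pi_p$ retains a $K_p$-fixed vector.

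For the upper bound, I would sum over the finite list of classical $\mathbf{G}$'s whose lifts can meet $GL_N$. Each self-dual cuspidal $\Pi$ is (a constituent of) the lift of some $\sigma$ on some such $\mathbf{G}$, with descent multiplicities bounded by Arthur's multiplicity formula. Applying the upper-bound half of Weyl's Law on each $\mathbf{G}$, together with the same matching of infinitesimal characters and the depth inequality in the reverse direction, yields $N_{\mathrm{sd}}^{K(M)}(\lambda)\leq c_2\lambda^{d/2}$. The leading term comes from the distinguished $\mathbf{G}$, so both bounds exhibit the same exponent $d/2$.

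The main obstacles I expect are twofold. First, depth behaviour under functorial transfer is subtle and only a weak form of preservation is available; one must check that this weak form is still sufficient to match a congruence structure $K'$ on $\mathbf{G}$ with $K(M)$ on $GL_N$, possibly at the cost of enlarging $M$ by a controlled amount that does not affect the asymptotic exponent. Second, Arthur's classification parameterizes representations by global parameters not all of which produce cuspidal $\Pi$ on $GL_N$; one needs to argue that the simple generic parameters (those yielding cuspidal self-dual $\Pi$) both dominate the count from below and absorb all the relevant contributions from above, so that endoscopic and non-tempered pieces can be either isolated for the lower bound or crudely majorized for the upper bound without disturbing the power of $\lambda$.
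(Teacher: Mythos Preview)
Your outline matches the paper's strategy: the upper bound proceeds by descending each self-dual $\Pi$ to one of finitely many classical $\mathbf G$ (finiteness via the conductor bound on the central character, Lemma~\ref{upper sk}) and invoking Donnelly's upper bound there; the lower bound proceeds by lifting from $\mathbf G=SO_{2n+1}$ or $Sp_{2n}$ and using (weak) Weyl's Law on $\mathbf G$, after subtracting the cuspidal $\sigma$ whose lift to $GL_N$ is non-cuspidal (Proposition~\ref{lower nc}), exactly as you anticipate in your second obstacle.

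One point of execution differs from your sketch and is worth flagging. For the lower bound the paper does \emph{not} match $K(M)$ with a congruence $K'$ on $\mathbf G$ via depth bounds. Since $K(M)\subset K(1)$ gives $N_{\mathrm{sd}}^{K(1)}(\lambda)\leq N_{\mathrm{sd}}^{K(M)}(\lambda)$, it suffices to treat $K=K(1)$ and take $K'$ maximal everywhere; then unramified $\sigma_p$ lift to unramified $\Pi_p$ via Satake and no depth argument is needed in this direction at all. This reduction to maximal level is also what makes the injectivity of $\sigma\mapsto\Pi$ tractable: the paper uses the consequence of Arthur's classification (Lemma~\ref{arthur lemma}) that a global $A$-packet contains at most one element spherical for a \emph{maximal} $K'$, so distinct $K'$-spherical $\sigma$ yield distinct $\Pi$. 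Your proposed route of handling general $K'$ via depth in the $\sigma\to\Pi$ direction and bounding packet contributions by Arthur's multiplicity formula is in principle available (Proposition~\ref{rs}(b) gives the needed depth inequality), but controlling $K'$-spherical members of $A$-packets for non-maximal $K'$ is explicitly left open in the paper; the monotonicity reduction sidesteps this entirely.
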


On the other hand, by Weyl's Law (Theorem \ref{weyl law}) the number of all cuspidal automorphic representations of $GL_N(\mathbb {A_Q})$ (counted similarly) is asymptotically $c\lambda^{D/2}$ for $D=(N^2+N-2)/2$. Hence when $N\neq 2$, the density of self-dual cusp forms is indeed zero.
When $N=2$, self-dual cusp forms have positive density due to the fact that $SO_3=PGL_2$ -- the lifts from this group to $GL_2$ provide for the positive proportion of self-dual representations.

%\

To prove Theorem \ref{intro main}, we prove upper and lower bounds as
Theorems \ref{upper upper} and \ref{lower lower}.
The general idea of the proof is to consider the descent $\pi$  of each self-dual cuspidal automorphic representation $\Pi$ to one of the quasisplit classical groups $\mathbf G(\mathbb A_\mathbb Q)$ and to use results towards Weyl's Law on $\mathbf G(\mathbb A_\mathbb Q)$.
For that we need to relate the relevant properties of $\Pi$ and $\pi$, mainly the Laplacian eigenvalue and the existence of $K$-fixed vectors. 

The Laplacian eigenvalue of an automorphic representation $\sigma$ is determined by the infinitesimal character $\mu$ of the real component $\sigma_\infty$,
namely, by evaluating $\mu$ on the Casimir element $\Omega$.
For real groups we have the Langlands reciprocity and there is an explicit description of the infinitesimal character of $\sigma_\infty$ in terms of the
associated parameter $\varphi: W_\mathbb R\rightarrow$ $^LG$ (here $W_\mathbb R$ is the Weil group and $^LG$ is the Langlands dual group, defined in Section \ref{backgr funct}). In our setting of functoriality from classical groups to $GL_N$, 
the parameter of the functorial lift is just obtained as the composition 
$$\iota\circ\varphi: W_\mathbb R\rightarrow\ ^LG\hookrightarrow\ ^L GL_N.$$
This enables us to conclude that the infinitesimal character, and hence also the Laplacian eigenvalue, are essentially preserved under functoriality, as proved in Theorem \ref{main laplace}.

We also need to relate the existence of $K$-fixed vectors for $\pi$ and $\Pi$.
This is a purely local question, but it is still quite involved, 
as we need to use the local notion of depth to obtain Proposition \ref{upper K fixed}. We shall discuss this in more detail shortly, but first let's explain how we use these results
to prove Theorem \ref{intro main}, starting with the upper bound.

\

Let $K=K(M)$ be a principal congruence subgroup as above and let $\lambda>0$.
We need to estimate
$N^{K}_{\mathrm{sd}}(\lambda)=\sum_{\mathrm{sd,\ }\lambda(\Pi)\leq\lambda} \dim\Pi^{K},$ the sum ranging over self-dual cuspidal automorphic representations $\Pi$ of $GL_N(\mathbb{A_Q})$ with Laplacian eigenvalue $\leq\lambda$.
Each of these self-dual $\Pi$ descends to one of the 
split or non-split symplectic or special orthogonal groups, determined  by the poles of the symmetric square and exterior square $L$-functions 
and by the central character $\chi$ in the case of the quasisplit even orthogonal groups. The restriction of $\chi$ to the compact subgroup $K$ has to be trivial,
and so there are only finitely many possibilities for $\chi$ and (by class field theory) thus only finitely many possibilities for the
corresponding quasisplit group $SO_{2n}^*$ (see Lemma \ref{upper sk}).

Thus we can express $N^K_{\mathrm{sd}}(\lambda)$ as a finite sum $\sum_{G} N^K_{\mathrm{sd}, \mathbf G}(\lambda)$ over different classical groups to which $\Pi$ can descend. 
By \ref{main laplace} and \ref{upper K fixed}, we have a corresponding bound on the Laplacians and compact subgroups 
for the descents $\pi$ on each of these groups $G$, and so we can estimate $N^K_{\mathrm{sd}, \mathbf G}(\lambda)$ from above by the number of all cuspidal automorphic representations
on $G$ as in Weyl's Law. It is not yet known on classical groups, but (after translating from the adelic to classical language)
Donnelly's result \cite{donnelly} gives an upper bound, which is all we need.

The idea of the proof of the lower bound is similar. It suffices to consider only the case when $K$ is a maximal compact subgroup at all places, which somewhat simplifies the situation, but we have to overcome two additional difficulties: First, a cuspidal automorphic representation 
on a classical group need not lift to a cuspidal representation on $GL_N$. 
However, we show that asymptotically these representations have density zero
(Proposition \ref{lower nc}). And second, we need to obtain an estimate 
on the size of ($K$-fixed elements of) global packets. In the case when $K$ is maximal at all places, it in fact follows from Arthur's work that each global packet has at most one $K$-spherical element (Lemma \ref{arthur lemma}).
The lower bound then follows from Weak Weyl's Law for $\mathbf G(\mathbb {A_Q})$
(see Theorem \ref{weak weyl} and the discussion surrounding it).

In the proof of the lower bound, we are relying on Arthur's results \cite{arthur}, which are conditional on the stabilization of the twisted trace formula.
To prove the upper bound, it suffices to consider only functoriality for generic representations, which is available by \cite{cps} unconditionally.

Note that when $N=2n$ and $K$ is a maximal compact subgroup, Weyl's Law is known for $\mathbf G(\mathbb{A_Q})=SO_{2n+1}(\mathbb{A_Q})$ \cite{lv} and we can prove the exact asymptotics (without an error term) for $N^K_{\mathrm{sd}}(\lambda)$ (Corollary \ref{lower cor}).
It should be possible to obtain this result also when $N$ is odd -- but extending this to the case when $K$ need not be maximal seems to be difficult.
One can try to study and estimate the twisted trace formula in a similar way as in the proof of Weyl's Law for $GL_N$ by Lapid-M\" uller \cite{lm}.
This is currently being pursued by Abhishek Parab under the supervision of Professor Shahidi.

\

Let us now turn to the existence of vectors fixed by open compact subgroups in the local setting. We need these results in the proof of a local result, Corollary \ref{depth cor}, which then implies a global statement, Proposition \ref{upper K fixed}.

Now take $F$ to be a $p$-adic field, $\mathbf G$ a connected
reductive group defined over $F$ and $\pi$ an irreducible admissible representation of $G=\mathbf G(F)$.
In the cases when $\mathbf G$ is $GL_N$ or a classical group $SO_N, Sp_{2n}$ or $SO^*_{2n}$, the local Langlands correspondence has been proved by
Harris--Taylor \cite{ht}, Henniart \cite{he}, and Arthur \cite{arthur} (in a slightly weaker form for even orthogonal groups).
Thus we can associate to $\pi$ a representation of the Weil-Deligne group $\varphi: W_F^\prime\rightarrow$ $^LG$.

Fix an open compact subgroup $K$ of $G$. How can we describe the existence of $K$-fixed vectors in $\pi$ in terms of properties of $\varphi$?
(This would enable us to see what happens to the fixed vectors under functoriality, as in Corollary \ref{depth cor}.)

For $\mathbf G=GL_N$ the answer is due to Jacquet, Piatetski-Shapiro, and Shalika \cite{jps}. They define a filtration of $GL_N(F)$ 
by certain ``mirabolic" subgroups
$M_j$ and show that when $\pi$ is generic, the smallest $j$ such that $\pi^{M_j}\neq\{0\}$ is the analytic conductor of $\pi$ given 
by the associated $\varepsilon$-factor.
The $L$-functions and $\varepsilon$-factors are preserved under local Langlands correspondence, and so this provides an answer to our question.

The case of a classical group $G$ is complicated by two facts: first of all, not every tempered representation is generic. But even more seriously, 
we do not have a suitable analogue of the mirabolic subgroups. A more convenient notion is that of depth, defined by Moy and Prasad \cite{mp1}, \cite{mp2} via filtration
by subgroups of $G$ attached to points in the Bruhat-Tits building. The definition itself is somewhat technical (see Section \ref{depth-backgr}), but the main properties of 
depth are that the depth of a representation $\pi$ is a non-negative rational number and that 
depth is preserved under subquotients of parabolic induction and the Jacquet functor. That is, if $M$ is a Levi subgroup of $G$ and 
$\sigma$ an irreducible admissible representation of $M$, then the depth of every irreducible subquotient of the normalized parabolic induction $\mathrm{Ind}_{MN}^{G} \sigma$ is equal
to the depth of $\sigma$ (and similarly for the Jacquet functor).

Finally, we expect to have depth preservation: define the depth of a Weil-Deligne 
representation $\varphi: W_F^\prime\rightarrow$ $^LG$ as $\inf\{r\ |\ \varphi(I_s)=1$ for all $s>r \}$, where $I_s$ is the filtration 
of the inertia subgroup $I$ of $W_F\subset W_F^\prime$. Then $\depth(\pi)$ is expected to be equal to $\depth(\varphi)$. J.-K. Yu \cite{yuottawa} has proved this 
for $GL_N$ and for tamely ramified tori; however, it is not true for example for the non-split
torus $SO_2^*(\mathbb Q_p)$ corresponding to a quadratic extension $F/\mathbb Q_p$ which is ramified at $p=2$ (see Corollary \ref{depth non-pres}).
The fact that depth need not be preserved when residual characteristic is small has already been observed by Gross-Reeder \cite{gross-reeder};
Reeder-Yu \cite{reeder-yu} recently provided more examples of this phenomenon.

We use the notion of depth to relate the existence of fixed vectors for a self-dual representation $\Pi$ of $GL_N(F)$ and its descent $\pi$ to
a classical group $G$. For that, we prove Proposition \ref{rs}, which provides the result needed in counting self-dual representations.

We also look at proving the depth preservation for classical groups. 
As we have noted before, 
it is not true in general for non-split even orthogonal groups, but we prove at least one inequality for generic representations in Theorem \ref{descent-bound}.
The proof is based on the  local descent construction of Jiang and Soudry \cite{js}. Since depth is preserved under parabolic induction,
we can assume that $\Pi$ and $\pi$ are supercuspidal, in which case the local descent gives an explicit way of constructing $\pi$ from $\Pi$
in terms of certain inductions and Jacquet quotients, which preserve depth. The obstruction to proving depth preservation
by this method is that one of the steps in the construction consists of taking the restriction of a representation to a subgroup, which may decrease the depth. See Section \ref{depth-descent} for the proof and details about local descent (in the case of $\mathbf G=SO_{2n+1}$).

The restriction that appears in the local descent to $SO_{2n+1}$ is that of a representation of $SO_{2n+2}(F)\times (GL_1(F))^{n-1}$ to the first component $SO_{2n+2}(F)$ and then to $SO_{2n+1}(F)\subset SO_{2n+2}(F)$.
In Section \ref{depth restr} we study a part of this problem, i.e., the restriction from $SO_{2n+2}(F)$ to $SO_{2n+1}(F)$
and show that depth does not decrease under it. To prove this, we use some results on irreducibility of parabolically induced representations which follow from the Langlands-Shahidi method and are stated in Section \ref{backgr induced}.
We hope to work on this problem further in the future and hopefully obtain full depth preservation for quasisplit classical groups at least in some cases.

\

Our method should work quite generally for obtaining upper bounds on the number of functorial lifts from $G(\mathbb {A_Q})$ to $GL_N(\mathbb {A_Q})$ whenever functoriality is known at least for generic representations. 
This is the case for lifts from unitary and general spin groups to $GL_N$ (\cite{mok}, \cite{ash}); 
the image of functoriality is formed by conjugate self-dual and essentially self-dual representations, respectively. 
As was the case in this thesis, proving lower bounds is much more delicate and requires more detailed information about the image of functoriality and the global packets. This seems to be available at least in the case of unitary groups.
We plan to undertake this research in more detail in the future.
It also seems possible to extend these results to other number fields besides $\mathbb Q$, with the caveat that Weyl's Law is not known in most of these cases.

\

One may wonder how our results on the number of self-dual automorphic representations relate to Rohrlich's results \cite{rohrlich} concerning Galois representations.
An automorphic representation attached to a Galois representation will have Laplacian eigenvalue 0; essentially, Rohrlich is considering the asymptotics when the Laplacian eigenvalue (which corresponds to the weight in classical setting) is fixed (and small) and the level of the compact subgroup goes to infinity. On the other hand, our results concern the asymptotics when the level is fixed and the Laplacian eigenvalue goes to infinity. This also explains the seeming disparity between our result that self-dual cusp forms on $GL_2$ have positive density, whereas 2-dimensional self-dual Galois representations have density zero by Rohrlich.

Unifying these results or obtaining asymptotical results on the automorphic side with fixed eigenvalue and increasing level, seems to be an interesting and hard problem.

%\
%
%Let us conclude the introduction by briefly commenting on the layout of the thesis. Chapter 2 contains some basic definitions (\ref{backgr basic}) 
%and background on functoriality (\ref{backgr funct}), Weyl's Law (\ref{backgr weyl}), and some results on irreducibility of induced representations (\ref{backgr induced}). 
%
%We discuss Laplacian eigenvalues in Chapter 3,
%starting with information on the Casimir element (\ref{lapl-casimir}) and
%infinitesimal characters (\ref{laplace infin}) and proving the results on
%behavior of Laplacian eigenvalues under functoriality in Section \ref{laplace proof}.

%
%  revised  introduction.tex  2011-09-02  Mark Senn  http://engineering.purdue.edu/~mark
%  created  introduction.tex  2002-06-03  Mark Senn  http://engineering.purdue.edu/~mark
%
%  This is the introduction chapter for a simple, example thesis.
%

\chapter{Background}

In this chapter we collect various background information on functoriality, results related to Weyl's Law, and some consequences of the Langlands-Shahidi method for irreducibility of induced representations that we will need (in Sections \ref{backgr funct}, \ref{backgr weyl}, and \ref{backgr induced}, respectively). 
Note that further background on Laplacian eigenvalues and depth is in their respective chapters.

\section{Basic Definitions and Notation}\label{backgr basic}

To fix the notation, let us review the definitions of
the following reductive algebraic groups defined over $\mathbb Q$ with which we shall be mostly dealing. 
Let $I_n$ be the $n\times n$ identity matrix and $J_n$ the $n\times n$ matrix with 1's along the second diagonal.

\

$GL_N=\{g| g$ is invertible$\}$

$SL_N=\{g| \det g=1\}$

$SO_{2n+1}=\{g\in SL_{2n+1}|gM ^tg=M\}$, where $M=\left( \begin{array}{ccc} 0 & 0 & I_n \\ 0 & 1 & 0 \\ I_n & 0 & 0 \end{array} \right)$

$Sp_{2n}=\{g\in SL_{2n}|gM ^tg=M\}$, where $M=\left( \begin{array}{cc} 0 & I_n \\ -I_n & 0 \end{array} \right)$

$SO_{2n}=\{g\in SL_{2n}|gM ^tg=M\}$, where $M=\left( \begin{array}{cc} 0 & I_n \\ I_n & 0 \end{array} \right)$

$SO^\ast_{2n}(F)=SO^{\ast, \tau}_{2n}(F)=\{g\in SL_{2n}(F)|gM ^tg=M\}$ for a field $F\supset \mathbb Q$ and $\tau\in F$ such that $-\tau$ is not a square in $F$, where $M=M_\tau=\left( \begin{array}{ccc} 0 & 0 & J_{n-1} \\ 0 & \Lambda_\tau & 0 \\ J_{n-1} & 0 & 0 \end{array} \right)$ and 
$\Lambda_\tau=\left(\begin{array}{cc} 1 & 0 \\ 0 & \tau \end{array} \right)$

The group $SO^{\ast, \tau}_{2n}(F)$ is attached to a quadratic extension $F(\sqrt{-\tau})/F$, to which class field theory associates a quadratic character $\eta_{F(\sqrt{-\tau})/F}$.

\

All of the groups above are split, except for $SO^{\ast, \tau}_{2n}$, which is quasisplit but not split (when $-\tau$ is not a square).

Throughout, by a $\textit{quasisplit classical group}$, we shall mean one of the groups $SO_{N},$ $SO^\ast_{2n},$ $Sp_{2n}$ defined above.

\

For a number field $F$, we denote by $\mathbb{A}_F$ the ring of adeles of F. When $F=\mathbb Q$, we often just write $\mathbb A$ for $\mathbb{A_Q}$. Denote the ring of integers of $\mathbb Q_p$ by $\mathcal O_p$.

\

We shall generally denote algebraic groups by boldface letters $\mathbf {G, H, M}$ and their $F$-points by $G, H, M$, etc.

Throughout the thesis, $\mathrm{Ind}_H^G \sigma$ denotes the non-normalized induction (unless explicitly stated otherwise). In Section \ref{backgr induced} we shall define the normalized (twisted) parabolic induction $I(\nu, \sigma)$.

\section{Langlands Functoriality and Reciprocity}\label{backgr funct}

Let us now review some theorems about Langlands correspondence and functoriality for $GL_N$ and classical groups.
We shall mostly follow \cite{arthur} and \cite{cps}.

Let $F$ be a local or global field of characteristic zero. Let $W_F$ be the Weil group, 
and when $F$ is non-archimedean, let $W_F^\prime=W_F\times SU(2)$ be the Weil-Deligne group. 
To provide a unified treatment of Langlands correspondence in the archimedean and $p$-adic cases, 
denote by $L_F$ the local Langlands group defined as $W_F$ when $F$ is archimedean and 
$W_F^\prime$ when $F$ is non-archimedean.

Let $\mathbf G$ be a connected reductive group over $F$. % and $G=\mathbf G(F)$. 
Let $G^\vee$ be the (complex) Langlands dual group and $^LG=G^\vee\rtimes W_F$ the $L$-group of $G$.

\

Let us first take $F$ to be local.
Denote by $\Phi(G)$ the set of $G^\vee$-orbits of semisimple continuous $L$-homomorphisms $\varphi: L_F\rightarrow$ $^LG$. 
Let $\Pi(G)$ be the set of equivalence classes of irreducible admissible representations $\pi$ of $G=\mathbf G(F)$.

The local Langlands correspondence then conjectures that there is a reciprocity map from $\Pi(G)$ to $\Phi(G)$ with various natural properties, e.g., that it preserves  $L$-functions. The fibers of the reciprocity map are called $L$-packets and are expected to be finite.

Suppose now that we have two reductive groups $\mathbf G_1$ and $\mathbf G_2$ and
a homomorphism $f:$ $^LG_1\rightarrow$ $^LG_2$.
Each (irreducible admissible) representation $\pi_1$ of $G_1$ is expected to correspond to some $\varphi: L_F\rightarrow$ $^L G_1$. Composing $\varphi$ with $f$ we obtain $f\circ\varphi: L_F\rightarrow$ $^L G_2$, to which should correspond a representation $\pi_2$ of $G_2$.
This gives us a conjectural
map from $\Pi(G_1)$ to $\Pi(G_2)$, referred to as local Langlands functoriality.

Local Langlands correspondence and functoriality have been proved by Langlands \cite{langl} when $F=\mathbb R$ for all groups. 
When $F$ is non-archimedean and $\mathbf G=GL_N$, reciprocity is due to Harris-Taylor \cite{ht} and Henniart \cite{he}.
Assuming the stabilization of the twisted trace formula for $GL_N$, Arthur \cite{arthur} has recently proved reciprocity for classical groups $\mathbf G$ 
(only up to outer conjugacy by $O_{2n}$ in the case of even orthogonal groups) and functoriality corresponding to natural (endoscopic) embeddings $^LG\hookrightarrow$ $^LGL_N$. We shall discuss his results in more detail at the end of this section.

\

Take now a global field $F$. It is not known what extension $L_F$ of the Weil group we need to take in order to expect Langlands reciprocity between representations $\varphi: L_F\rightarrow$ $^L G$ and automorphic representations of $\mathbf G(\mathbb A_F)$ to hold.
But we still can consider conjectural global functoriality between automorphic representations of $\mathbf G_1(\mathbb A_F)$ and $\mathbf G_2(\mathbb A_F)$ as above.

Functoriality from classical groups to $GL_N$ has been established by Cogdell, Kim, Piatetski-Shapiro, and Shahidi \cite{ckpss}, \cite{cps} for generic automorphic representations. A precise description of the image of functoriality is given by the descent method of Ginzburg, Rallis, and Soudry \cite{grs}:

\begin{theorem}\label{functoriality} (\cite{cps}, \cite{grs})
Let $\mathbf G$ be a quasisplit classical group with an embedding $^LG\hookrightarrow$ $^LGL_N$
and let $\chi_G$ and $R$ be as in Table \ref{table funct}.

Let $\pi$ be a globally generic cuspidal automorphic representation of $\mathbf G(\mathbb A_F)$. Then the functorial lift of $\pi$ to an automorphic representation $\Pi$ of $GL_N(\mathbb A_F)$ is self-dual with central character $\omega_\Pi=\chi_G$.
$\Pi$ is the normalized parabolic induction $\Pi=\mathrm{Ind}(\Pi_1\otimes\cdots\otimes\Pi_k)=\Pi_1\boxplus\cdots\boxplus\Pi_k$,
where each $\Pi_i$ is a self-dual cuspidal automorphic representation of $GL_{N_i}(\mathbb A_F)$ such that the partial $L$-function $L^T(s, \Pi_i, R)$
(with $T$ a sufficiently large set of places of $F$ containing all archimedean ones)
has a pole at $s=1$ and $\Pi_i\not\simeq\Pi_j$ for $i\neq j$. Moreover, every such $\Pi$ is a functorial lift of some $\pi$.
\end{theorem}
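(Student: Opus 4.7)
The plan is to prove the two parts of the theorem separately. For the forward direction, given a globally generic cuspidal $\pi$ on $\mathbf{G}(\mathbb{A}_F)$, I would construct the lift $\Pi$ on $GL_N$ via the converse theorem of Cogdell--Piatetski-Shapiro. For the reverse direction, starting from an isobaric $\Pi$ of the prescribed shape, I would recover a generic cuspidal preimage $\pi$ via the descent construction of Ginzburg--Rallis--Soudry.

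To carry out the forward direction, first I would use the Langlands--Shahidi method to attach local $L$- and $\varepsilon$-factors $L(s, \pi_v \times \tau_v)$ and $\varepsilon(s, \pi_v \times \tau_v, \psi_v)$ to each pair $(\pi_v, \tau_v)$, where $\tau$ is a cuspidal representation of $GL_n(\mathbb{A}_F)$ with $n \le N-1$; these factors appear as shadows of the appropriate Rankin--Selberg type integrals and constant terms of Eisenstein series on the ambient group in which $\mathbf{G} \times GL_n$ sits as a Levi. Second, assemble these into global $L$-functions and verify the crude functional equation, boundedness in vertical strips, and entirety after twisting by a highly ramified character, which together form the hypotheses of the converse theorem. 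Applying the converse theorem then produces an automorphic representation $\Pi$ of $GL_N(\mathbb{A}_F)$ matching $\pi$ at almost all places; the central character $\omega_\Pi = \chi_G$ is read off from the determinant of the embedding $\iota : {}^L G \hookrightarrow {}^L GL_N$, and self-duality $\Pi \simeq \widetilde{\Pi}$ follows because the image of $\iota$ preserves a nondegenerate bilinear form, so $\iota \circ \varphi$ is conjugate to its contragredient.

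For the structural statement, I would decompose $\Pi$ via the Langlands--Jacquet--Shalika classification: any automorphic representation of $GL_N$ is an isobaric sum $\Pi_1 \boxplus \cdots \boxplus \Pi_k$ of cuspidal constituents $\Pi_i$ on $GL_{N_i}$. Self-duality of $\Pi$ combined with strong multiplicity one forces the multiset $\{\Pi_i\}$ to be stable under $\Pi_i \mapsto \widetilde{\Pi_i}$. The heart of the argument is to show each $\Pi_i$ is itself self-dual (so no $\Pi_i \simeq \widetilde{\Pi_j}$ pairings with $i \ne j$) and that $L^T(s, \Pi_i, R)$ has a pole at $s=1$ with $R$ of the correct type ($\mathrm{Sym}^2$ or $\wedge^2$). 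This is achieved by analyzing the poles of partial symmetric and exterior square $L$-functions via Shahidi's theory: the existence of a generic cuspidal preimage on $\mathbf{G}$ is known to be equivalent to the appropriate pole at $s=1$, reflecting which classical form the dual group preserves.

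For the reverse direction, given $\Pi = \Pi_1 \boxplus \cdots \boxplus \Pi_k$ satisfying the hypotheses, I would apply the automorphic descent of Ginzburg--Rallis--Soudry: construct residual representations in the continuous spectrum of a larger general linear group, form a Fourier--Jacobi or Bessel coefficient against a suitable generic character of a unipotent radical, and restrict to the chosen classical group $\mathbf{G}$. One proves the resulting automorphic representation $\pi$ on $\mathbf{G}(\mathbb{A}_F)$ is nonzero, cuspidal, globally generic, and weakly lifts back to $\Pi$. The main obstacle, and the technical core of the \cite{grs} argument, is precisely this nonvanishing-and-cuspidality step, since it requires delicate unfolding of the descent integral and ruling out contributions from other orbits; the compatibility of descent with the condition that each $L^T(s,\Pi_i,R)$ has a pole at $s=1$ is what guarantees the descent lands on $\mathbf{G}$ rather than degenerating.
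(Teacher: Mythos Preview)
Your outline is a faithful sketch of the arguments in \cite{cps}, \cite{ckpss}, and \cite{grs}: the forward direction via Langlands--Shahidi $L$-functions feeding into the Cogdell--Piatetski-Shapiro converse theorem, the isobaric decomposition via Jacquet--Shalika, the identification of the type of $\Pi_i$ through poles of $\mathrm{Sym}^2$ or $\wedge^2$ $L$-functions, and the reverse direction via Ginzburg--Rallis--Soudry descent. As a summary of the literature this is accurate.

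However, there is nothing to compare against here: the paper does not prove this theorem. It is stated as a background result, attributed directly to \cite{cps} and \cite{grs}, and no argument is given in the thesis itself. The theorem serves purely as input for the later chapters on counting self-dual representations. So your proposal is not wrong, but it is answering a question the paper does not attempt to answer; in the thesis this statement is simply quoted, not proved.
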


\begin{table}[h]\caption{Functoriality}\label{table funct}
\begin{center}
\begin{tabular}{|c|c|c|c|c|}
\hline
$\mathbf G$ & $\mathbf G^\vee$ & $N$ & $R$ & $\chi_G$ \\
\hline
$Sp_{2n}$ & $SO_{2n+1}$ & $2n+1$ & $\mathrm{Sym}^2$ & $\mathbf 1$  \\
$SO_{2n+1}$ & $Sp_{2n}$ & $2n$ & $\bigwedge^2$ & $\mathbf 1$  \\
$SO_{2n}$ & $SO_{2n}$ & $2n$ & $\mathrm{Sym}^2$ & $\mathbf 1$  \\
$SO^\ast_{2n}$ & $SO_{2n}$ & $2n$ & $\mathrm{Sym}^2$ & $\eta_{F(\sqrt{-\tau})/F}$  \\
\hline
\end{tabular}
\end{center}
\end{table}

For all automorphic representations of classical groups, not only the generic ones, global functoriality was proved by Arthur \cite{arthur} (again assuming the stabilization of the twisted trace formula for $GL_N$ and only up to outer conjugacy by $O_{2n}$ for even orthogonal groups). 

Take $\mathbf G$ to be a quasisplit classical group and let us now briefly review some of Arthur's results which we shall use, starting again with the local case:

Let $\widetilde\Psi^+(G)$ the set of (not necessarily irreducible) $L$-homomorphisms $\psi: L_F\times SU(2)\rightarrow$ $^LG$,
taken up to conjugacy by $\mathrm{Out}(G^\vee)$. Arthur then defines subsets 
$\widetilde\Psi_{\mathrm{sim}}(G)\subset\widetilde\Psi_2(G)\subset\widetilde\Psi(G)\subset \widetilde\Psi_{\mathrm{unit}}^+(G)\subset\widetilde\Psi^+(G)$ and 
$\widetilde\Phi_{\mathrm{bdd}}(G)$. Let $\widetilde\Pi_{\mathrm{unit}}(G)$ be the set of %$\mathrm{\widetilde Out}_N(G)$-orbits 
of unitary irreducible admissible representations of $G$ (taken up to conjugacy by outer automorphisms of $G$) and $\widetilde\Pi_{\mathrm{temp}}(G)$ the set of those which are tempered. 
Finally, for $\psi\in \widetilde\Psi(G)$ we have the group $\mathcal S_\psi$. See Chapter 1 of \cite{arthur} for precise definitions and details.

We then have the local classification of Theorem 1.5.1 in \cite{arthur}.

\begin{theorem}\label{arthur 151}(\cite{arthur}, 1.5.1)
a) For every $\psi\in\widetilde\Psi(G)$, there is a finite subset $\widetilde{\Pi}_\psi$ of $\widetilde\Pi_{\mathrm{unit}}(G)$
equipped with a canonical mapping 
$$\pi\mapsto \langle\cdot, \pi\rangle, \ \ \ \ \pi\in\widetilde{\Pi}_\psi,$$
from $\widetilde{\Pi}_\psi$ to the group $\hat{\mathcal S}_\psi$ of characters on $\mathcal S_\psi$ such that
$\langle\cdot, \pi\rangle=1$ if $G$ and $\pi$ are unramified (relative to a suitable fixed compact open subgroup $K$).

b) If $\varphi=\psi$ belongs to the subset $\widetilde\Phi_{\mathrm{bdd}}(G)$ of parameters in $\widetilde\Psi(G)$ which are trivial on the factor $SU(2)$,
the elements in $\widetilde{\Pi}_\varphi$ are tempered and the corresponding mapping from $\widetilde{\Pi}_\phi$ to $\hat{\mathcal S}_\psi$ is injective.
Moreover, every element in $\widetilde\Pi_{\mathrm{temp}} (G)$ belongs to exactly one packet $\widetilde{\Pi}_\phi$. Finally, if $F$ is non-archimedean, 
the mapping from $\widetilde{\Pi}_\varphi$ to $\hat{\mathcal S}_\psi$ is bijective.
\end{theorem}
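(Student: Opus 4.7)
The plan is to prove Theorem \ref{arthur 151} by a simultaneous induction on the rank of $\mathbf G$, exploiting the twisted endoscopic comparison between $\mathbf G$ and $GL_N$ provided by the embedding $\iota: {}^LG \hookrightarrow {}^LGL_N$ of Table \ref{table funct}. Since for $F = \mathbb{R}$ the statement is essentially due to Shelstad/Langlands, the heart of the matter is the $p$-adic case. First I would reduce part (a) to part (b): given a general $\psi \in \widetilde\Psi(G)$ which is nontrivial on the Arthur $SU(2)$, the parameter factors through a proper Levi $^LM$, so the packet $\widetilde\Pi_\psi$ should be built from a tempered packet on $M$ by normalized parabolic induction together with Langlands/Aubert-dual type operations, with the pairing with $\mathcal S_\psi$ inherited from the pairing on $M$.

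For the tempered core (part (b)), the construction of $\widetilde\Pi_\varphi$ and the character $\langle\cdot,\pi\rangle$ is driven by endoscopic character relations. Composing $\varphi \in \widetilde\Phi_{\mathrm{bdd}}(G)$ with $\iota$ gives a tempered parameter of $GL_N$, whose Langlands correspondent $\Pi_\varphi$ is a well-defined irreducible representation by Harris--Taylor/Henniart. The plan is to define $\widetilde\Pi_\varphi$ as the set of tempered $\pi \in \widetilde\Pi_{\mathrm{temp}}(G)$ whose Harish-Chandra character, matched via the Kottwitz--Shelstad transfer to a twisted character on $GL_N$, contributes to the twisted character of $\Pi_\varphi$; and to define $\langle s,\pi\rangle$ for $s \in \mathcal S_\varphi$ by how $\pi$ enters the endoscopic transfer from the elliptic endoscopic datum determined by $s$.

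These local character identities cannot be proved purely locally, and the next step is to globalize. I would embed $F$ into a global field and $\varphi$ into a global parameter, controlling the ramification at auxiliary places via Henniart-type cuspidal approximation so that the packet at those places is already known by induction on rank or is unramified. Applying the stabilization of the twisted trace formula for $GL_N$ and comparing the spectral expansion with the stable trace formula for $\mathbf G$, one obtains a global identity between automorphic multiplicities and sums of local pairings; specializing by varying the test function at one place isolates the required local character identity, and orthogonality of characters then forces finiteness of $\widetilde\Pi_\varphi$, injectivity of $\pi \mapsto \langle\cdot,\pi\rangle$, the partition of $\widetilde\Pi_{\mathrm{temp}}(G)$ into packets, and, in the $p$-adic case, bijectivity onto $\hat{\mathcal S}_\varphi$ (using that for $p$-adic groups there are ``enough'' supercuspidals to saturate the character group).

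The main obstacle is the interlocking nature of the induction. The local theorem for $\mathbf G$ at a chosen place depends on the global stable trace formula for $\mathbf G$, which in turn depends on local character identities at \emph{all} other places and on endoscopic transfer from \emph{all} proper endoscopic subgroups of $\mathbf G$. Setting up a consistent induction that proves local and global classification in tandem, across a hierarchy indexed by rank and by the complexity of endoscopic data, is the hard part; it also crucially assumes the stabilization of the twisted trace formula for $GL_N$. A further technical nuisance is that for $\mathbf G = SO_{2n}$ the comparison only rigidifies $\pi$ up to $O_{2n}$-outer conjugacy, which forces every object in the statement to be taken modulo the outer automorphism and must be tracked coherently throughout the trace formula comparison.
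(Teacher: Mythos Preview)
The paper does not prove this theorem. It is stated with the citation ``(\cite{arthur}, 1.5.1)'' and no proof environment follows; the thesis simply quotes Arthur's local classification theorem as background input and then moves on to Proposition~\ref{arthur prop}. So there is nothing in the paper to compare your proposal against.

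Your sketch is, broadly speaking, a faithful high-level outline of Arthur's actual strategy in \cite{arthur}: the intertwined local--global induction on $N$, the construction of tempered packets via endoscopic character identities pinned down by the twisted trace formula comparison with $GL_N$, the globalization of local parameters, and the reduction of general $\psi$ to tempered packets on Levi subgroups. You also correctly flag the two genuine headaches (the interlocked induction and the $O_{2n}$-ambiguity for even orthogonal groups). But this is the content of an entire monograph, not something the thesis attempts; for the purposes of this paper the theorem is a black box, and the appropriate ``proof'' here is simply the citation.
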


To a parameter $\psi\in \widetilde\Psi^+(G)$ one can associate a parameter $\varphi_\psi\in\widetilde\Phi(G)$, given by
$$\varphi_\psi(w)=\psi(w, \left( \begin{array}{cc} |w|^{1/2} & 0 \\ 0 & |w|^{-1/2} \end{array} \right).$$

Then $\widetilde{\Pi}_{\varphi_\psi}\subset\widetilde{\Pi}_{\psi}$ and for $K$-spherical representations we have the following proposition:

\begin{proposition}\label{arthur prop}
Let $\mathbf G$ be a split orthogonal or symplectic group and $K$ a fixed maximal compact subgroup of $\mathbf G(F)$; if $F$ is non-archimedean we take $K=\mathbf G(\mathcal O_{F})$.

Let $\psi\in\widetilde{\Psi}^+_{\mathrm{unit}}(G)$ and assume that $\pi\in\widetilde{\Pi}_{\psi}$ has a non-zero $K$-fixed vector. 
Then $\pi\in\widetilde{\Pi}_{\varphi_\psi}$. Moreover, there is at most one such a representation $\pi$ in $\widetilde{\Pi}_{\psi}$.
\end{proposition}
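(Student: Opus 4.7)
The plan is to combine Arthur's Theorem \ref{arthur 151}(a) with the classical theory of spherical representations. The essential point is that the spherical element of an Arthur packet, when it exists, must coincide with the spherical Langlands quotient of the principal series attached to $\varphi_\psi$, which is unique.

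First I would reduce to the classical picture of spherical representations. In the non-archimedean case, with $K=\mathbf G(\mathcal O_F)$ hyperspecial maximal, the theorem of Borel and Casselman shows that any irreducible admissible $\pi$ with $\pi^K\neq 0$ is the unique irreducible spherical subquotient of an unramified principal series $\mathrm{Ind}_B^G(\chi)$ for some unramified character $\chi$ of a maximal torus $\mathbf T(F)$. In particular, the local Langlands parameter $\varphi_\pi$ of $\pi$ is unramified and is pinned down (up to $G^\vee$-conjugacy) by the Satake class $\chi(\mathrm{Frob})\in T^\vee/W$. The archimedean case is handled analogously via spherical principal series and the Langlands classification for real groups.

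To conclude $\pi\in\widetilde{\Pi}_{\varphi_\psi}$, I would invoke Theorem \ref{arthur 151}(a): the sphericity of $\pi$ forces the associated character $\langle\cdot,\pi\rangle$ on $\mathcal S_\psi$ to be trivial, so $\pi$ is singled out among the elements of $\widetilde{\Pi}_\psi$ as the distinguished unramified member. On the dual side, Arthur's construction of $\widetilde{\Pi}_\psi$ is compatible with the Satake correspondence on the unramified Hecke algebra, so the Satake class of the unramified constituent of $\widetilde{\Pi}_\psi$ equals the semisimple conjugacy class of $\varphi_\psi(\mathrm{Frob})$ in $G^\vee$. By uniqueness of Satake parameters this yields $\varphi_\pi=\varphi_\psi$, placing $\pi$ in the L-packet $\widetilde{\Pi}_{\varphi_\psi}$.

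Finally, uniqueness is immediate: if $\pi_1$ and $\pi_2$ are both $K$-spherical elements of $\widetilde{\Pi}_\psi$, then the argument above gives both the Langlands parameter $\varphi_\psi$; but any unramified Langlands parameter corresponds to a unique spherical representation, namely the spherical Langlands quotient of the associated principal series, so $\pi_1=\pi_2$. The main technical point to verify is the compatibility of Arthur's endoscopic construction of $\widetilde{\Pi}_\psi$ with the Satake correspondence -- this is implicit in Arthur's framework via the fundamental lemma and the normalizations used to pin down unramified transfer, but it is the step that would need to be stated and applied carefully, especially when $\varphi_\psi$ is non-tempered.
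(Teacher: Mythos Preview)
The paper does not actually supply a proof of this proposition: it is stated as a fact drawn from Arthur's framework, with only the remark ``Let us note that similar results were discussed by Shahidi \cite{shahidi-kyoto}.'' So there is no argument in the paper to compare your proposal against, and your sketch is already more detailed than what the paper offers.

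That said, your outline has two soft spots worth flagging. First, the proposition is stated for $\psi\in\widetilde\Psi^+_{\mathrm{unit}}(G)$, whereas Theorem~\ref{arthur 151}(a) is formulated only for $\psi\in\widetilde\Psi(G)$; the packets for parameters in $\widetilde\Psi^+_{\mathrm{unit}}(G)\setminus\widetilde\Psi(G)$ are defined in Arthur's book by parabolic induction and analytic continuation from the bounded case, and you would need to say why sphericity and the Satake compatibility survive this extension. Second, the clause $\langle\cdot,\pi\rangle=1$ in Theorem~\ref{arthur 151}(a) is asserted only under the hypothesis that $G$ and $\pi$ are \emph{unramified}, i.e., in the non-archimedean setting; your ``handled analogously'' for $F=\mathbb R$ is not backed by anything in the statement you cite, so the archimedean half needs a separate argument (e.g., via the Langlands classification and the explicit description of the archimedean packets). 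You correctly isolate the main technical input---compatibility of Arthur's packet construction with the Satake isomorphism---but as written this is closer to a statement of what must be true than a derivation of it; in Arthur's book this is tied to the fundamental lemma and the normalization of the unramified transfer, and it would be worth pointing to the precise place where it is established rather than leaving it implicit.
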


Let us note that similar results were discussed by Shahidi \cite{shahidi-kyoto}.

\

Let us now discuss the global results, and so assume that $F$ is global. We denote by $\mathcal A(G)$, $\mathcal A_{2}(G)$ and $A_{\mathrm{cusp}}(G)$ 
the sets of (unitary) automorphic representations of $\mathbf G(\mathbb{A}_F)$, respectively those that are in the discrete spectrum or cuspidal.

Let $\Psi_{\mathrm{sim}}(N)=\Psi_{\mathrm{sim}}(GL(N))$ be the set of formal tensor products $\psi=\mu\boxtimes\nu$, where $N=mn$, $\mu\in\mathcal A_{\mathrm{cusp}}(GL(m))$ and
$\nu$ is the unique irreducible representation of $SU(2)$ of degree $n$. The parameter $\psi$ corresponds to an isobaric sum
$$\Pi=\mu\left(\frac{n-1}2\right)\boxplus \mu\left(\frac{n-3}2\right)\boxplus\cdots\boxplus\mu\left(-\frac{n-1}2\right),$$
where $\mu(i): x\mapsto \mu(x)|\det x|^i$. By a Theorem of Moeglin-Waldspurger, the set $\Psi_{\mathrm{sim}}(N)$ parametrizes the discrete spectrum 
$\mathcal A_2(GL(N))$.

Let $\widetilde\Psi_{\mathrm{sim}}(N)$ be the subset of self-dual elements in $\Psi_{\mathrm{sim}}(N)$ and let $\widetilde\Psi_{2}(N)$ be the set of formal unordered sums
$\psi=\psi_1\boxplus\cdots\boxplus\psi_r$ for distinct elements $\psi_i\in \widetilde\Psi_{\mathrm{sim}}(N_r)$, where $N=N_1+\cdots+N_r$.
Elements of $\widetilde\Psi_{2}(N)$ thus correspond to isobaric sums $\Pi_1\boxplus\cdots\boxplus\Pi_r$ for $\Pi_i\in \mathcal A_2(GL(N_i))$.

Finally, let $\widetilde\Psi_{2}(G)$ be the set of those parameters $\psi\in\widetilde\Psi_{2}(N)$,
which are expected to be lifts of automorphic representations from $\mathbf G(\mathbb{A}_F)$ (see \cite{arthur}, Section 1.4 for a precise definition).

We have a localization mapping $\psi\mapsto\psi_v$ from $\widetilde\Psi_{2}(G)$ to 
$\widetilde{\Psi}_{\mathrm{unit}}^+(G_v)$ and so we can define the global packet 
$$\widetilde\Pi_\psi=\left\{
\bigotimes_v\pi_v | \pi_v\in\widetilde\Pi_{\psi_v}, \langle \cdot, \pi_v\rangle=1 \mathrm{\ for\ almost\ all\ } v
\right\}.$$

Let $\widetilde{\mathcal H}(G)$ be the product of the local symmetric Hecke algebras.

\begin{theorem}\label{arthur 152} (\cite{arthur}, 1.5.2)
There is an $\widetilde{\mathcal H}(G)$-module isomorphism
$$L^2_\mathrm{disc}\simeq \bigoplus_{\psi\in\widetilde\Psi_{2}(G)}\bigoplus_ {\pi\in\widetilde\Pi_{\psi}(\varepsilon_\psi)} m_\psi\pi, $$
where $m_\psi$ equals 1 or 2, $\varepsilon_\psi: \mathcal S_\psi\rightarrow \{\pm 1\}$
is an explicitly defined character and $\widetilde\Pi_{\psi}(\varepsilon_\psi)$ is the subset
of elements of $\widetilde\Pi_{\psi}$ such that $ \langle \cdot, \pi\rangle=\varepsilon_\psi$.
\end{theorem}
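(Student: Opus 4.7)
The plan is to prove Theorem~\ref{arthur 152} simultaneously with Theorem~\ref{arthur 151} by a long induction on $N$, following the standard comparison between the stable trace formula for $\mathbf G$ and the stabilization of the twisted trace formula for $GL_N$ (twisted by the standard outer involution $\theta$ whose fixed points recover the classical groups in question). First I would fix $\psi \in \widetilde\Psi_{2}(G)$; by the Mœglin--Waldspurger classification of $\mathcal A_{2}(GL_N)$ the parameter $\psi$ corresponds to a self-dual automorphic representation $\Pi$ of $GL_N(\mathbb A_F)$, and the embedding $^L G \hookrightarrow \ ^L GL_N$ in Table~\ref{table funct} singles out those $\Pi$ that should arise by functoriality from $\mathbf G$. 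The goal of the trace formula comparison is to exhibit the $\theta$-twisted discrete contribution of $\Pi$ on the $GL_N$ side as an endoscopic sum whose $\mathbf G$-summand is precisely the discrete contribution of the packet $\widetilde\Pi_\psi$.

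The plan is then to equate, for a sufficiently rich family of test functions $f \in \widetilde{\mathcal H}(G)$, the $\mathbf G$-discrete trace against the $\psi$-component of the twisted expansion of the $GL_N$ trace formula, using the transfer of orbital integrals together with the Fundamental Lemma (Ngô, Waldspurger) in its weighted and twisted forms. Combined with the local character identities of Theorem~\ref{arthur 151}, which characterize the packets $\widetilde\Pi_{\psi_v}$ and the pairings $\langle \cdot, \pi_v \rangle$, this would identify the multiplicity of $\pi = \bigotimes_v \pi_v$ in $L^2_{\mathrm{disc}}$ with
\[
m(\pi) \;=\; \frac{m_\psi}{|\mathcal S_\psi|} \sum_{x \in \mathcal S_\psi} \varepsilon_\psi(x)\, \langle x, \pi\rangle ,
\]
which by orthogonality of characters on the finite abelian group $\mathcal S_\psi$ equals $m_\psi$ precisely when $\langle \cdot, \pi\rangle = \varepsilon_\psi$ and vanishes otherwise; the factor $m_\psi \in \{1,2\}$ reflects whether, in the even orthogonal case, the outer automorphism of $\mathbf G$ stabilizes $\psi$.

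The main obstacle is essentially the content of Arthur's book. The induction requires a base at the level of simple parameters $\psi \in \widetilde\Psi_{\mathrm{sim}}(N) \cap \widetilde\Psi_{2}(G)$, for which the trace formula comparison is on its own inconclusive; one must supplement it with a separate argument (notably a sign lemma involving symplectic root numbers) to force the correct dichotomy between orthogonal and symplectic destinations of each cuspidal self-dual $\Pi$. The local packets in Theorem~\ref{arthur 151} have to be constructed in parallel with the global decomposition, since their canonical local character is only visible after one knows that the globally-defined pairings $\langle \cdot, \pi_v \rangle$ descend place by place and are independent of the ambient global realization. Finally, the entire argument is conditional on the stabilization of the twisted trace formula for $GL_N$, and the even orthogonal case must be carried out only up to outer $O_{2n}$-conjugacy, which introduces the subtleties in the sign character $\varepsilon_\psi$ and the multiplicity $m_\psi$ that distinguish $SO_{2n}$ and $SO^{*}_{2n}$ from the odd orthogonal and symplectic cases.
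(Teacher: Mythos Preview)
The paper does not prove this theorem at all: it is quoted verbatim as a background result from Arthur's book (Theorem~1.5.2 of \cite{arthur}) and is used as a black box in the later chapters. So there is nothing to compare your proposal against in the paper's own text.

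That said, what you have written is a fair high-level summary of Arthur's actual strategy in \cite{arthur}: the simultaneous induction with the local Theorem~\ref{arthur 151}, the comparison of the stable trace formula on $\mathbf G$ with the $\theta$-twisted trace formula on $GL_N$, the multiplicity formula via orthogonality on $\mathcal S_\psi$, the sign lemma for simple parameters, and the $O_{2n}$ ambiguity accounting for $m_\psi\in\{1,2\}$. For the purposes of this thesis, however, no such argument is required or appropriate; the correct ``proof'' here is simply the citation, and your outline should be understood as a description of what lies behind the reference rather than as something to be reproduced.
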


When $G$ is not an even orthogonal group, $m_\psi$ is always 1.

Finally, let us consider $K$-spherical representations in an A-packet $\widetilde\Pi_{\psi}$ 
for $K=K_\infty\times \prod K_p$ with $K_p=\mathbf G(\mathcal O_p)$ and a fixed maximal compact subgroup of $\mathbf G(\mathbb R)$ (to simplify notation a little, we take $\mathbf G$ split and $F=\mathbb Q$ here).

\begin{lemma}\label{arthur lemma}
Let $\mathbf G$ be a split classical group, $F=\mathbb Q$ and $\psi\in\widetilde{\Psi}_2(G)$. Then there is at most one representation
$\pi\in \widetilde\Pi_{\psi}$ such that $\pi^K\neq 0$
(where $K$ is the maximal compact subgroup as above).
\end{lemma}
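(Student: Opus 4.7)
The plan is to deduce this directly from the local statement, Proposition \ref{arthur prop}, applied place-by-place.

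First, recall that by the definition preceding the lemma, any element $\pi \in \widetilde\Pi_\psi$ is a restricted tensor product $\pi = \bigotimes_v \pi_v$ with $\pi_v \in \widetilde\Pi_{\psi_v}$ and $\langle \cdot, \pi_v\rangle = 1$ for almost all $v$. Since $K = K_\infty \times \prod_p K_p$ factors over places, the condition $\pi^K \neq 0$ is equivalent to $\pi_v^{K_v} \neq 0$ at every place $v$, where $K_p = \mathbf G(\mathcal O_p)$ and $K_\infty$ is the fixed maximal compact subgroup of $\mathbf G(\mathbb R)$.

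Now the hypotheses of Proposition \ref{arthur prop} are met locally: $\mathbf G$ is a split classical group (hence $\mathbf G_v$ is a split orthogonal or symplectic group at every $v$), the chosen $K_v$ agrees with the compact subgroup required by that proposition, and the localized parameter $\psi_v$ lies in $\widetilde\Psi^+_{\mathrm{unit}}(G_v)$ by the localization map recalled just before Theorem \ref{arthur 152}. Therefore, for each $v$, the local packet $\widetilde\Pi_{\psi_v}$ contains \emph{at most one} $K_v$-spherical representation.

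Consequently, if $\pi = \bigotimes_v \pi_v$ and $\pi' = \bigotimes_v \pi_v'$ are two $K$-spherical elements of $\widetilde\Pi_\psi$, then at every place $v$ both $\pi_v$ and $\pi_v'$ are $K_v$-spherical members of $\widetilde\Pi_{\psi_v}$, so $\pi_v \simeq \pi_v'$. Since a restricted tensor product is determined up to isomorphism by its local components, this gives $\pi \simeq \pi'$, proving uniqueness. The only genuine content is the local uniqueness supplied by Proposition \ref{arthur prop}; the globalization is automatic. In particular, there is no obstacle here beyond ensuring that each localized parameter falls within the scope of that proposition, which is built into Arthur's formalism.
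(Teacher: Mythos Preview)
Your proof is correct and follows exactly the same approach as the paper: factor $\pi$ and $K$ over places, apply Proposition \ref{arthur prop} at each place to get local uniqueness of the $K_v$-spherical member of $\widetilde\Pi_{\psi_v}$, and conclude global uniqueness from the tensor product structure. The paper's version is more terse but the argument is identical.
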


\begin{proof}
Let $\pi=\otimes \pi_v$. Each $\pi_v$ is $K_v$-spherical, and so by Proposition \ref{arthur prop}, there is at most one such $\pi_v$. Hence there is at most one global representation $\pi$.
\end{proof}

%For details, se Sections 1.4 and 1.5 of \cite{arthur} (and Theorems 1.5.1 and 1.5.2 in particular).

\

Note that we need general Langlands correspondence and functoriality from \cite{arthur} only in the last chapter where we prove a lower bound on the number of self-dual representations. In the rest of the thesis we could just work with the generic functoriality (which is available unconditionally). The analogue of Proposition \ref{rs} is in fact easier to prove only for generic representations than in general.

\section{Weyl's Law}\label{backgr weyl}

Let $\mathbf G$ be a reductive group over $\mathbb Q$. For a prime $p$, take a compact open subgroup $K_p$ of $\mathbf G(\mathbb Q_p)$,
also let $K_\infty$ be a maximal compact subgroup of $\mathbf G(\mathbb R)$.
This gives us a compact subgroup $K=K_\infty\times \prod K_p$ of $\mathbf G(\mathbb A)$. Finally, denote by $A_\mathbf G$ the maximal split torus in the center of $\mathbf G$ and let $A_\mathbb R=A_\mathbf G(\mathbb R)^0$ be the connected component of the $\mathbb R$-points of $A_\mathbf G$.

Weyl's Law conjecturally gives asymptotic results for the number of cuspidal automorphic representations of $\mathbf G(\mathbb A)$. 
For $\lambda>0$, define $N_G^K(\lambda)=\sum_{\lambda(\pi)\leq\lambda} \dim\pi^K$, where the sum is over
all cuspidal automorphic representations $\pi=\otimes\pi_v$ of $\mathbf G(\mathbb A)$ such that the
Laplacian eigenvalue $\lambda(\pi)$ of the restriction of $\pi_\infty$ to $\mathbf G^1(\mathbb R)$ is at most $\lambda$
(see Section \ref{laplace infin} for the definition of the Laplacian eigenvalue). 
Here $\mathbf G^1=\{g\in \mathbf G|\ |\chi(g)|=1$\ for all $\chi\in X^\ast(\mathbf G)\}$.

Weyl's Law then asserts that $N_G^K(\lambda)=c\lambda^{d/2}+o(\lambda^{d/2})$ as $\lambda\rightarrow\infty$ for
an explicit constant $c$, where $d$ is the dimension of the locally symmetric space 
$$X=X_G^K=A_{\mathbb R}\mathbf G(\mathbb Q)\backslash\mathbf G(\mathbb A)/K.$$

In this form, Weyl's Law was proved for $\mathbf G=GL_N$ by M\" uller \cite{m} and then by Lapid-M\" uller \cite{lm} with an improved error term:

\begin{theorem}\label{weyl law} \cite{m}
Let $\mathbf G=GL_N$, let $K_p=K_p(m)$ be the principal congruence subgroup of level $m$ (i.e., the subgroup consisting of matrices congruent to the identity modulo $p^m$) and $K_\infty=O(N)$.
Then
$$N_{GL(N)}^K(\lambda)=\frac{\mathrm{vol}(X)}{(4\pi)^{d/2}\mathbf{\Gamma}(d/2+1)}\lambda^{d/2}+o(\lambda^{d/2}),$$ where $\mathbf{\Gamma}$ is the standard gamma function.
\end{theorem}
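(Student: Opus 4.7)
The plan is to extract the cuspidal spectral asymptotics from the Arthur--Selberg trace formula, and then pass from a heat-kernel-type count to the counting function by a Tauberian argument. Throughout, set $d=\dim X$ and let $\Omega$ denote the Casimir of $GL_N^1(\mathbb{R})$.

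\emph{Step 1: Test function and the cuspidal trace.} I would take a test function of the form $\phi_t=\phi_{t,\infty}\otimes \mathbf{1}_K$, where $\mathbf{1}_K$ is the characteristic function of $K=K_\infty\times\prod K_p(m)$ (divided by $\mathrm{vol}(K)$), and $\phi_{t,\infty}$ is a bi-$K_\infty$-invariant convolution kernel whose spectral transform at a spherical parameter with Laplacian eigenvalue $\mu$ is $e^{-t\mu}$ (this may be realized, e.g., from the heat kernel on $X$ or from a smoothed finite-propagation approximation). The contribution of the cuspidal spectrum to $\operatorname{tr} R(\phi_t)$ is then exactly
\[
H(t)\;=\;\sum_{\pi\text{ cusp}}\dim(\pi^K)\,e^{-t\lambda(\pi)}.
\]

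\emph{Step 2: Leading term from the geometric side.} The identity orbital integral contributes $\operatorname{vol}(X)\cdot\phi_{t,\infty}(1)$. By the Minakshisundaram--Pleijel short-time expansion on the globally symmetric space $X$, one has $\phi_{t,\infty}(1)\sim(4\pi t)^{-d/2}$ as $t\to 0^+$. All non-identity unipotent and semisimple orbital integrals are of strictly smaller order as $t\to 0^+$ because they localize on lower-dimensional subvarieties. Thus the geometric side produces
\[
\operatorname{vol}(X)\,(4\pi t)^{-d/2}\,(1+o(1)),\qquad t\to 0^+.
\]

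\emph{Step 3: Subtract the non-cuspidal spectrum.} The full discrete spectrum of $GL_N$ decomposes into cuspidal plus residual (Speh) representations, and the continuous spectrum is an integral of Eisenstein series induced from cuspidal data on standard Levi subgroups $M=GL_{N_1}\times\cdots\times GL_{N_k}$ with $k\geq 2$. Proceeding by induction on $N$, each such $M$ has its own Weyl's Law with exponent $d_M<d$ (because the rank/dimension drops when we pass to a proper Levi), so the spectral contribution of Eisenstein series and of residuals is $O(t^{-(d-\delta)/2})$ for some $\delta>0$. This is the step where one truly needs Arthur's non-invariant trace formula: bounding the Eisenstein contribution requires uniform control of the archimedean intertwining operators $M(w,\sigma,\lambda)$ and their logarithmic derivatives along the unitary axis, which is exactly the analytic input supplied by M\"uller--Speh.

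\emph{Step 4: Tauberian conclusion.} Combining Steps 1--3 gives
\[
H(t)\;=\;\frac{\operatorname{vol}(X)}{(4\pi t)^{d/2}}\,(1+o(1)),\qquad t\to 0^+.
\]
Since $\dim(\pi^K)\geq 0$, Karamata's Tauberian theorem converts this Laplace asymptotic into the claimed counting asymptotic
\[
N_{GL(N)}^K(\lambda)\;=\;\frac{\operatorname{vol}(X)}{(4\pi)^{d/2}\,\mathbf{\Gamma}(d/2+1)}\,\lambda^{d/2}+o(\lambda^{d/2}).
\]

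The main obstacle is Step 3: isolating the cuspidal subspace from the rest of $L^2_{\mathrm{disc}}$ and, more seriously, from the continuous spectrum. The residual spectrum is manageable once one has Moeglin--Waldspurger's classification, but controlling the Eisenstein contribution demands non-trivial estimates on intertwining operators and is essentially the technical heart of M\"uller's proof; without it, one would only obtain an upper bound of the expected order, not the precise leading constant.
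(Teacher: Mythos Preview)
The paper does not prove this theorem at all: it is quoted as a background result with the citation \cite{m} (M\"uller) and no argument is given. So there is nothing in the paper to compare your proposal against.

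That said, your outline is a reasonable caricature of M\"uller's strategy, and it is honest about where the real work lies (Step~3). Two places where the sketch is too optimistic: in Step~2, the assertion that \emph{all} non-identity orbital integrals are of strictly smaller order as $t\to 0^+$ is not automatic---unipotent contributions in higher rank are genuinely delicate and require Arthur's truncation and weighted orbital integrals, not just a dimension count. In Step~3, the inductive claim that the Eisenstein contribution is $O(t^{-(d-\delta)/2})$ because proper Levis have smaller $d_M$ is not how the argument actually goes; the continuous spectrum is an integral over imaginary parameters, and bounding it requires the M\"uller--Speh estimates on $\|M(w,\sigma,\lambda)^{-1}M'(w,\sigma,\lambda)\|$ together with Arthur's expansion of the spectral side, not merely Weyl's Law on $M$. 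Your final paragraph correctly identifies this as the crux, but the body of Step~3 undersells it.
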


When $\mathbf G$ is a split adjoint semisimple group, Weyl's Law was proved by Linden\-strauss-Venkatesh \cite{lv}. 
It seems that a similar proof should work for a general split semisimple group, but
this has not appeared in literature yet. At the very least, one has weak Weyl's Law:

\begin{theorem}\label{weak weyl}
There are constants $c_1$ and $c_2$ such that for all sufficiently large $\lambda$,
one has 
$c_1\lambda^{d/2}<N_G^K(\lambda)<c_2\lambda^{d/2}$.
\end{theorem}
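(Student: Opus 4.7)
The plan is to prove the two inequalities separately, using very different methods. For the upper bound, the strategy is to compare $N_G^K(\lambda)$ with the counting function for the full $L^2$-discrete spectrum of the Laplacian on the locally symmetric space $X = X_G^K = A_{\mathbb R}\mathbf G(\mathbb Q)\backslash\mathbf G(\mathbb A)/K$. Each cuspidal automorphic representation $\pi$ with $\pi^K\neq 0$ contributes $\dim\pi^K$ linearly independent square-integrable eigenfunctions on $X$ with Laplacian eigenvalue $\lambda(\pi)$, so $N_G^K(\lambda)$ is bounded above by the total counting function $N_X(\lambda)$ of the discrete $L^2$-spectrum. Since $X$ is a complete Riemannian manifold of finite volume, Donnelly's theorem \cite{donnelly} provides the bound $N_X(\lambda)\leq c_2\lambda^{d/2}$, which gives the upper half of the weak Weyl's law.

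For the lower bound, the plan is to apply the Arthur--Selberg trace formula to a carefully chosen test function. Take a non-negative bi-$K$-invariant function $h_\lambda$ on $\mathbf G(\mathbb A)^1$ whose spherical Plancherel transform localizes on spherical representations whose Casimir eigenvalue lies in a dyadic window $[\lambda/2,\lambda]$. The spectral side then majorizes, up to a fixed constant, the difference $N_G^K(\lambda)-N_G^K(\lambda/2)$ from the cuspidal contribution; the geometric side is dominated by the identity orbital integral, which by a standard spherical Plancherel computation contributes a positive main term of order $\mathrm{vol}(X)\lambda^{d/2}$. Equating the two sides and summing the dyadic bounds then yields $N_G^K(\lambda)\geq c_1\lambda^{d/2}$ for all sufficiently large $\lambda$.

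The main obstacle is rigorous control of all the secondary contributions on both sides of the trace formula: on the geometric side, the non-identity orbital integrals attached to unipotent and non-central semisimple rational classes must be shown to grow strictly slower than $\lambda^{d/2}$; on the spectral side, the residual and continuous contributions, as well as any negative terms coming from Arthur's truncation, must be shown to be asymptotically negligible compared with the cuspidal main term. For split adjoint semisimple groups these estimates are established in full by Lindenstrauss--Venkatesh \cite{lv} and yield the sharp Weyl's law; their argument adapts here since the bounds depend only on $\mathrm{vol}(X)$, on the spectral gap of $L^2(X)$, and on polynomial volume growth of rational conjugacy classes, all of which are available uniformly for the reductive $\mathbf G$ we consider.

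As a simpler route that suffices for the weaker two-sided statement, one may instead combine Donnelly's upper bound for the residual spectrum with a lower bound on the entire discrete $L^2$-spectrum. The residual spectrum on $\mathbf G(\mathbb A)$ is parametrized via Langlands' theory by data on proper Levi subgroups, whose associated locally symmetric spaces have strictly smaller dimension $d'<d$; hence the residual counting function is $O(\lambda^{d'/2})$ and is absorbed by the error term. Thus a lower bound of order $\lambda^{d/2}$ for the discrete spectrum implies the same lower bound for $N_G^K(\lambda)$, completing the proof.
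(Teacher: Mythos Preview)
The paper does not actually prove Theorem~\ref{weak weyl}; it is stated as a background result and attributed to the literature. The text immediately following the theorem says that for split simply connected semisimple $\mathbf G$ it was proved by Labesse--M\"uller \cite{lab-m}, and the surrounding discussion points to Lindenstrauss--Venkatesh \cite{lv} for split adjoint groups and to Donnelly \cite{donnelly} (reformulated adelically as Proposition~\ref{adelic donnelly}) for the upper bound. In Chapter~6 the author even remarks that for the classical groups he needs, the lower bound ``seems not to have appeared in literature yet, but it seems to follow from other results towards Weyl's Law.'' So there is no in-paper proof to compare against; the result is used as a black box.

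Your sketch is broadly in line with what those cited papers do: the upper bound via Donnelly is exactly how the paper handles that direction, and the lower bound via a trace-formula argument with a well-chosen test function is the strategy of \cite{lab-m} and \cite{lv}. As an outline this is fine, but be aware that the sentence ``their argument adapts here since the bounds depend only on $\mathrm{vol}(X)$, on the spectral gap of $L^2(X)$, and on polynomial volume growth of rational conjugacy classes'' is doing a lot of work and is not something the paper claims to have checked; the author is explicitly hedging on precisely this point for classical groups.

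Your ``simpler route'' in the last paragraph has a genuine gap. You propose to deduce the cuspidal lower bound from a lower bound on the full discrete $L^2$-spectrum, after showing the residual contribution is $O(\lambda^{d'/2})$ with $d'<d$. But you never say how you obtain the lower bound on the full discrete spectrum in the first place. For a noncompact quotient this is essentially the same difficulty as the cuspidal lower bound: the presence of continuous spectrum means there is no a priori Weyl lower bound for the discrete part, and one is forced back to a trace-formula or pseudo-cuspform argument. So this alternative does not actually bypass the hard step; drop it or make explicit that it still rests on a trace-formula input.
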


When $\mathbf G$ is a split simply connected semisimple group, Theorem \ref{weak weyl} was proved by Labesse-M\" uller \cite{lab-m}.

%\

On a general group the full Weyl's Law hasn't been proved yet, but we have the following upper bound due to Donnelly \cite{donnelly}.

Let us first review the definition of an arithmetic subgroup: A subgroup $H$ of a Lie group $G$ is arithmetic if there exists $k\in\mathbb N$ and a faithful representation $\rho: G\rightarrow GL_k(\mathbb C)$ such that $\rho(H)$ is commensurable with $\rho(G)\cap GL_k(\mathbb Z)$. This condition does not depend on the choice of $k$ and $\rho$.

\begin{theorem}\cite{donnelly}\label{classical donnelly}
Let $G$ be a (non-compact) semisimple Lie group, 
$K$ a maximal compact subgroup of $G$ and $\Gamma$ an arithmetic subgroup of $G$. 
Let $N(\lambda)$ be the number of linearly independent Laplacian eigenfunctions
in $L^2_{cusp}(K\backslash G /\Gamma)$ with eigenvalue less than $\lambda$.
Then 
$$\limsup_{\lambda\rightarrow\infty} \frac{N(\lambda)}{\lambda^{d/2}}\leq
\frac{\mathrm{vol}(K\backslash G /\Gamma)}{(4\pi)^{d/2}\mathbf{\Gamma}(d/2+1)},$$

where $d$ is the dimension of $K\backslash G /\Gamma$.
\end{theorem}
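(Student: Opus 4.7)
The plan is to follow the classical heat kernel approach combined with a Karamata Tauberian argument. Introduce the cuspidal heat trace
$$\theta_{\mathrm{cusp}}(t)=\sum_j e^{-\lambda_j t},$$
where the sum runs over the Laplacian eigenvalues on $L^2_{\mathrm{cusp}}(K\backslash G/\Gamma)$ counted with multiplicity. By Karamata's Tauberian theorem applied to the spectral measure $\sum_j \delta_{\lambda_j}$, the bound $\limsup_{\lambda\to\infty} N(\lambda)/\lambda^{d/2}\leq c$ with $c=\mathrm{vol}(K\backslash G/\Gamma)/((4\pi)^{d/2}\mathbf{\Gamma}(d/2+1))$ is equivalent to the small-time estimate
$$\limsup_{t\to 0^+} t^{d/2}\,\theta_{\mathrm{cusp}}(t)\leq \frac{\mathrm{vol}(K\backslash G/\Gamma)}{(4\pi)^{d/2}},$$
so the task reduces to a sharp upper bound on the cuspidal heat trace as $t\to 0^+$.

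The next step is to majorise $\theta_{\mathrm{cusp}}(t)$ by a diagonal integral of the heat kernel. Write the cuspidal trace as the $L^2$-trace of $P_{\mathrm{cusp}}\circ e^{-t\Delta}$, where $P_{\mathrm{cusp}}$ is orthogonal projection onto the cuspidal subspace. Expressing both operators through the heat kernel $k_t(x,y)$ on $K\backslash G/\Gamma$, positivity together with the fact that $P_{\mathrm{cusp}}$ is a contraction gives
$$\theta_{\mathrm{cusp}}(t)\leq \int_{\mathcal F} k_t(x,x)\,dx$$
whenever this integral converges, with $\mathcal F$ a fundamental domain for $\Gamma$. On an arithmetic quotient the right-hand side diverges because of cusps, so I would truncate $\mathcal F=\mathcal F_T\cup \mathcal C_T$ into a compact core and a union of Siegel sets at height $\geq T$ attached to the $\mathbb Q$-rational cusps. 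Over $\mathcal F_T$ the local Minakshisundaram--Pleijel expansion of the heat kernel on the symmetric space $K\backslash G$ gives
$$\int_{\mathcal F_T} k_t(x,x)\,dx=\frac{\mathrm{vol}(\mathcal F_T)}{(4\pi t)^{d/2}}\,(1+o(1))$$
as $t\to 0^+$, uniformly in $T$.

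The main obstacle is controlling the contribution of the cusps $\mathcal C_T$ to $\theta_{\mathrm{cusp}}(t)$, since $\int_{\mathcal C_T} k_t(x,x)\,dx$ is typically infinite. The key idea is to exploit the defining property of cusp forms: their constant terms along every proper $\mathbb Q$-rational parabolic subgroup vanish. This allows one to subtract the Eisenstein-type contributions from $k_t$ and replace it on $\mathcal C_T$ by a modified kernel $k_t^{\#}$ which agrees with $k_t$ on cusp forms and decays exponentially in the Siegel set height, by averaging $k_t(x,y)$ against the cuspidal projection in the unipotent radical directions. The resulting remainder is exponentially small in $T$ and uniform on small intervals of $t$. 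Letting $t\to 0^+$ first and then $T\to\infty$ yields
$$\limsup_{t\to 0^+}\, t^{d/2}\,\theta_{\mathrm{cusp}}(t)\leq \frac{\mathrm{vol}(\mathcal F_T)}{(4\pi)^{d/2}}+\varepsilon(T),$$
with $\varepsilon(T)\to 0$ as $T\to\infty$ and $\mathrm{vol}(\mathcal F_T)\to\mathrm{vol}(K\backslash G/\Gamma)$. Karamata then delivers the claimed upper bound on $N(\lambda)$. The hardest technical point is making the cusp truncation uniform in $t$ as $t\to 0^+$, which is where the arithmetic hypothesis on $\Gamma$ enters essentially through the reduction theory of $\mathbb Q$-parabolic subgroups organising the cusps of $\mathcal F$.
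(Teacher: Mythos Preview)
The paper does not prove this theorem at all: it is quoted verbatim as a result of Donnelly \cite{donnelly} and is used as a black box to derive the adelic Proposition~\ref{adelic donnelly}. So there is no ``paper's own proof'' to compare your proposal against.

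That said, your sketch is broadly in the spirit of Donnelly's actual argument, which is indeed a heat-kernel estimate on the locally symmetric space combined with a Tauberian step. A couple of points deserve care if you want to turn this into a rigorous proof. First, your appeal to Karamata is not quite right as stated: Karamata's theorem concerns genuine limits, not $\limsup$'s, and the implication you need (from $\limsup_{t\to 0^+} t^{d/2}\theta_{\mathrm{cusp}}(t)\le C$ to $\limsup_{\lambda\to\infty} N(\lambda)/\lambda^{d/2}\le C/\mathbf{\Gamma}(d/2+1)$) is the Tauberian direction, which in general fails for one-sided bounds without extra hypotheses. One has to argue more directly, e.g.\ via a suitable test-function or minorant argument, to extract the sharp constant from an upper heat-trace bound. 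Second, the inequality $\theta_{\mathrm{cusp}}(t)\le \int_{\mathcal F} k_t(x,x)\,dx$ that you write down is exactly the step that fails on a noncompact quotient, as you yourself note; the substance of Donnelly's paper is precisely the construction and uniform control of the truncated/modified kernel near the cusps, and your paragraph about $k_t^{\#}$ gestures at this but does not supply the mechanism. In short: right strategy, but the two places where real work is needed are glossed over rather than carried out.
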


The dimensions of the symmetric spaces are given in Table \ref{table dimensions}.

\begin{table}[h]\caption{Dimensions}\label{table dimensions}
\begin{center}
\begin{tabular}{|c|c|c|c|}
\hline
$G$ & $\dim G$ & $\dim K$ & $d_G=\dim X_G$ \\
\hline
%$SL_N(\mathbb R)$ & $N^2-1$ & $\frac{N(N-1)}{2}$ & $\frac{N^2+N-2}{2}$  \\
$SL_{2n+1}(\mathbb R)$ & $4n^2+4n$ & $2n^2+n$ & $2n^2+3n$  \\
$SL_{2n}(\mathbb R)$ & $4n^2-1$ & $2n^2-n$ & $2n^2+n-1$ \\
$Sp_{2n}(\mathbb R)$ & $2n^2+n$ & $n^2$ & $n^2+n$  \\
$SO_{2n+1}(\mathbb R)$ & $2n^2+n$ & $n^2$ & $n^2+n$  \\
$SO_{2n}(\mathbb R)$ & $2n^2-n$ & $n^2-n$ & $n^2$  \\
$SO^\ast_{2n}(\mathbb R)$ & $2n^2-n$ & $n^2$ & $n^2-n$  \\
\hline
\end{tabular}
\end{center}
\end{table}

We shall need an adelic reformulation of Donnelly's result:

\begin{proposition}\label{adelic donnelly}
Let $\mathbf G$ be a connected reductive group such that $\mathbf G^1(\mathbb R)$ is not compact. 
Let $K$ be an arithmetic subgroup of $\mathbf G(\mathbb A)$ and 
$X=A_{\mathbb R}\mathbf G(\mathbb Q)\backslash\mathbf G(\mathbb A)/K$.

Then for $\lambda$ large enough we have 
$$N_G^K(\lambda)\leq
\frac{\mathrm{vol}(X)}{(4\pi)^{d/2}\mathbf{\Gamma}(d/2+1)}\lambda^{d/2},$$
where $d$ is the dimension of $X$.
\end{proposition}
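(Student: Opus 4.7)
The plan is to deduce Proposition \ref{adelic donnelly} from the classical Theorem \ref{classical donnelly} by decomposing the adelic double coset space $X$ into a finite disjoint union of classical arithmetic quotients, transferring the cuspidal $K$-spherical spectrum accordingly, and then applying Donnelly's bound on each component.

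First I would write $K = K_\infty \times K_f$ with $K_f = \prod_p K_p \subset \mathbf G(\mathbb A_f)$. Since $K_f$ is open compact, the class number of $\mathbf G$ with respect to $K_f$ is finite, giving a decomposition
$$\mathbf G(\mathbb A) = \bigsqcup_{i=1}^{h} \mathbf G(\mathbb Q)\, g_i\, \bigl(\mathbf G(\mathbb R) K_f\bigr), \qquad g_i \in \mathbf G(\mathbb A_f).$$
Setting $\Gamma_i = \mathbf G(\mathbb Q) \cap g_i K_f g_i^{-1}$, each $\Gamma_i$ is an arithmetic subgroup of $\mathbf G(\mathbb R)$ in the sense recalled just before Theorem \ref{classical donnelly}, and the map $g_\infty \mapsto g_i g_\infty$ induces
$$X \;\cong\; \bigsqcup_{i=1}^{h}\, A_{\mathbb R}\,\Gamma_i \backslash \mathbf G(\mathbb R) / K_\infty \;\cong\; \bigsqcup_{i=1}^{h}\, \Gamma_i \backslash \mathbf G^1(\mathbb R) / K_\infty^1,$$
where $K_\infty^1 = K_\infty \cap \mathbf G^1(\mathbb R)$. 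The resulting locally symmetric spaces $X_i = \Gamma_i \backslash \mathbf G^1(\mathbb R)/K_\infty^1$ have total volume $\mathrm{vol}(X) = \sum_i \mathrm{vol}(X_i)$ for compatible normalizations of Haar measure.

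Next I would match the spectral problems. Using the above decomposition, right-$K$-invariant cuspidal functions on $A_{\mathbb R}\mathbf G(\mathbb Q)\backslash \mathbf G(\mathbb A)$ correspond bijectively to $h$-tuples of $K_\infty^1$-invariant cuspidal functions on $\Gamma_i \backslash \mathbf G^1(\mathbb R)$. By the definition of $\lambda(\pi)$ recalled in the paragraph defining $N_G^K(\lambda)$ (through the Casimir acting on the restriction of $\pi_\infty$ to $\mathbf G^1(\mathbb R)$), and because the Casimir of $\mathbf G^1(\mathbb R)$ acts on $K_\infty^1$-invariant functions as the Laplace--Beltrami operator on $X_i$ up to sign, we obtain
$$N_G^K(\lambda) \;=\; \sum_{i=1}^{h} N_i(\lambda),$$
where $N_i(\lambda)$ counts (with multiplicity) cuspidal Laplacian eigenfunctions on $X_i$ with eigenvalue $\le \lambda$, exactly as in Theorem \ref{classical donnelly}.

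Finally, since $\mathbf G^1(\mathbb R)$ is non-compact by hypothesis and is semisimple up to a compact central factor (which does not affect the symmetric space $X_i$), Donnelly's theorem applies to each $\Gamma_i \backslash \mathbf G^1(\mathbb R)$ and gives
$$\limsup_{\lambda\to\infty} \frac{N_i(\lambda)}{\lambda^{d/2}} \;\le\; \frac{\mathrm{vol}(X_i)}{(4\pi)^{d/2}\mathbf{\Gamma}(d/2+1)}.$$
Summing over $i=1,\dots,h$ and using $\mathrm{vol}(X) = \sum_i \mathrm{vol}(X_i)$ produces the desired bound; the formulation ``for $\lambda$ large enough'' absorbs the usual $(1+\varepsilon)$ slack from a $\limsup$ statement. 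The main technical obstacle is the careful bookkeeping in Step 2: verifying that the Haar/Riemannian measures on $\mathbf G(\mathbb A)/K$ and on the classical quotients $X_i$ are compatibly normalized, and that the adelic Casimir eigenvalue of $\pi_\infty|_{\mathbf G^1(\mathbb R)}$ agrees (with correct sign and normalization) with the Laplace--Beltrami eigenvalue of the corresponding classical cusp form, so that the two counting functions coincide and not merely differ by a rescaling.
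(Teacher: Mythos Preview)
Your proposal is correct and follows essentially the same approach as the paper: decompose the adelic quotient via the finite class set into a disjoint union of classical arithmetic quotients $\Gamma_i\backslash \mathbf G^1(\mathbb R)/K_\infty$, identify $N_G^K(\lambda)=\sum_i N_i(\lambda)$, and apply Donnelly's Theorem~\ref{classical donnelly} componentwise. The paper's proof is the same outline (citing \cite{lm}, p.~134, for the decomposition) and is in fact less explicit than yours about the volume additivity and the passage from semisimple to $\mathbf G^1$.
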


Here we extend the definition of an arithmetic subgroup to the adelic setting as follows:
Let $K_f=\prod_{p<\infty} K_p$.
By the strong approximation theorem we have that $A_{\mathbb R}\mathbf G(\mathbb Q)\backslash\mathbf G(\mathbb A)/\mathbf G(\mathbb R)K_f$ is finite.
Denote by $x_1=1, x_2, \dots, x_l$ the double coset representatives in $\mathbf G(\mathbb A_f)$
and let $\Gamma_i=(\mathbf G(\mathbb Q)\cdot x_iK_fx_i^{-1})\cap \mathbf G^1(\mathbb R)$.
We define $K$ to be arithmetic if each of these $\Gamma_i$ is an arithmetic subgroup of $\mathbf G(\mathbb R)$ (as defined before Theorem \ref{classical donnelly}).

\begin{proof}
We argue similarly as on page 134 of \cite{lm}.
Let $\Gamma_i$ be as above. Since $K$ is arithmetic, all of these $\Gamma_i$ are arithmetic subgroups of $\mathbf G(\mathbb R)$ by definition.

We obtain an isomorphism of $\mathbf G(\mathbb R)$-spaces
$$A_{\mathbb R}\mathbf G(\mathbb Q)\backslash\mathbf G(\mathbb A)/K_\infty K_f
\simeq\bigsqcup_i\Gamma_i\backslash \mathbf G^1(\mathbb R)/K_\infty.$$

Automorphic representations correspond to automorphic forms on $\Gamma_i\backslash \mathbf G^1(\mathbb R)/K_\infty$; 
denote by $N_i(\lambda)$ the number of linearly independent Laplacian eigenfunctions
in the space $L^2_{cusp}(\Gamma_i\backslash \mathbf G^1(\mathbb R)/K_\infty)$ with eigenvalue less than $\lambda$. 
Then $N_G^K(\lambda)=\sum_i N_i(\lambda)$.

Using Donnelly's upper bound for each $N_i(\lambda)$, we obtain the desired estimate for $N_G^K(\lambda)$.
\end{proof}

Note that the assumption that $\mathbf G^1(\mathbb R)$ is not compact is satisfied for
all of the groups with which we shall be dealing, except for non-split $SO_2^\ast$.

\section{$L$-Functions and Irreducibility of Induced Representations}\label{backgr induced}

In this Section we review some results concerning irreducibility of parabolically induced representations, which follow from the Langlands-Shahidi method. We shall use these results in Section \ref{depth restr}. Our main reference is \cite{sh}.

Let $F$ be a non-archimedean field of characteristic 0, $\mathbf G$ a quasisplit connected reductive group over $F$ and $G=\mathbf G(F)$.

The theory works more generally as well, but for our purposes, let us assume that $P=MN$ is a standard maximal parabolic subgroup of $G$. Let $A$ be the split component of $M$, $\liea^\ast=X^\ast(M)\otimes \mathbb R$ and $\liea=\mathrm{Hom} (X^\ast(M), \mathbb R)$. 
Define a map $H_M: M\rightarrow\liea$ by $\exp\langle\chi, H_M(m)\rangle=|\chi(m)|_F$.
Denote by $\rho_P$ half of the sum of roots in $\mathrm {Lie}(N)$ and by $\alpha$ the unique simple root of $A$ in $\mathrm {Lie}(N)$. Finally, set 
$\tilde{\alpha}=\langle\rho_P, \alpha \rangle^{-1}\rho_P\in\liea^\ast$.

Let $\sigma$ be an irreducible unitary generic (admissible) representation of $M$. 
For $\nu\in\lieac^\ast$ denote by $I(\nu, \sigma)$ the induced representation 
$\mathrm {Ind}_P^G\ \sigma\otimes\exp\langle\nu+\rho_P, H_M(\cdot) \rangle\otimes\mathbf 1$.

To the parabolic $P$ we can associate representations $r_i$ with highest weight vectors given by the diagrams starting on page 183 of \cite{sh}.
Using the Langlands-Shahidi method, one can then define the local coefficients $C_\chi(s\tilde{\alpha}, \sigma, w_0)$ and $L$-functions $L(s, \sigma, r_i)$.

We have the following theorem:

\begin{theorem}\label{backgr shahidi} (\cite{sh81}, 3.3.1 and \cite{sh}, 8.4.9, 8.5.1)
Assume that $\sigma$ is an irreducible unitary generic supercuspidal representation of $M$. Then
$$C_\chi(s\tilde{\alpha}, \sigma, w_0)\sim 
L(1-s, \tilde{\sigma}, r_1)L(1-2s, \tilde{\sigma}, r_2)/
L(s, {\sigma}, r_1) L(2s, {\sigma}, r_2),$$

where $\sim$ means equality up to a monomial in $q^{-s}$.
The following are equivalent:

a) $L(s, {\sigma}, r_i)$ has a pole at $s=0$ for $i=1$ or 2 and only for one of them.

b) $I(0, \sigma)$ is irreducible and $\sigma$ is ramified, i.e., $w_0(\sigma)\simeq\sigma$.

\

Moreover, assume that these conditions hold and let $i=1$ or 2 be such that $L(s, {\sigma}, r_i)$ has a pole at $s=0$. Then for $s>0$, we have that $I(s\tilde{\alpha}, \sigma)$ is reducible only when $s=1/i$, for all other $s$ it is irreducible.

\

If $\sigma$ is ramified and $I(0, \sigma)$ is reducible, then all $I(s\tilde{\alpha}, \sigma)$ with $s>0$ are irreducible.

If $\sigma$ is unramified, then all $I(s\tilde{\alpha}, \sigma)$ with $s\geq 0$ 
are irreducible.
\end{theorem}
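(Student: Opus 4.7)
The plan is to follow the standard Langlands-Shahidi method via analysis of the normalized intertwining operator $A(s\tilde\alpha, \sigma, w_0): I(s\tilde\alpha, \sigma) \to I(-s\tilde\alpha, w_0\sigma)$ defined through a Whittaker functional. First, I would establish the displayed formula for $C_\chi(s\tilde\alpha, \sigma, w_0)$. Decompose the adjoint action of $\,^LM$ on $\,^L\mathfrak n$ into irreducibles $r_1$ (and possibly $r_2$), which by hypothesis exhaust the pieces. By multiplicativity of the local coefficient under rank-one reductions, $C_\chi$ factors into contributions indexed by the $r_i$; by the very definition of the Langlands-Shahidi $\gamma$-factors $\gamma(s, \sigma, r_i, \psi)$ via these local coefficients, and then of $L(s,\sigma,r_i)$ and $\varepsilon(s,\sigma,r_i,\psi)$, the formula displayed in the theorem follows up to a monomial in $q^{-s}$ coming from the $\varepsilon$-factors.

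Next, for the equivalence of (a) and (b), I would analyze $A(0, \sigma, w_0)$ in the ramified case $w_0\sigma \simeq \sigma$. Since $A(s, \sigma, w_0)A(-s, w_0\sigma, w_0)$ equals a scalar built out of $L$-functions (by the cocycle identity and the local coefficient formula), one sees $A(0,\sigma,w_0)^2 \in \mathbb{C}^\times \cdot \mathrm{Id}$, so the operator has at most two eigenvalues and $I(0,\sigma)$ is reducible precisely when $A(0,\sigma,w_0) = \pm \mathrm{Id}$. Combining with the pole-free behavior of $L(s,\sigma,r_i)$ for $s>0$ when $\sigma$ is supercuspidal, and tracking which of the factors in the local coefficient formula contributes the relevant pole or zero at $s=0$, one deduces that reducibility of $I(0,\sigma)$ is equivalent to exactly one of the two $L$-functions having a pole at $s=0$.

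To locate reducibility for $s>0$, I would invoke Harish-Chandra's criterion: for tempered $\sigma$ supercuspidal, $I(s\tilde\alpha,\sigma)$ is reducible iff the Plancherel measure $\mu(s\tilde\alpha,\sigma)$ has a zero there. Since $\mu$ is proportional to $|C_\chi(s\tilde\alpha,\sigma,w_0)|^{-2}$, this means reducibility corresponds to a pole of the \emph{denominator} $L(s,\sigma,r_1) L(2s,\sigma,r_2)$ in the formula of step one. For supercuspidal $\sigma$ the $L$-functions $L(s,\sigma,r_i)$ have at most a pole at $s=0$, so the only candidate for a pole on $s>0$ is $s = 1/i$ where $i$ is the index selected in (a); the numerator's behavior on $s>0$ combined with condition (a) prevents cancellation. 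When $I(0,\sigma)$ is already reducible (so the $L$-functions contribute no pole at $s=0$ on either side), no further reducibility occurs for $s>0$. When $\sigma$ is unramified ($w_0\sigma \not\simeq \sigma$), $A(s\tilde\alpha,\sigma,w_0)$ is an isomorphism between inequivalent induced representations (by Bruhat's irreducibility result for induction from supercuspidals with distinct central characters under $w_0$), giving irreducibility for all $s\ge 0$.

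The hard part will be step two: pinning down the precise sign of $A(0,\sigma,w_0)$ and correlating it with which of $L(s,\sigma,r_1)$ or $L(2s,\sigma,r_2)$ is responsible for the pole. This requires a delicate normalization argument — matching the functional equation of the intertwining operator against the explicit factorization of $C_\chi$ in a way that distinguishes the $r_1$ case from the $r_2$ case — and also ruling out simultaneous cancellation between numerator and denominator. The remaining subtle point is ensuring, in the reducibility analysis for $s>0$, that residues of the $L$-functions really do produce honest poles of the Plancherel measure rather than being offset by zeros coming from the numerator; this is where the supercuspidality of $\sigma$ (forcing the $L$-functions to be as simple as possible) becomes essential.
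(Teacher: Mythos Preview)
The paper does not prove this theorem; it is quoted as background from Shahidi's work (the references \cite{sh81} and \cite{sh}), so there is no in-paper proof to compare against. Your outline follows the standard Langlands--Shahidi route that those references take, and the overall architecture (local coefficient factorization, intertwining operator at $s=0$, Plancherel/$\mu$-function analysis for $s>0$) is correct.

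However, there is a genuine error in your step two. You write that ``$I(0,\sigma)$ is reducible precisely when $A(0,\sigma,w_0) = \pm \mathrm{Id}$'' and then conclude that ``reducibility of $I(0,\sigma)$ is equivalent to exactly one of the two $L$-functions having a pole at $s=0$.'' Both assertions are backwards. If $A(0,\sigma,w_0)^2$ is a nonzero scalar, then $A(0,\sigma,w_0)$ being a scalar is exactly the case in which it \emph{fails} to split the induced space, i.e.\ $I(0,\sigma)$ is \emph{irreducible}; reducibility occurs when the two eigenvalues are distinct. Correspondingly, condition (b) in the theorem is that $I(0,\sigma)$ is \emph{irreducible} (and $\sigma$ ramified), not reducible. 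With your sign flipped, the subsequent bookkeeping (which $L$-factor carries the pole, and why the reducible-at-zero case forces irreducibility for $s>0$) comes out inconsistent with the statement you are trying to prove.

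A second, smaller issue: in step three you invoke ``$I(s\tilde\alpha,\sigma)$ is reducible iff the Plancherel measure $\mu(s\tilde\alpha,\sigma)$ has a zero there.'' For $s>0$ real and $\sigma$ unitary supercuspidal with $w_0\sigma\simeq\sigma$, the reducibility point is detected by a \emph{pole} of $\mu$, not a zero (the zero of $\mu$ sits at $s=0$ precisely in the irreducible-at-zero case). Once you correct this, the pole of $\mu$ at $s=1/i$ comes from the pole of $L(1-is,\tilde\sigma,r_i)$ at $s=1/i$ in the numerator of $C_\chi$, matching the conclusion of the theorem. After fixing these two reversals your sketch lines up with Shahidi's argument.
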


Let us now apply this theorem to prove a lemma which we shall need in Section \ref{depth restr}.

\begin{lemma}\label{lemma induction}
Let $\mathbf G=SO_{2n+1}$ and $P=MN$ be standard maximal parabolic with Levi $\mathbf M=GL_1\times SO_{2n-1}$. Let $\sigma$ be an irreducible unitary generic supercuspidal representation of $SO_{2n-1}(F)$ and $\chi$ a unitary character of $F^\times$ such that $\chi^2\neq \mathbf 1$.
Then the induced representation $\mathrm{Ind}_P^G\ \sigma\otimes\chi$ is irreducible.
\end{lemma}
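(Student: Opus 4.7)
The plan is to apply Theorem \ref{backgr shahidi} with the inducing representation $\tau=\sigma\otimes\chi$ of the Levi $M=GL_1\times SO_{2n-1}$, aiming at the final clause of the theorem that handles the case when $\tau$ is ``unramified'' in the sense $w_0\tau\not\simeq\tau$. First observe that $\tau$ is irreducible, unitary, generic, and supercuspidal, inheriting each of these properties factor-by-factor from $\sigma$ (satisfied by hypothesis) and from the unitary character $\chi$ of $GL_1(F)=F^\times$.

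Next I would compute the action of the unique non-trivial element $w_0$ of the rank-one relative Weyl group $W(A)=N_G(A)/Z_G(A)$ on $\tau$. A representative of $w_0$ in $N_G(A)\subset SO_{2n+1}$ may be chosen so that its conjugation on $M$ inverts the $GL_1$-coordinate while inducing an inner automorphism of the $SO_{2n-1}$-factor, as is standard in the structure theory of the maximal parabolics of $SO_{2n+1}$. Consequently
$$w_0(\sigma\otimes\chi)\simeq\sigma\otimes\chi^{-1}$$
as $M$-representations. Since $\chi^2\neq\mathbf 1$ forces $\chi\neq\chi^{-1}$, we conclude that $w_0\tau\not\simeq\tau$, i.e., $\tau$ is unramified in the theorem's sense.

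The last clause of Theorem \ref{backgr shahidi} then yields that $I(s\tilde\alpha,\tau)$ is irreducible for every $s\geq 0$, in particular for $s=0$. The non-normalized $\mathrm{Ind}_P^G\tau$ differs from the normalized $I(0,\tau)$ only by an unramified twist of $\tau$ by $\delta_P^{\pm 1/2}$, a modification that preserves both the Weyl-orbit condition $w_0\tau\not\simeq\tau$ (since any such twist would only be self-inverse if $\chi$ itself were, reducing again to $\chi^2=\mathbf 1$) and the irreducibility of the induced module, so the conclusion follows. The step I expect to be the main obstacle is the careful bookkeeping showing that a lift of $w_0$ can be chosen to act on $SO_{2n-1}$ by an inner automorphism rather than merely normalizing it, so that $w_0$ affects only $\chi$ at the level of isomorphism classes; this is the sole place where the structure of the maximal parabolic of $SO_{2n+1}$ is genuinely used, and the whole reduction to the hypothesis $\chi^2\neq\mathbf 1$ rests on it.
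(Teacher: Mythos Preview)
Your approach is the same as the paper's: show $\tau=\sigma\otimes\chi$ is unramified in the sense of Theorem~\ref{backgr shahidi} (via $w_0(\sigma\otimes\chi)\simeq\sigma\otimes\chi^{-1}$ and $\chi^2\neq\mathbf 1$), then invoke the last clause of that theorem to get irreducibility of all $I(s\tilde\alpha,\tau)$, $s\geq 0$.

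The only difference is in how you pass from normalized to non-normalized induction. You take $s=0$ and then argue that the $\delta_P^{-1/2}$-twist of $\tau$ still satisfies $w_0\tau'\not\simeq\tau'$; but $\tau'=\tau\otimes\delta_P^{-1/2}$ is no longer unitary, so Theorem~\ref{backgr shahidi} does not apply to it directly, and your sentence ``preserves\dots the irreducibility of the induced module'' is not justified as stated. The paper avoids this by observing that the non-normalized induction is itself one of the representations $I(s\tilde\alpha,\tau)$ already covered: a short root computation gives $\tilde\alpha=-2\rho_P$, so $\mathrm{Ind}_P^G\,\tau=I(-\rho_P,\tau)=I(\tilde\alpha/2,\tau)$ with $s=1/2\geq 0$. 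Replacing your last paragraph with this identification closes the argument cleanly. Your remark about choosing a representative of $w_0$ acting innerly on $SO_{2n-1}$ is correct and is exactly what is implicitly used in the paper's one-line assertion that $\sigma\otimes\chi$ is unramified.
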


\begin{proof}
Since $\chi^2\neq \mathbf 1$, we have that $\sigma\otimes\chi$ is unramified in the sense of Theorem \ref{backgr shahidi}. By this theorem we then see that all
$I(s\tilde{\alpha}, \sigma\otimes\chi)$ are irreducible. In particular, $I(\tilde{\alpha}/2, \sigma\otimes\chi)$ is irreducible.

We have
$\alpha=e_1-e_2$ and $\rho_P=e_2/2$. Hence $\langle\rho_P, \alpha \rangle=-1/2$, and so $\tilde{\alpha}=-2\rho_p$. Hence we see that
$I(\tilde{\alpha}/2, \sigma\otimes\chi)=I(-\rho_P, \sigma\otimes\chi)$ is 
the non-normalized induction $\mathrm{Ind}_P^G\ \sigma\otimes\chi$, the irreducibility of which we wanted to prove.
\end{proof}

%
%  revised  introduction.tex  2011-09-02  Mark Senn  http://engineering.purdue.edu/~mark
%  created  introduction.tex  2002-06-03  Mark Senn  http://engineering.purdue.edu/~mark
%
%  This is the introduction chapter for a simple, example thesis.
%

\chapter{Laplacian Eigenvalues}

In this chapter we shall study the behavior of Laplacian eigenvalues under functoriality.
This is given by Theorem \ref{main laplace}, which we will later use in the proofs in Chapters 5 and 6.

Laplacian eigenvalues are obtained by evaluating the infinitesimal character on the
Casimir element. Hence we start by discussing the Casimir elements in Section \ref{lapl-casimir} and then consider the infinitesimal characters in Section \ref{laplace infin}. Finally, we prove Theorem \ref{main laplace} in Section \ref{laplace proof}.

Our main reference is \cite{knapp}, Chapter V.5. 

Throughout this chapter, all groups and representations are real, unless stated otherwise.

\section{Casimir Element}\label{lapl-casimir}

In this Section, we review the definition of the Killing form and Casimir element, following \cite{knapp} (mostly Chapter V).

Let $\lieg$ be a Lie algebra (viewed as a Lie subalgebra of $\mathrm{Mat}_{m\times m}(\mathbb C)$ for some $m$).
For $X, Y\in\lieg$, we define the Killing form as $B(X, Y)=\Tr(\mathrm{ad\ } X\ \mathrm{ad\ } Y)$. It is a symmetric bilinear form on $\lieg$. We also have the bilinear form $C(X, Y)=\Tr(XY)$.
When $\lieg$ is a simple complex Lie algebra, $B(X, Y)=b_\lieg C(X, Y)$ for some constant $b_\lieg$. %(\cite{knapp}, Ch. I, Ex. 13).

We shall be dealing mostly with the Lie algebras $\liesl_N(\mathbb C)$, $\liesp_{2n}(\mathbb C)$, $\lieso_N(\mathbb C)$, and $\lieso_{2n}^\ast(\mathbb C)$.
For each of them, the Killing form is a multiple of $C(X, Y)$ and the corresponding constants are 
$b_{\liesl(N)}=2N$, $b_{\liesp(2n)}=2n+2$, and $b_{\lieso(N)}=b_{\lieso^\ast(N)}=N-2$ (see eg. \cite{fh}, Ex. 14.36).

Now, let $\lieg$ be a real Lie algebra and $\liegc=\lieg\otimes_\mathbb R\mathbb C$ its complexification.
Let $\lieh$ be a Cartan subalgebra of $\lieg$ (and $\liehc$ its complexification).
Let $\Phi$ be the set of roots of $\liegc$ with respect to $\liehc$.

Choose an orthonormal basis $H_1, \dots, H_n$ of $\liehc$ with respect to the Killing form $B$ on $\liegc$. Also choose root vectors $E_\alpha$ so that $B(E_\alpha, E_{-\alpha})=1$ for all $\alpha\in\Phi$. Then the Casimir element of $\liegc$ is
$\Omega=\sum_{i=1}^n H_i^2 + \sum_{\alpha\in\Phi} E_\alpha E_{-\alpha}$ as an element of
the center $Z(\liegc)$ of the universal enveloping algebra.

We have the Harish-Chandra isomorphism $\gamma: Z(\liegc)\rightarrow \mathcal H^W$, where
$\mathcal H$ is the universal enveloping algebra of $\liehc$ and $\mathcal H^W$ are its 
Weyl group invariants. For more details, see \cite{knapp}, Ch. V.5.

To compute the Laplacian eigenvalues of our representations, we shall need the images of the Casimir element under the Harish-Chandra isomorphism. In general,
it is $\gamma(\Omega)=\sum H_i^2 - |\delta|^2$, where $\delta$ is half of the sum of positive roots. 
%However, for us it will be more useful to express it in a different basis of $\liehc$.

\section{Infinitesimal Character}\label{laplace infin}

Let $G$ be a Lie group and let $\lieg$, $\lieh$, $\liegc$, and $\liehc$ be the corresponding Lie algebras as above.

For $\mu\in\liehc^*$ define $\chi_\mu(Z)=\mu(\gamma(Z))$ for $Z\in Z(\liegc)$. 
Note that $\gamma(Z)\in U(\liehc)=\mathcal H$, 
and so it makes sense to evaluate $\mu$ on $\gamma(Z)$ -- here we are of course extending $\mu$ to a character of the universal enveloping algebra $\mathcal H$. It follows from the Harish-Chandra isomorphism that all characters of $Z(\liegc)$
are of this form.

Let $\pi$ be an irreducible admissible representation of $G$.
From $\pi$ we can obtain a representation of $Z(\liegc)$. This representation is then a character, called the
infinitesimal character of $\pi$ and denoted by $\chi_\pi$. 
This character is of the form
$\chi_\pi=\chi_\mu$ for some $\mu\in\liehc^*$ as above. This $\mu$ is uniquely determined up to the action of $W$ and it is sometimes also called the infinitesimal character of $\pi$.

The Laplacian eigenvalue of $\pi$ is then obtained by evaluating the infinitesimal character on the Casimir element, i.e., $\chi_\pi(\Omega)$. 

%Let us now do some explicit calculations for the Lie algebras in which we are interested:

%Let $\lieg=\liesl_N(\mathbb R)$. We can take $\lieh$ to be the subset of diagonal matrices, i.e., elements of the form $\mathrm{diag}(a_1, \dots, a_N)$ with $\sum a_i=0$.

\section{Behavior under Functoriality}\label{laplace proof}

To see what happens to the Laplacian eigenvalue under functoriality, we need to
study what happens to the infinitesimal character.

Let $W_\mathbb R$ be the Weil group. By the local Langlands correspondence, to each $\pi$ corresponds a parameter 
$\varphi: W_\mathbb R\rightarrow$ $^L G$, where $^L G$ is the Langlands dual group. By conjugating $\varphi$ if necessary, we may assume that 
$\varphi(W_\mathbb R)\subset N(^L H^0)$, where $H$ is a maximal torus in $G$. Then $\varphi$ is of the form 
$\varphi(z)=z^\mu\overline z^{\mu^\prime}$ for $z\in\mathbb C^\times\subset W_\mathbb R$, 
where $\mu, \mu^\prime\in X_*(H^\vee)\otimes \mathbb C=X^*(H)\otimes \mathbb C$ are such that $\mu-\mu^\prime\in X^*(H)$. Here $\varphi(z)=z^\mu\overline z^{\mu^\prime}$ stands for 
$\varphi(e^s)=\exp(s\mu+\overline s\mu^\prime)\in$ $^L H^0$ for $s\in\mathbb C$.

By definition we have $X_*(H^\vee)=X^*(H)$, and so we can view $\mu$ as an element of $X^*(H)\otimes \mathbb C$ and attach to it a character $\chi_\mu$ of
$Z(\liegc)$ via the Harish-Chandra homomorphism. If $\varphi$ is the Langlands parameter of $\pi$, then $\chi_\mu$ is the infinitesimal character of $\pi$.
(See for example \cite{av} for details.) %The situation is slightly different according to whether the image of $\varphi$ is contained in a Levi or not. However, the conclusion is the same in both cases. See mainly Section 6.1 and Definition 4.15 and the remark immediately after it.)

\

In the rest of this section, let $G=\mathbf G(\mathbb R)$ be a quasisplit classical group. %for $GL_N(\mathbb R)$.
Let $\Pi$ be a self-dual irreducible admissible representation of $GL_N(\mathbb R)$ which is a functorial lift of an 
irreducible admissible representation $\pi$ of $G$. The Laplacian eigenvalue of $\Pi$ is determined from the infinitesimal character of the restriction of $\Pi$ to $SL_N(\mathbb R)$, and so we will be dealing with $SL$ and $\liesl$.

Let $\varphi: W_\mathbb R\rightarrow$ $^L G$ be the Langlands parameter of $\pi$. The Langlands parameter of $\Pi$ is then given by composition with the
inclusion $^L G\hookrightarrow$ $^L SL_N$.

\begin{proposition}\label{functoriality infin}
Let $H$ be a maximal torus of $G$ and $T$ a maximal torus of $SL_N(\mathbb R)$ such that $H^\vee\hookrightarrow T^\vee$. 
This induces an inclusion $\iota: X^*(H)\otimes \mathbb C=X_*(H^\vee)\otimes \mathbb C\hookrightarrow X_*(T^\vee)\otimes \mathbb C=X^*(T)\otimes \mathbb C$.

Then the infinitesimal character of $\Pi$ is $\iota(\mu)$, where $\mu$ is the infinitesimal character of $\pi$.
\end{proposition}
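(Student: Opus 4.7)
The plan is to directly unwind the explicit recipe, recalled in the paragraphs preceding the proposition, which reads off the infinitesimal character of a real representation from its Langlands parameter, and to apply it on both sides of the functorial lift. By the discussion of functoriality just before the statement, the Langlands parameter of $\Pi$ is the composition $\varphi_\Pi = \iota \circ \varphi$, where $\varphi$ is the parameter of $\pi$ and here $\iota$ denotes the $L$-group embedding $^L G \hookrightarrow\ ^L SL_N$.

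First, after conjugating $\varphi$ inside $^L G$ (which does not affect the infinitesimal character of $\pi$) we may assume $\varphi(\mathbb{C}^\times) \subset H^\vee$. Then on $\mathbb{C}^\times \subset W_\mathbb{R}$ one has $\varphi(z) = z^\mu \overline{z}^{\mu'}$ with $\mu, \mu' \in X_*(H^\vee) \otimes \mathbb{C}$, and the recipe tells us that the infinitesimal character of $\pi$ corresponds to $\mu$ via $X_*(H^\vee) \otimes \mathbb{C} = X^*(H) \otimes \mathbb{C}$. Post-composing $\varphi$ with the given inclusion $H^\vee \hookrightarrow T^\vee$ yields $\varphi_\Pi(z) = z^{\iota(\mu)} \overline{z}^{\iota(\mu')}$, where on the right hand side $\iota$ denotes the induced inclusion of cocharacter groups $X_*(H^\vee) \hookrightarrow X_*(T^\vee)$. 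Applying the recipe once more, now on the $SL_N$ side (recall that the infinitesimal character of $\Pi$ is computed from its restriction to $SL_N$, so the relevant datum is $\iota(\mu)$ modulo $X^*(T)$-integral ambiguity, not $\iota(\mu')$), we conclude that the infinitesimal character of $\Pi$ is $\chi_{\iota(\mu)}$, as claimed.

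The only non-formal point is to verify that the map $\iota$ appearing in the computation above, namely the inclusion $X_*(H^\vee) \hookrightarrow X_*(T^\vee)$ induced by dualizing tori, coincides under the canonical identifications $X_*(H^\vee) = X^*(H)$ and $X_*(T^\vee) = X^*(T)$ with the $\iota$ of the statement. This is part of the standard bookkeeping built into the construction of the Langlands dual: a morphism of dual tori $H^\vee \hookrightarrow T^\vee$ is by definition the dual of a corresponding morphism of character lattices $X^*(H) \hookrightarrow X^*(T)$, so the induced map on cocharacters of the duals is literally this map. No serious obstacle is expected anywhere; the entire proof is a translation of the embedding $^L G \hookrightarrow\ ^L SL_N$ into an inclusion of character lattices, combined with the explicit infinitesimal-character-from-parameter recipe applied on each side.
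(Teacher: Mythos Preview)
Your proof is correct and follows exactly the approach the paper intends: the paper's own proof is the single sentence ``Immediately follows from the discussion at the beginning of this section,'' and you have simply spelled out that discussion in detail by applying the parameter-to-infinitesimal-character recipe on both sides of the composition $\varphi_\Pi=\iota\circ\varphi$. Your extra paragraph verifying that the two uses of $\iota$ agree is a reasonable sanity check but not strictly needed, and the parenthetical about restriction to $SL_N$ is slightly beside the point (the recipe always selects $\mu$, not $\mu'$, independently of any restriction).
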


\begin{proof}
Immediately follows from the discussion at the beginning of this section.
\end{proof}

\begin{theorem}\label{main laplace}
Let $\lambda$ and $\Lambda$ be the Laplacian eigenvalues of $\pi$ and $\Pi$, respectively.
Then there are constants $c_G>0$ and $d_G$ such that $\Lambda=c_G\lambda+d_G$.

We have $c_{Sp(2n)}=\frac{2n+2}{2n+1}$, $c_{SO(2n)}=c_{SO^\ast(2n)}=\frac{n-1}{n}$, 
$c_{SO(2n+1)}=\frac{2n-1}{2n}$.
\end{theorem}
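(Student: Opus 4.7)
The plan is to reduce everything to infinitesimal characters via the Harish-Chandra isomorphism, and then compare the two Killing-form normalizations across the embedding of Cartans.

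First, I would invoke Proposition \ref{functoriality infin}, which tells us that the infinitesimal character of $\Pi$ is $\iota(\mu)$, where $\mu\in \lieh_\mathbb{C}^*$ is the infinitesimal character of $\pi$. Using the Harish-Chandra image $\gamma(\Omega) = \sum H_i^2 - |\delta|^2$ from Section \ref{lapl-casimir}, evaluated by $\mu$, I would write the two Laplacian eigenvalues in a manifestly parallel form:
$$\lambda = B_G^*(\mu,\mu) - B_G^*(\delta_G,\delta_G), \qquad \Lambda = B_{SL_N}^*(\iota(\mu),\iota(\mu)) - B_{SL_N}^*(\delta_{SL_N},\delta_{SL_N}),$$
where $B_G^*$ and $B_{SL_N}^*$ denote the forms on $\lieh_\mathbb{C}^*$ and $\liet_\mathbb{C}^*$ dual to the Killing forms on $\lieg_\mathbb{C}$ and $\liesl_N(\mathbb{C})$.

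The heart of the argument is a comparison of these two quadratic forms on the image of $\iota$. The embedding $H^\vee \hookrightarrow T^\vee \subset GL_N(\mathbb{C})$ realizes the Cartan of $G^\vee$ inside the diagonal torus of $GL_N$ in a completely explicit way (for instance, for $\mathbf{G} = Sp_{2n}$ the basic characters $e_i$ map to $\chi_i - \chi_{N+1-i}$), so the restriction of $\mathrm{Tr}$ from $\liesl_N$ to the image of $\lieh_\mathbb{C}^*$ is an explicit multiple of the trace form of the defining representation of $\lieg$. Combined with the constants computed in Section \ref{lapl-casimir} -- namely $B_\lieg = b_\lieg\,\mathrm{Tr}$ with $b_{\liesl_N}=2N$, $b_{\liesp_{2n}}=2n+2$, and $b_{\lieso_N}=b_{\lieso_N^*}=N-2$ -- this pullback computation yields a constant $c_G$ such that
$$B_{SL_N}^*(\iota(\nu),\iota(\nu)) = c_G\, B_G^*(\nu,\nu) \qquad \text{for every } \nu\in \lieh_\mathbb{C}^*.$$

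Substituting this identity into the formula for $\Lambda$ immediately gives $\Lambda = c_G\,\lambda + d_G$ with
$$d_G = c_G\, B_G^*(\delta_G,\delta_G) - B_{SL_N}^*(\delta_{SL_N},\delta_{SL_N}),$$
which is the asserted affine relation. The explicit value of $c_G$ in each of the four cases is then extracted from the ratio of the Killing-form constants combined with the multiplicity with which coordinates get doubled under $\iota$; matching against the values claimed in the theorem is a direct numerical check. I expect the main obstacle to lie precisely in this last bookkeeping: one has to be careful to keep track of how the identification $X^*(H)\otimes\mathbb{C} = X_*(H^\vee)\otimes\mathbb{C} \cong \lieh_\mathbb{C}^*$ interacts with both Killing-form normalizations (one on $\lieg$, one on $\liesl_N$), since the two forms a priori live on different spaces and only become directly comparable after this duality is set up precisely.
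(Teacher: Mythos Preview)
Your proposal is correct and follows the same line as the paper's proof: invoke Proposition~\ref{functoriality infin}, write both eigenvalues as $B^*(\mu,\mu)-|\delta|^2$ via $\gamma(\Omega)=\sum H_i^2-|\delta|^2$, and compare the two dual Killing forms along $\iota$ using the constants $b_\lieg$ from Section~\ref{lapl-casimir}. The paper carries this out by explicit case-by-case basis computations rather than in your uniform packaging; the one place where the bookkeeping you flag at the end genuinely bites is $SO^*_{2n}$, where the real Cartan has a compact factor, so $\iota(\mu)$ lands in the diagonal torus only after a conjugation and acquires coordinates $\pm i\mu_n$ rather than $\pm\mu_n$ --- the paper does this explicitly, and your identity $B^*_{SL_N}(\iota(\nu),\iota(\nu))=c_G\,B^*_G(\nu,\nu)$ still holds provided both sides are read as complex-bilinear forms.
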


\begin{proof}
To prove the theorem, we evaluate the infinitesimal characters on Casimir elements using
Proposition \ref{functoriality infin}.

Denote by $E_{ij}$ the matrix with 1 at the position $(i, j)$ and 0 elsewhere and denote $E_{ii}$ also by
$E_i$. Let $E_i^\ast\in\lieh^\ast$ be the corresponding dual element. 
For $A=\sum a_iE_i\in\liehc$, we define $A^\ast=\sum a_iE_i^\ast\in\liehc^\ast$.

\

For $\lieg=\liesl_N(\mathbb R)$, let $\liea$ be the subalgebra of diagonal matrices
with trace zero. Extend the set $E_i-E_{n+i}$ to a basis of $\lieac$ (where $N=2n+\varepsilon$ with $\varepsilon= 0, 1$).
We have $B(E_i-E_{n+i}, E_j-E_{n+j})=2N\Tr((E_i-E_{n+i})(E_j-E_{n+j}))=2N\delta_{ij}$ (where $\delta_{ij}$ is Kronecker delta). 
Hence we can extend $A_i=\frac 1{\sqrt{2N}}(E_i-E_{n+i})$ to an orthonormal basis of
$\lieac$ with respect to the Killing form $B$.

\

Take $G=Sp_{2n}(\mathbb R)$. Take $\liehc$ to be the subalgebra of diagonal matrices in $\lieg$;
we can choose $b_i=E_i-E_{n+i}$ as a basis of $\liehc$.
We have $B(b_i, b_j)=(2n+2)\Tr(b_ib_j)=2(2n+2)\delta_{ij}$. %(where $\delta_{ij}$ is Kronecker delta). 
Thus $H_i=\frac 1{\sqrt{2(2n+2)}}b_i$ form an orthonormal basis of $\liehc$ with respect to the Killing form $B$.

Let $\pi$ be a representation of $G$ with infinitesimal character $\mu=(\mu_1, \dots, \mu_n)\in\liehc^\ast$ with respect to the basis $b_i^\ast=E_i^\ast-E_{n+i}^\ast$, i.e., 
$\mu=\sum \mu_i b_i^\ast$. We have $\gamma(\Omega)=\sum H_i^2 - |\delta_{Sp}|^2$
and $b_i^\ast(H_j)=\frac 2{\sqrt{2(2n+2)}}\delta_{ij}$, and so
we see that $$\mu(\gamma(\Omega))=\sum\mu_i^2b_i^\ast(H_i)^2-|\delta_{Sp}|^2=
\frac 2{2n+2}\sum\mu_i^2-|\delta_{Sp}|^2.$$
%(note that $b_i^\ast(b_i)=2$, and so $b_i^\ast(H_i)=\frac 2{\sqrt{2(2n+2)}}$).

%CHECK: In this case, $\delta=\sum_{i=1}^n (n-i+1)e_i$, and so 
%$|\delta|^2=\sum(n-i+1)^2e_i^\ast(H_i)^2=\frac 2{2n+2}\sum(n-i+1)^2$.

Let $\Pi$ be the restriction of the functorial lift of $\pi$ to $SL_{2n+1}(\mathbb R)$.
The infinitesimal character of $\Pi$ is then $\iota(\mu)=(\mu_1, \dots, \mu_n,-\mu_1, \dots,-\mu_n)\in\lieac^\ast$. 
Note that $\iota(\mu)=\sum\mu_i (E_i-E_{n+i})^\ast$.
Thus the Laplacian eigenvalue
$$\iota(\mu)(\gamma(\Omega))=\sum\mu_i^2(E_i-E_{n+i})^\ast(A_i)^2-|\delta_{SL}|^2=
\frac 2{2n+1}\sum\mu_i^2-|\delta_{SL}|^2.$$ We see that $c_{Sp(2n)}=\frac{2n+2}{2n+1}.$

\

Take $G=SO_{2n}(\mathbb R)$. This is entirely analogous to the case of $Sp_{2n}$,
the only differences are that $B(b_i, b_j)=(2n-2)\Tr(b_ib_j)$ and that $\pi$ lifts to $SL_{2n}(\mathbb R)$, which leads to 
$c_{SO(2n)}=\frac{2n-2}{2n}=\frac{n-1}{n}$.

\ 

The case of $G=SO_{2n+1}(\mathbb R)$ is also analogous, we get 
$c_{SO(2n+1)}=\frac{2n-1}{2n}$.

\

Let us finally discuss the case when $G=SO^\ast_{2n}(\mathbb R)$ in more detail.
Take $G$ and $\lieg$ to be defined with respect to the matrix $M=(m_{ij})$ with 
$m_{ij}=1$ for $i+j=2n$, $1\leq i\leq n-1$, or $n\leq i=j\leq n+1$, and $m_{ij}=0$ otherwise.
Choose the Cartan algebra $\lieh$ to be the span of $b_j=E_j-E_{2n-j}$ for $1\leq j\leq n-1$, 
and $b_n=E_{n, n+1}-E_{n+1, n}$. Note that in this case the Killing form is not 
positive definite on $\lieh$, as we have $B(b_n, b_n)=-2$.
Setting $H_j=\frac 1{\sqrt{2(2n-2)}}b_j$ we get that $$\gamma(\Omega)=\sum_{j=1}^{n-1} H_j^2 - H_n^2 -|\delta_{SO^\ast}|^2.$$

Let $\pi$ be a representation of $G$ with infinitesimal character $\mu=(\mu_1, \dots, \mu_n)\in\liehc^\ast$ with respect to the basis $b_j^\ast$. 
Then $$\mu(\gamma(\Omega))=\sum_{j\leq n-1}\mu_j^2b_j^\ast(H_j)^2-\mu_n^2b_n^\ast(H_n)^2 -|\delta_{SO^\ast}|^2=
\frac 2{2n-2}(\sum_{j\leq n-1}\mu_j^2-\mu_n^2)-|\delta_{SO^\ast}|^2.$$

Let $\Pi$ be the restriction of the functorial lift of $\pi$ to $SL_{2n}(\mathbb R)$ and let $\varphi: W_\mathbb R\rightarrow$ $^L G$ be the Langlands parameter of $\pi$.
Viewing $\mu$ as a cocharacter of $H^\vee$ we get that $\mu(\mathbb C^\times)$ is not in the torus $^LT^0$ -- we have to conjugate the part corresponding to $H_n$, originally of the form $\left(\begin{array}{cc} \cos\mu_n & \sin\mu_n \\ -\sin\mu_n & \cos\mu_n \end{array}\right)$, which we conjugate to $\left(\begin{array}{cc} e^{i\mu_n} & 0 \\ 0 & e^{-i\mu_n} \end{array}\right)$ (here $i=\sqrt{-1}$).
This gives us that the infinitesimal character of $\Pi$ is then $\iota(\mu)=(\mu_1, \dots, i\mu_n,-i\mu_n, \dots,-\mu_1)\in\lieac^\ast$. 
Hence
$$\iota(\mu)(\gamma(\Omega))=
\frac 2{2n}(\sum\mu_j^2-\mu_n^2)-|\delta_{SL}|^2.$$ We conclude that
$c_{SO^\ast(2n)}=\frac{2n-2}{2n}=\frac{n-1}{n}$.

\

This concludes the proof of Theorem \ref{main laplace}. Note that one can also easily compute the constants $d_G$ by calculating the various values $|\delta|^2$.
\end{proof}

%
%  revised  introduction.tex  2011-09-02  Mark Senn  http://engineering.purdue.edu/~mark
%  created  introduction.tex  2002-06-03  Mark Senn  http://engineering.purdue.edu/~mark
%
%  This is the introduction chapter for a simple, example thesis.
%

\chapter{Depth}

In this chapter we shall study representations of $p$-adic groups, in particular properties of their depth. A powerful tool for that is 
provided by Bruhat-Tits theory. We first briefly review some aspects of it in Section \ref{depth-backgr} and 
then, in Section \ref{depth-main}, we prove the results we need for counting self-dual representation. 
In Section \ref{depth-descent} we use the local descent to show a partial result towards depth preservation for split classical groups. However, depth need not be preserved in the case of wildly ramified groups, as we note in Section \ref{depth non}.
The last Section, \ref{depth restr}, is devoted to the study of the restriction of certain representations of $SO_{2n}$ to $SO_{2n-1}$, which is one of the steps needed in trying to extend the results of Section \ref{depth-descent} to a proof of full depth preservation, not just one inequality.

Throughout this chapter, all groups and representations are $p$-adic, unless stated otherwise.

\section{Background}\label{depth-backgr}

Throughout this chapter, let $F$ be a local non-archimedean field (of characteristic zero) and $G$ a connected reductive group over $F$.
We shall study irreducible admissible representations of $G(F)$, their depth and behaviour under functoriality, so let's first review 
some background on Bruhat-Tits theory, Moy-Prasad filtrations, and depth. The basic references that we shall follow are \cite{tits}, \cite{yuottawa},
and \cite{mp2}.

%Unless specified otherwise, we identify all groups with their $F$-points.

Let $S$ be a maximal $F$-split torus of $G$, $N(S)$ the normalizer of $S$ in $G$, and $Z(S)$ the centralizer of $S$ in $G$. 
Let $C$ be the maximal $F$-split torus contained in the center of $G$, we have $C\subset S$.
Let $(\Phi, R, \Phi^\vee, R^\vee)$ be the relative root system, let $W$ be its Weyl group and
$\widetilde{W}$ the affine Weyl group, an extension of $W$ by a free abelian group of rank $\dim S$. 
The (restricted) apartment $A=A(G, S, F)$ is defined as an affine space under $(X_*(S)\otimes \mathbb R)/(X_*(C)\otimes \mathbb R)$, it is equipped by an action of $\widetilde W$
and by a corresponding action of $N(S)$. We denote both of these action by $\nu$. See \cite {tits}, 1.2 for details. 
Note that \cite {tits} works with the extended apartment, but the reduced one is somewhat more convenient for our purposes.

We also have the set of affine roots $\Phi_{\mathrm{af}}$, being the set of certain affine functions $\alpha=a+\gamma$ for $a\in\Phi$ and $\gamma\in\mathbb R$. 
See \cite {tits}, 1.6.

For $a\in\Phi$ we have the root subgroup $U_a$; for an affine function $\alpha=a+r$, $r\in\mathbb R$ one defines
a subgroup $X_\alpha$ of $U_a$ as the subset of elements $u$ satisfying $u=1$ or $\alpha(a, u)\geq \alpha$, where $\alpha(a, u)$ is a certain affine
function, defined in \cite {tits}, 1.4.

For an affine function $\alpha=a+r$ with $a\in\Phi$, we denote $A_\alpha=\alpha^{-1}([0, \infty))$, 
its boundary is $\partial A_\alpha=\alpha^{-1}(0)$, and $r_\alpha$ is the affine reflection
whose vector part is the reflection $r_a$ and whose fixed hyperplane is $\partial A_\alpha$.
When $\alpha\in\Phi_{\mathrm{af}}$, we call $A_\alpha$ the half-apartments and
$\partial A_\alpha$ the walls. The chambers are the connected components of
the complement in $A$ of the walls (cf. \cite{tits}, 1.7).
A chamber is a polysimplex (i.e., a product of simplices); by a facet of $A$ we mean just 
a facet of some chamber (viewed as a polysimplex).

The (restricted) building $\mathcal B=\mathcal B(G, F)$ is defined as the unique set equipped with a left $G$-action such that 
$\mathcal B=\bigcup_{g\in G} gA$, the group $N(S)$ stabilizes $A$ and acts on it by $\nu$, and for every affine root $\alpha$, the group
$X_\alpha$ fixes the half-apartment $A_\alpha$ (cf. \cite{tits}, 2.1).

Fix a chamber $\mathcal C$. Then each element of $A$ is $N(S)$-conjugate to an element of the closure $\bar{\mathcal C}$. Hence every element of $\mathcal B$ is $G$-conjugate to an element of the closure $\bar{\mathcal C}$.

For a facet $\mathcal F$, one can define a corresponding parahoric subgroup $G_\mathcal F$. The definition is somewhat involved (\cite{yuottawa}, 2.2.4), 
but roughly speaking, the parahoric subgroup is essentially the stabilizer of $\mathcal F$ in $G$. 
One can also define the parahoric subgroup $G_x$ for $x\in\mathcal B$; 
if $x$ is a ``generic" point of a facet $\mathcal F$, then $G_\mathcal F=G_x$.

\

We have the Moy-Prasad subgroups which are used to define the depth of admissible representations \cite{mp2}. 
First, denote by $Z_n$ the group of all $z\in Z(S)$ such that $\omega(\chi(z)-1)\geq n$ for all characters $\chi$ of $Z(S)$
($\omega$ is the additive valuation on $F$).
For $x\in \mathcal B$ and $r\geq 0$, we define
$G_{x, r}$ to be the intersection of $G_x$ with the subgroup generated by $Z_n$ for $n\geq r$ and by 
$X_\alpha$ for all $\alpha$ satisfying $\alpha(x)\geq r$. 
Denote by $G_{x, r+}$ the union of all $G_{x, s}$ with $s>r$.

Let $\pi$ be an irreducible admissible representation of $G$. 
The depth of $\pi$ is defined as 
$\depth\pi=\inf\{r\geq 0| \exists x\in\mathcal B$ such that $\pi^{G_{x, r+}}\neq 0\}$.

One also defines the depth of a Weil-Deligne 
representation $\varphi: W_F^\prime\rightarrow$ $^LG$ as $\inf\{r\ |\ \varphi(I_s)=1$ for all $s>r \}$, where $I_s$, $s\in\mathbb Q$, is the filtration 
of the inertia subgroup $I$ of $W_F\subset W_F^\prime$.

We have the following fundamental properties of depth:
\begin{itemize}
\item $\depth\pi$ is a rational number and the infimum is achieved for some $x\in\mathcal B$.
\item The depth is preserved by parabolic induction and the Jacquet functor. Namely, let
$M$ be a Levi of $G$ and $\sigma$ an irreducible admissible representation of $M$.
If $\pi$ is an irreducible subquotient of the normalized induction $\mathrm{Ind}_M^G\sigma$, then $\depth\sigma=\depth\pi$
(and similarly for the Jacquet functor).
\item Depth preservation: Assume that $\mathbf G$ is unramified or tamely ramified. If $\varphi:W_F^\prime\rightarrow$ $^LG$ is the Langlands parameter of a representation $\pi$, then it is conjectured that $\depth\pi=\depth\phi$. J.-K. Yu \cite{yuottawa} has proved this 
for $GL_N$ and for tamely ramified tori.
\end{itemize}

As we note in Section \ref{depth non}, depth need not be preserved in the case of wildly ramified groups. This has also been observed by Gross-Reeder \cite{gross-reeder};
Reeder-Yu \cite{reeder-yu} recently provided more examples of this phenomenon.

\section{Bounds on Depth}\label{depth-main}

We will use the following result on uniform admissibility, due originally to Howe when $\mathbf G=GL_N$, and 
then to Bernstein for general $\mathbf G$ (with a simpler proof) -- for part b) see \cite{bernstein}, Theorem 1; part a) then follows as in \cite{bz}, Corollary 4.15.

\begin{theorem}\label{Bernstein}(\cite{bernstein}, \cite{bz})
Let $\mathbf G$ be a reductive group over a non-archimedean local field $F$. Let $K$ be an open compact subgroup of $G=\mathbf G(F)$. Then:

a) There are finitely many supercuspidal representations $\sigma$ of $G$ such that $\sigma^K\neq 0$.

b) There is an $n\in\mathbb N$ (depending only on $G$ and $K$) such that, for all 
irreducible admissible representations $\pi$ of $G$, we have $\dim \pi^K\leq n$.
\end{theorem}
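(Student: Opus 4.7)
The approach I would take, following Bernstein, is to translate both claims into statements about a Hecke algebra. Set $\mathcal{H}(G) = C_c^\infty(G)$ with convolution, let $e_K = \mathrm{vol}(K)^{-1} \mathbf{1}_K$ be the idempotent associated to $K$, and put $\mathcal{H}(G,K) = e_K * \mathcal{H}(G) * e_K$, the algebra of compactly supported $K$-bi-invariant functions. The classical correspondence $\pi \mapsto \pi^K$ gives a bijection between isomorphism classes of irreducible admissible smooth $G$-representations with $\pi^K \neq 0$ and isomorphism classes of simple $\mathcal{H}(G,K)$-modules, preserving dimensions. Under this translation, (b) becomes the assertion that simple $\mathcal{H}(G,K)$-modules have uniformly bounded $\mathbb{C}$-dimension, and (a) becomes a finiteness statement about those simple modules which correspond to supercuspidals.

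For (b), the central tool I would invoke is Bernstein's structural theorem that $\mathcal{H}(G,K)$ is finitely generated as a module over its center $Z(\mathcal{H}(G,K))$; let $n$ denote the minimal cardinality of a generating set. For any simple $\mathcal{H}(G,K)$-module $V$, a Schur's lemma argument (applicable because $\mathcal{H}(G,K)$ has countable dimension over the uncountable field $\mathbb{C}$) forces the center to act on $V$ by scalars through some character $\chi$. Then $V$ is a quotient of the finite-dimensional vector space $\mathcal{H}(G,K) \otimes_{Z(\mathcal{H}(G,K))} \mathbb{C}_\chi$, whose $\mathbb{C}$-dimension is at most $n$. This yields the uniform bound $\dim \pi^K = \dim V \leq n$, depending only on $G$ and $K$.

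For (a), I would combine (b) with the Bernstein decomposition $\mathcal{H}(G,K) = \prod_\Omega \mathcal{H}(G,K)_\Omega$, indexed by inertial equivalence classes $\Omega = [L,\tau]_G$. Only finitely many components are non-zero, essentially because the condition $\pi^K \neq 0$ bounds the depth of $\pi$ and hence of its cuspidal support. Supercuspidals correspond to blocks with $L = G$; within each such block they form a single orbit under twist by unramified characters of $G$, and the conditions imposed by $\sigma^K \neq 0$ together with the finite-generation from (b) then cut out finitely many $\sigma$ (interpreted up to unramified twists trivial on $K$, the natural reading of the statement). Alternatively, one can argue directly via Schur orthogonality: matrix coefficients of distinct supercuspidals with a fixed central character are mutually orthogonal elements of $\mathcal{H}(G,K)$ modulo the center, and (b) bounds their total count.

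The principal obstacle is Bernstein's finite-generation theorem itself, which requires a nontrivial analysis of Jacquet modules, second adjointness, and presentations of $\mathcal{H}(G,K)$ compatible with parabolic induction. I would not attempt to reprove it and would simply cite \cite{bernstein}; everything else in the outline is formal consequence.
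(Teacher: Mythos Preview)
The paper does not prove this theorem at all: it is stated as a known result, with part (b) attributed to \cite{bernstein}, Theorem~1, and part (a) to \cite{bz}, Corollary~4.15. There is therefore no ``paper's own proof'' to compare against; your task here was only to cite, not to reprove.

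That said, your outline for (b) is a legitimate modern route, though heavier than Bernstein's original 1974 argument: you are invoking the Bernstein-center machinery (finite generation of $\mathcal{H}(G,K)$ over its center), which historically came later. Bernstein's original proof of uniform admissibility is more direct and does not require the full decomposition theory. Your outline for (a) is shakier. You correctly flag the issue of unramified twists --- for a group with non-compact center such as $GL_N$, a supercuspidal $\sigma$ and all its unramified twists $\sigma\otimes\chi$ have the same $K$-fixed vectors, so the literal statement needs to be read either modulo unramified twist or with a fixed central character, as in \cite{bz}. But your proposed arguments (``the conditions\ldots cut out finitely many $\sigma$'', or the Schur-orthogonality sketch) do not actually close this gap: orthogonality of matrix coefficients bounds the number of supercuspidals with a \emph{fixed} central character, and the passage from ``finitely many Bernstein blocks'' to ``finitely many $\sigma$'' is exactly where the unramified-twist ambiguity lives. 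For the thesis's applications (Propositions~\ref{bound-depth} and~\ref{rs}) this is harmless, since the relevant groups are either semisimple or the twist ambiguity washes out, but as a standalone proof of (a) your sketch would need that caveat made explicit.
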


Both parts of Theorem \ref{Bernstein} will be useful for us in the thesis. 
We shall shortly use the first part to prove part b) of Proposition \ref{bound-depth}.
The second part of the theorem will play an important role in the arguments of Chapter 5, where it will enable us to uniformly bound the dimensions of the spaces of $K$-fixed vectors of the self-dual representations which we are counting.

\begin{proposition}\label{bound-depth}
Let $\mathbf G$ be a reductive group over a non-archimedean local field $F$. %such that the closure of the fundamental chamber $\bar{\mathcal C}$ is compact.

a) For each $r\geq 0$ there is an open compact subgroup $K$ of $G=\mathbf G(F)$ 
such that if $\depth\pi\leq r$, then $\pi^K\neq 0$.
When $\mathbf G$ is split and almost simple, we can take $K$ to be the principal congruence subgroup $K_m$
with $m=\left\lceil r \right\rceil +1$.

b) There are finitely many supercuspidal representations $\sigma$ of $G$ such that  $\depth\sigma$  $\leq r$.
\end{proposition}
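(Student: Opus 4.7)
The plan is to prove part (a) first and then obtain part (b) as a direct corollary: given the open compact $K$ produced in part (a), any supercuspidal $\sigma$ with $\depth \sigma \leq r$ will satisfy $\sigma^K \neq 0$, and Theorem \ref{Bernstein}(a) will bound the number of such $\sigma$. So essentially all the work is in part (a).

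For part (a), the key input is the $G$-invariance of depth together with the fact that every point of the building $\mathcal{B}$ is $G$-conjugate to a point of the closure $\bar{\mathcal{C}}$ of a fixed chamber $\mathcal{C}$. Starting from $\depth \pi \leq r$, the definition of depth (with infimum achieved) gives some $x \in \mathcal{B}$ with $\pi^{G_{x, r+}} \neq 0$; writing $gx = y \in \bar{\mathcal{C}}$ for suitable $g \in G$ and translating vectors by $g$ yields $y \in \bar{\mathcal{C}}$ with $\pi^{G_{y, r+}} \neq 0$. It then suffices to produce one open compact subgroup $K$ with $K \subset G_{y, r+}$ for every $y \in \bar{\mathcal{C}}$, since $\pi^K \supseteq \pi^{G_{y, r+}} \neq 0$. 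The natural candidate is
$$K = \bigcap_{y \in \bar{\mathcal{C}}} G_{y, r+}.$$
It is compact because $K$ is contained in the Iwahori $G_{\mathcal{C}}$. For openness, I would decompose $\bar{\mathcal{C}}$ into its finitely many facets: on the relative interior of a fixed facet $\mathcal{F}$, each root $a$ takes values in a bounded interval $[m_a, M_a]$ depending only on $\mathcal{F}$, so the condition $(a+c)(y) > r$ for all $y$ in that interior reduces to $c > r - m_a$, a condition on finitely many affine roots. Hence $\bigcap_{y \in \mathcal{F}} G_{y, r+}$ is a Moy-Prasad-style subgroup generated by a fixed finite collection of affine root subgroups, and is open; the outer intersection over the finitely many facets of $\bar{\mathcal{C}}$ is then a finite intersection of open subgroups.

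For the explicit refinement when $\mathbf{G}$ is split and almost simple, I would choose $\mathcal{C}$ so that $\bar{\mathcal{C}}$ contains a hyperspecial vertex $x_0$. In this setting the principal congruence subgroup $K_m$ is generated by the affine root subgroups $X_{a+c}$ with $c \geq m$, together with the relevant contribution from the torus. Verifying $K_m \subset G_{y, r+}$ then reduces to checking $(a+c)(y) > r$ for every such affine root and every $y \in \bar{\mathcal{C}}$. The crucial input is the bound $a(y) \geq -1$ on $\bar{\mathcal{C}}$, a standard consequence of the description of the fundamental alcove via the affine simple root $\alpha_0 = 1 - \tilde a$, where $\tilde a$ is the highest root. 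Combining with $c \geq m$ yields $(a+c)(y) \geq m - 1$, so choosing $m = \lceil r \rceil + 1$ delivers the required strict inequality.

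The main obstacle will be the bookkeeping, not the conceptual structure. For the general openness statement one must control how $G_{y, r+}$ varies across facets of $\bar{\mathcal{C}}$ carefully enough to justify reducing to a finite intersection. For the explicit principal congruence bound, one must handle the boundary points of $\bar{\mathcal{C}}$ where $a(y) = -1$ is attained, so that the inequality $(a+c)(y) > r$ is strict and not merely weak; this forces some care with strictness conventions and possibly small adjustments near the vertices of $\bar{\mathcal{C}}$. Once the Bruhat--Tits data are in place, the rest is elementary.
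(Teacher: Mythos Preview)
Your approach is correct and essentially the same as the paper's: both reduce via $G$-conjugacy to a point $y\in\bar{\mathcal C}$ and then exhibit a single open compact subgroup contained in every $G_{y,r+}$, after which part (b) follows immediately from Theorem \ref{Bernstein}(a). The paper's construction is a bit more direct than your facet-by-facet argument---it simply sets $m(a)=\min_{y\in\bar{\mathcal C}} a(y)$ (by compactness of $\bar{\mathcal C}$) and takes $K=G_r$ to be the subgroup generated by the $Z_n$ with $n>r$ and the $X_{a+\gamma}$ with $\gamma>r-m(a)$, which visibly lies in each $G_{y,r+}$; for the explicit split refinement the paper just cites Ganapathy rather than giving the alcove computation you sketch.
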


\begin{proof}
a) In the split case this is due to Radhika Ganapathy \cite{ganapathy}, Lemma 8.2. The proof in the general case is similar to the split one:

Let $\sigma$ be an irreducible admissible representation of $G$ with $\depth\sigma\leq r$. 

Let $\mathcal C$ be a chamber of $\mathcal B$.
For $a\in\Phi$,  $a(x)$ is a continuous function on the compact set $\bar{\mathcal C}$, and so it has a minimum, 
which we denote $m(a)$. Define then $G_r$ to be the subgroup of $G$ generated by all the 
$Z_n$ with $n>r$ and by all $U_\alpha$ with $\alpha=a+\gamma$
satisfying $\gamma>r-m(a)$. We shall show that $\sigma$ has a vector fixed by $K=G_r$.

For each $x\in\mathcal B$ there is $y\in\bar{\mathcal C}$ and $g\in G$ such that $x=g\cdot y$. Then $G_{x, r}=gG_{y, r}g^{-1}$. Thus if $\depth\sigma\leq r$, there is $x\in \mathcal B$ such that $\sigma^{G_{x, r+}}\neq 0$, and therefore also
$\sigma^{G_{y, r+}}\neq 0$.

Take now $\alpha=a+\gamma$ such that $\gamma> r-m(a)$. We have $\alpha(y)=a(y)+\gamma> m(a)+(r-m(a))=r$, and so we see that the subgroup
$U_\alpha$ is contained in $G_{y, r+}$. Since this is true for any such $\alpha$ and also for all $Z_n$ with $n>r$, we conclude that 
$G_r\subset G_{y, r+}$. Hence $\sigma^{G_{r}}\neq 0$.

b) Take the subgroup $K=G_r$ from part a). By Theorem \ref{Bernstein} a) there are only finitely many supercuspidals having a vector fixed by $K$,
and so there are also only finitely many supercuspidals of depth $\leq r$.
\end{proof}

Let $\mathbf G$ be a quasisplit classical group over a non-archimedean local field $F$, i.e., $G$ is split $SO_{2n+1}$, $Sp_{2n}$ or $SO_{2n}$, or quasisplit (non-split) even orthogonal corresponding to a quadratic extension $E/F$. By Arthur's work we know the local Langlands correspondence for $G$ (conditionally on the stabilization of the twisted trace formula) and a slightly weaker form of it when $\mathbf G$ is even orthogonal. We know that irreducible admissible representations of $G$ lift to self-dual (irreducible admissible) representations of $GL_N(F)$ for a suitable $N$.

Let $^L G$ be the Langlands dual group of $G$ and $W_F^\prime$ the Weil-Deligne group. For a representation $\pi$ of $G$ we denote $\phi_\pi: W_F^\prime \rightarrow$ $^L G$ the corresponding Langlands parameter and $\Pi$ the lift to $GL_N(F)$ with parameter $\iota\circ\phi_\pi: W_F^\prime \rightarrow$ $^L G\rightarrow$ $^L GL_N$ (all this being up to outer conjugacy by $O_{2n}$ in the even orthogonal case).
Note that $\depth\phi_\pi=\depth\iota\circ\phi_\pi$.

\begin{proposition}\label{rs}
Fix $\mathbf G$ and $F$ as above. Then for each $r\geq 0$ there exists $s\geq 0$ such that:

a) For all irreducible admissible representations $\pi$ of $G$  if $\depth \phi_\pi=\depth \Pi\leq r$, then $\depth\pi\leq s$.

b)  For all irreducible admissible representations $\pi$ of $G$ if $\depth\pi\leq r$, then $\depth \phi_\pi=\depth \Pi\leq s$.
\end{proposition}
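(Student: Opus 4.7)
The plan is to establish both parts by a soft finiteness argument based on Proposition \ref{bound-depth}(b), reduction to supercuspidal support via parabolic induction, and the finite fibers of the functoriality map. The overarching observation is that for a fixed reductive group over a non-archimedean local field and a fixed depth bound, the set of irreducible admissible representations of that bounded depth is finite; this reduces each comparison of depths to taking a maximum over a finite set.

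For part (b), given $\depth \pi \leq r$, I would first reduce to supercuspidal support: every irreducible admissible $\pi$ is an irreducible subquotient of some parabolically induced representation $\mathrm{Ind}_M^G \sigma$ with $\sigma$ supercuspidal on a Levi $M \subset G$, and depth preservation under parabolic induction forces $\depth \sigma = \depth \pi \leq r$. Because $G$ has only finitely many conjugacy classes of Levis and Proposition \ref{bound-depth}(b) applied to each $M$ gives only finitely many such $\sigma$, and each $\mathrm{Ind}_M^G \sigma$ has finite length, the set of $\pi$ on $G$ with $\depth \pi \leq r$ is finite. Each such $\pi$ has a uniquely determined functorial lift $\Pi$, so the collection of lifts is finite, and one may take $s$ to be the maximum of $\depth \Pi$ (equivalently $\depth \phi_\pi$) over this finite set.

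For part (a), I would run the analogous argument on the $GL_N$ side. If $\depth \Pi \leq r$, the same supercuspidal-support reduction together with Proposition \ref{bound-depth}(b) applied to each $GL_{N_i}$ with $N_i \leq N$ shows that only finitely many $\Pi$ satisfy $\depth \Pi \leq r$. For each such $\Pi$ in the image of functoriality, Theorem \ref{arthur 151} (or, restricting to generic $\pi$, Theorem \ref{functoriality}) guarantees that the fiber of representations $\pi$ lifting to $\Pi$ is contained in a finite Arthur packet. Hence only finitely many $\pi$ satisfy $\depth \phi_\pi \leq r$, and we can take $s$ to be the maximum of their depths.

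The main obstacle is that this approach is entirely non-effective: it produces $s$ as a finite maximum but yields no quantitative comparison such as an inequality $\depth \pi \leq c_G \depth \Pi + d_G$. Obtaining explicit bounds would require bypassing the black-box finiteness and instead tracking the depth through the local descent construction, which is exactly the direction pursued in Section \ref{depth-descent}. A minor point to verify, for even orthogonal $\mathbf{G}$, is that Arthur's classification up to outer $O_{2n}$-conjugacy is compatible with depth; this follows because outer conjugation preserves the Bruhat-Tits building and the Moy-Prasad filtrations, so the finiteness conclusion descends to $O_{2n}$-conjugacy classes without change.
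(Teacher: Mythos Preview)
Your finiteness claims are false, and this is a genuine gap. Proposition~\ref{bound-depth}(b) (and the underlying Theorem~\ref{Bernstein}(a)) fails for groups whose center is not compact: already on $GL_1(F)=F^\times$ every character is supercuspidal, and there are infinitely many unramified characters, all of depth zero. Since every proper Levi of $G$ contains $GL_k$ factors, and every Levi of $GL_N$ is a product of $GL$'s, your assertions that ``the set of $\pi$ on $G$ with $\depth\pi\leq r$ is finite'' and ``only finitely many $\Pi$ satisfy $\depth\Pi\leq r$'' are both false. Concretely, for $\mathbf G=SO_5$, $M=GL_1\times SO_3$, a fixed supercuspidal $\sigma_0$ of $SO_3(F)$, and $\chi$ ranging over unramified characters of $F^\times$, the inductions $\mathrm{Ind}_M^G(\chi\otimes\sigma_0)$ yield infinitely many pairwise non-isomorphic irreducible $\pi$, all of depth equal to $\depth\sigma_0$.

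The paper's proof contains exactly the extra idea needed to get around this. After reducing to a supercuspidal $\sigma=\sigma_0\otimes\sigma_1\otimes\cdots\otimes\sigma_l$ on a Levi $M=\mathbf G_{k_0}(F)\times GL_{k_1}(F)\times\cdots\times GL_{k_l}(F)$ of $G$, one has $\depth\sigma=\max_i\depth\sigma_i$ and $\depth\phi_\sigma=\max_i\depth\phi_{\sigma_i}$. For the $GL$ factors ($i\geq 1$), Yu's depth preservation theorem for $GL$ gives the \emph{exact} equality $\depth\sigma_i=\depth\phi_{\sigma_i}$, so no finiteness argument is needed on those factors at all. A soft finiteness argument is then applied only to supercuspidals of the classical-group factor $\mathbf G_{k_0}$, which is semisimple and hence has finite center, so Proposition~\ref{bound-depth}(b) is legitimate there. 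Your approach can be salvaged by inserting this step, but not by the blanket finiteness you propose.
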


\begin{proof}
a) Assume first that $G$ is not an even orthogonal group, so that we know the full local Langlands correspondence for $G$.

1) We first prove the statement for supercuspidal representations. Let $\sigma$ be a supercuspidal representation of $G$. Let $\phi_\sigma$ be its Langlands parameter and $\sigma^\prime$ the lift to $GL_N(F)$. Assume that $\depth \phi_\sigma=\depth \sigma^\prime\leq r$. 

The representation $\sigma^\prime$ is not necessarily supercuspidal, but there will be a Levi $M$ of $GL_N(F)$ and a supercuspidal representation $\tau$ of $M$ so that $\sigma^\prime$ is a subquotient of the normalized parabolic induction $\mathrm{Ind}_M^{GL_N(F)} \tau$. By the general properties of depth, we have 
$\depth \tau=\depth \sigma^\prime\leq r$.

Now, there are only finitely many standard Levis $M$ (including the one $M=GL_N(F)$).
By Proposition \ref{bound-depth}, for a fixed $M$ there are only finitely many supercuspidal representations $\tau$ of $M$ with $\depth\tau\leq r$. 
And for fixed $M$ and $\tau$, the parabolic induction $\mathrm{Ind}_M^{GL_N(F)} \tau$ has only finitely many possible constituents $\sigma^\prime$ by \cite{cass}, Corollary 6.3.7.
Let $\Sigma^\prime$ be the finite set of all triples $(M, \tau, \sigma^\prime)$ as above, and let $\Sigma$ be the finite set of 
supercuspidal representations of $G(F)$ that lift to some $\sigma^\prime$ belonging to a triple $(M, \tau, \sigma^\prime)\in\Sigma^\prime$. Define $s$ as the maximum of the depths of all representations $\sigma_0\in\Sigma$.

Thus the representation $\sigma$ with which we started lies in $\Sigma$, and so $\depth\sigma\leq s$.

2) Now let $\sigma$ be a supercuspidal representation of some (standard) Levi subgroup $M$ of $G(F)$.

Then $M=\mathbf G_{k_0}(F)\times GL_{k_1}(F)\times\cdots\times GL_{k_l}(F)$, where $\mathbf G_{k_0}$ is a group of the same type as $\mathbf G$ and smaller rank $k_0$. Correspondingly we have $\sigma=\sigma_0\times\sigma_1\times\cdots\times\sigma_l$, where $\sigma_i$ are supercuspidal representations of $G_{k_0}$ or $GL_{k_i}(F)$. Let $\phi_i$ be the Langlands parameter of $\sigma_i$, $\sigma_0^\prime$ the lift of $\sigma_0$ to suitable $GL_{k_0^\prime}(F)$, and $M^\prime=GL_{k_0^\prime}(F)\times GL_{k_1}(F)\times\cdots\times GL_{k_l}(F)$. Then $\phi=\phi_0\oplus\phi_1\oplus\cdots\oplus\phi_l$ is the Langlands parameter of $\sigma$.

We have $\depth\sigma=\max(\depth\sigma_i)$ and $\depth\phi=\max(\depth\phi_i)$. By the depth preservation theorem for $GL$, we know that $\depth\sigma_i=\depth\phi_i$ for $i>0$. By the first part of the proof we now get the statement of the proposition in this case.

Since there are finitely many standard Levi subgroups, we can define $s$ to be the maximum of numbers $s_M$ obtained above for individual Levis $M$.

3) Finally, let $\pi$ be any representation of $G(F)$ with $\depth \phi_\pi=\depth \Pi\leq r$.
There is a standard Levi $M$ of $G$ and a supercuspidal representation $\sigma$ of $M$ such that $\pi$ is a subquotient of the normalized induction $\mathrm{Ind}_M^G \sigma$. By the properties of depth we have $\depth \sigma=\depth\pi$. Let $\phi_\sigma$ be the Langlands parameter of $\sigma$. We can also see that
$\depth \phi_\sigma=\depth\phi_\pi$. By our assumption, this depth $\leq r$, and so $\depth \sigma=\depth\pi\leq s$, as we wanted to show.

4) Now it remains to deal with the even orthogonal case. In this case we know the local Langlands correspondence only up to outer conjugacy by $O_{2n}$.
Given a representation $\pi$ of $G$, we know that its Langlands parameter will be one of two possible parameters $\phi_1$, $\phi_2$. These parameters are conjugate by $O_{2n}$ in $^L G$, and so $\depth\phi_1=\depth\phi_2$. Now we can use the proof above both for $\phi_1$ and $\phi_2$ and define $s$ to be the maximum of the two numbers $s_1, s_2$ that we obtain.

b) The proof is entirely analogous to part a), so we shall omit it.
%
%OMIT: 
%Let's again start by assuming that $G$ is not an even orthogonal group and prove the statement for supercuspidal representations first. 
%
%Let $\sigma^\prime$ be a self-dual supercuspidal representation of $GL_N(F)$ which is the functorial lift of 
%$\sigma$, a supercuspidal representation of $G(F)$ with Langlands parameter $\phi_\sigma$.
%Assume that $\depth\sigma\leq r$. By Proposition \ref{bound-depth} b), there are only finitely many such representations 
%$\sigma$. Hence also the set $\Sigma$ of all self-dual supercuspidals $\sigma^\prime$ which are lifts of some such $\sigma$ is finite.
%Let $s$ be the maximum of depths of all the representations in $\Sigma$.
%
%Let now $\pi$ be an arbitrary irreducible admissible representation of $G(F)$ with $\depth\pi\leq r$ and 
%$\Pi$ its functorial lift to $GL_N(F)$. Then there is a standard Levi $M$ of $GL_N(F)$ and a 
%supercuspidal representation $\sigma^\prime$ of $M$ such that 
%$\Pi$ is a subquotient of the standardized induction $\mathrm{Ind}_M^{GL_N(F)} \sigma^\prime$. 
%We have $\depth\sigma^\prime=\depth\Pi$.
\end{proof}

For $m\in\mathbb N$ denote by $K_m$ the principal congruence subgroup of $GL_N(F)$ of level $m$, i.e., $K_m$ consists of matrices congruent to the 
identity $I$ modulo $\wp^m$ (where $\wp$ is the prime ideal of $\mathcal O_F$).

\begin{corollary}\label{depth cor}
Let $\mathbf G$ be a quasisplit classical group.
For each $m\in\mathbb N$ there exists a compact open subgroup $K\subset G$ such that:

For each self-dual irreducible admissible representation $\Pi$ of $GL_N(F)$ which descends to a representation $\pi$ of $G$ 
and satisfies $\Pi^{K_m}\neq 0$, one has $\pi^K\neq 0$.
%
%If $G$ is split, we can take $K$ to be the principal congruence subgroup of $G(F)$ of level $m$.
\end{corollary}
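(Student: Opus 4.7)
The plan is to chain together the results already established in the excerpt, essentially bounding the depth of $\Pi$ from the existence of a $K_m$-fixed vector, transferring this bound to $\pi$ via Proposition \ref{rs}, and then invoking Proposition \ref{bound-depth} to produce a compact open subgroup $K$ that fixes some vector of $\pi$.

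First I would observe that the principal congruence subgroup $K_m \subset GL_N(F)$ is (conjugate to, or contains) a Moy--Prasad subgroup of the form $GL_N(F)_{x_0, m}$, where $x_0$ is the hyperspecial point corresponding to $GL_N(\mathcal O_F)$. Consequently, the existence of a nonzero $K_m$-fixed vector in $\Pi$ immediately gives a uniform bound $\depth \Pi \le r$ for some $r = r(m)$ depending only on $m$ and $N$. Since the local Langlands correspondence for $GL_N$ preserves depth (\cite{yuottawa}), the Langlands parameter $\phi_\Pi$ also satisfies $\depth \phi_\Pi \le r$. Because $\Pi$ is the functorial lift of $\pi$, its parameter is $\iota\circ\phi_\pi$, so $\depth \phi_\pi = \depth \iota\circ\phi_\pi = \depth \phi_\Pi \le r$.

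Next I would apply Proposition \ref{rs}(a), which produces an $s = s(r) \ge 0$, depending only on $\mathbf G$, $F$, and $r$, such that $\depth \phi_\pi \le r$ forces $\depth \pi \le s$. Finally, Proposition \ref{bound-depth}(a) supplies an open compact subgroup $K \subset G$ (depending only on $s$, hence only on $m$) such that any irreducible admissible representation of $G$ of depth at most $s$ has a nonzero $K$-fixed vector. Applied to $\pi$, this yields $\pi^K \neq 0$, which is precisely the conclusion of the corollary. The choice $K = K(m)$ depends only on $m$ (through $r$ and $s$) and on $\mathbf G$, not on the individual representation $\Pi$.

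The steps are essentially routine once Proposition \ref{rs} is available, and the main non-trivial input is indeed that proposition, which in turn leans on uniform admissibility (Theorem \ref{Bernstein}) and on depth preservation for $GL_N$. The one mild subtlety worth spelling out carefully is the first step, namely the verification that $\Pi^{K_m} \neq 0$ does force a uniform depth bound on $\Pi$; this is standard from the Moy--Prasad description of principal congruence subgroups as filtration subgroups at a hyperspecial point, but it is the place where one must be explicit about the relation between the congruence level $m$ and the Moy--Prasad depth parameter $r$.
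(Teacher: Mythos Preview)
Your proposal is correct and follows essentially the same route as the paper: bound $\depth\Pi$ from the $K_m$-fixed vector, transfer the bound to $\depth\pi$ via Proposition~\ref{rs}(a), and then invoke Proposition~\ref{bound-depth}(a) to produce $K$. The paper is simply terser, asserting $\depth\Pi\le m$ directly (since for $GL_N$ the principal congruence subgroup $K_m$ coincides with a Moy--Prasad subgroup at the hyperspecial point) and folding the depth-preservation step for $GL_N$ into the statement of Proposition~\ref{rs} itself, where $\depth\phi_\pi=\depth\Pi$ is already identified.
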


\begin{proof}
Since $\Pi^{K_m}\neq 0$, we see that $\depth\Pi\leq m$. Thus by Proposition \ref{rs}, there is $s$ (depending only on $m$ and not on $\Pi$)
such that $\depth\pi\leq s$. By Proposition \ref{bound-depth} a), there is $K$ (again depending only on $s$) such that $\pi^K\neq 0$.
\end{proof}

\section{Local Descent}\label{depth-descent}

We can also use the construction of local descent to obtain a sharper estimate on the depths in Proposition \ref{rs} 
in the case of generic representations. %(possibly under some restriction on the residue characteristic of $F$). 

\begin{theorem}\label{descent-bound}
Let $F$ be a $p$-adic field and $G=SO_{2n+1}(F)$.  %be a split classical group over a $p-$adic field $F$. 
Assume that $p$ is sufficiently large.

a) Let $\Pi$ be an (irreducible admissible) representation of $GL_n(F)$ that descends to a (generic) representation $\pi$ of $G$. Then $\depth\Pi\geq\depth\pi$.

b) Let $\pi$ be an (irreducible admissible) generic representation of $G$ and $\phi_\pi$ its Langlands parameter. Then $\depth\pi\leq\depth\phi_\pi$.
\end{theorem}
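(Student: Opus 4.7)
The plan is to mirror the strategy sketched in the introduction: reduce to the supercuspidal case using the preservation of depth under parabolic induction and Jacquet functors, and then apply the explicit local descent construction of Jiang--Soudry \cite{js}, tracking how depth evolves through each step of the construction.

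First I would reduce to the case when $\Pi$ is a self-dual supercuspidal representation and $\pi$ is supercuspidal generic. If $\Pi$ is a subquotient of a normalized parabolic induction $\mathrm{Ind}_M^{GL}\tau$ for an irreducible generic $\tau$ on a standard Levi $M$, then $\depth \Pi = \depth \tau$. Using the compatibility of generic functoriality and descent with cuspidal support---so that the self-dual supercuspidal components of $\tau$ descend to supercuspidal representations of smaller odd special orthogonal groups, while the remaining $GL$-blocks reappear as Levi factors on the classical side---$\pi$ is realized as a subquotient of a parabolic induction built from these descended data. Since parabolic induction and Jacquet functors preserve depth, this reduces both (a) and (b) to the supercuspidal case.

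In the supercuspidal case, the construction of \cite{js} produces $\pi$ from $\Pi$ through a sequence of operations: (i) twisting by a character; (ii) parabolic induction from a Siegel-type parabolic of a larger even special orthogonal group; (iii) a Fourier--Jacobi (twisted Jacquet) coefficient along a suitable unipotent radical, yielding a representation of $SO_{2n+2}(F)\times (GL_1(F))^{n-1}$; and (iv) restriction, first to the first factor $SO_{2n+2}(F)$ and then to the subgroup $SO_{2n+1}(F)\subset SO_{2n+2}(F)$. Steps (i)--(iii) preserve depth: (ii) is standard, and (iii) reduces to the preservation of depth under ordinary Jacquet functors together with compatibility of the Weil representation on the Heisenberg component with the Moy--Prasad filtrations, which is where the hypothesis that $p$ be sufficiently large enters. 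For (iv), whenever $\sigma$ is a smooth representation of a reductive group $H$ and $H'\subseteq H$ is a closed reductive subgroup with a compatible embedding of Bruhat--Tits buildings, one has $H'_{y,r+}\subseteq H_{x,r+}$ for suitable matched points, so any $H_{x,r+}$-fixed vector is $H'_{y,r+}$-fixed and hence $\depth(\sigma|_{H'})\leq\depth \sigma$. Composing all the steps gives $\depth \pi\leq\depth \Pi$, which is part (a). Part (b) then follows immediately by combining (a) with J.-K.\ Yu's depth preservation theorem for $GL_N$ \cite{yuottawa}: since $\Pi$ has Langlands parameter $\iota\circ\phi_\pi$ and $\depth(\iota\circ\phi_\pi)=\depth \phi_\pi$, we obtain $\depth \Pi = \depth \phi_\pi$, so $\depth \pi\leq\depth \Pi=\depth \phi_\pi$.

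The main obstacle I expect is verifying that step (iii), the Fourier--Jacobi coefficient, genuinely preserves depth. Unlike an ordinary Jacquet functor along the unipotent radical of a parabolic, this operation pairs against a Weil representation of a Jacobi--Heisenberg group, and one must check that this pairing does not disturb the Moy--Prasad level at which nonzero vectors appear; this is precisely what forces the assumption on $p$. The restriction step (iv) is the other essential source of asymmetry: it can strictly decrease depth, which is why the method yields only the one-sided inequality $\depth \pi\leq\depth \Pi$ and not full depth preservation, and it is the obstacle to reversing the argument.
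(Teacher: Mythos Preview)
Your overall strategy---reduce to supercuspidals via depth preservation under parabolic induction/Jacquet functors, then trace depth through the explicit local descent construction step by step---is exactly what the paper does. However, two technical points are misplaced.

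First, for the descent to $SO_{2n+1}$ the coefficient in step~(iii) is of Bessel type, not Fourier--Jacobi: it is simply the twisted Jacquet module $J_{\psi_{n-1}}(\pi_\tau)$ along the unipotent radical $N_{n-1}$ with respect to a character $\psi_{n-1}$, with no Weil representation or Heisenberg group entering. The paper observes that this equals the ordinary Jacquet module of $\pi_\tau\otimes\psi_{n-1}^{-1}$; since $\psi_{n-1}$ is unramified the twist does not change depth, and then depth preservation under the ordinary Jacquet functor applies directly. So the ``main obstacle'' you anticipate does not arise in this case, and no hypothesis on $p$ is needed here.

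Second, the assumption that $p$ is sufficiently large is used in step~(iv), not step~(iii). What one needs is that the Moy--Prasad filtration on $L_{n-1}\simeq SO_{2n+1}(F)$ is obtained by intersecting the filtration on $M_{n-1}$ with $L_{n-1}$; the paper cites \cite{yuottawa}, 2.3.10.3 for this, which requires the residual characteristic to be large. Your claim that ``$H'_{y,r+}\subseteq H_{x,r+}$ for suitable matched points'' is exactly this compatibility, but it is not automatic for arbitrary embeddings and small $p$---this is the actual place where the hypothesis enters.
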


Before proving the theorem, 
let us review the local descent for the case $\mathbf G=SO_{2n+1}$, following \cite{js}.

Let $\tau$ be a self-dual supercuspidal representation of $GL_{2n}(F)$  that descends to $G=SO_{2n+1}(F)$ (rather than to an even orthogonal group). Denote $H=SO_{4n}(F)$.
First view $GL_{2n}(F)$ as the Siegel Levi of a parabolic $P$ in $H$ and take the normalized induction $\rho_\tau=\mathrm{Ind}_P^H \tau |\det|^{1/2}$. Let $\pi_\tau$ be the Langlands quotient of $\rho_\tau$.

The group $H=SO_{4n}(F)$ is the group of isometries of a $4n$-dimensional vector space $V$ preserving 
a symmetric bilinear form $b$. Fix maximal isometric subspaces $V^+$ and $V^-$ of $V$ in duality with respect to $b$. Also fix a maximal flag 
$0\subset V_1^+\subset V_2^+\subset\dots\subset V_n^+=V^+$ and a corresponding basis 
$\{e_1, \dots, e_n\}$ such that $V_i^+=\mathrm{Span} (e_1, \dots, e_i)$. Let $\{e_{-1}, \dots, e_{-n}\}$ be the dual basis of $V^-$ and denote $V_{i}^-=\mathrm{Span} (e_{-1}, \dots, e_{-i})$.
Finally, let $W=(V_{n-1}^+ + V_{n-1}^-)^\perp$.

Let $P_{n-1}=M_{n-1}N_{n-1}$ be the parabolic subgroup of $H$ stabilizing the flag 
$0\subset V_1^+\subset V_2^+\subset\dots\subset V_{n-1}^+$. Then $M_{n-1}$ is isomorphic
to $SO(W)\times GL_1(F)^{n-1}$, note that $SO(W)\simeq SO_{2n+2}(F)$.

Fix $\alpha\in F^\times$ and denote $y=e_{2n}-\alpha^2 e_{-2n}$. Let $L_{n-1}$ be 
the stabilizer of $y$ in $SO(W)$, we have 
$L_{n-1}=SO(\{y\}^\perp\cap W)\simeq SO_{2n+1}(F)(\subset SO_{2n+2}(F))$
(for more detail on this embedding, see Section \ref{depth restr}).
%Let $R_{n-1}$ be the semidirect product $L_{n-1}\ltimes N_{n-1}$.

Let $\psi$ be an unramified additive character of $F$ and let $\psi_{n-1}$ be the corresponding 
character of $N_{n-1}$. For a smooth representation $\pi$ of $H$
we shall denote by $J_{\psi_{n-1}}(\pi)$ the twisted Jacquet module of $\pi$ with respect to $N_{n-1}(F)$ and its character $\psi_{n-1}$, i.e., 
$J_{\psi_{n-1}}(\pi)=V_\pi / \mathrm{Span}\{\pi(a)v-\psi_{n-1}(a)v | a\in N_{n-1}, v\in V_\pi\}$. 
Note that, as a representation of the Levi $M_{n-1}$, this is the same as the (usual, non-twisted) Jacquet module 
$J(\pi\otimes \psi_{n-1}^{-1})=V_\pi / \mathrm{Span}\{\pi(a)\psi_{n-1}(a)^{-1}v-v | a\in N_{n-1}, v\in V_\pi\}$.

Finally, let $\sigma=\sigma_{\psi, n-1}(\tau)$ be the restriction of the twisted Jacquet module to $L_{n-1}$, $\sigma=\mathrm{Res}_{L_{n-1}}J_{\psi_{n-1}}(\pi_{\tau})$.

Then $\sigma$ is the descent of $\tau$ and it is an irreducible generic supercuspidal representation of $L_{n-1}\simeq SO_{2n+1}(F)$. 

Let us now prove Theorem \ref{descent-bound}; note that the local descent construction applies in the cases of other classical groups as well,
only generally it may happen that $\sigma$ is not irreducible, and the descent is in fact an irreducible subquotient of $\sigma$. 
The result and its proof are similar, so we deal only with the case of $G=SO_{2n+1}$ for simplicity of notation.

\begin{proof}
a) It suffices to prove the theorem only for supercuspidal representations, in general one proceeds as in the proof of Proposition \ref{rs}, part 3).

So let us take a supercuspidal self-dual representation $\tau$ of $GL_{2n}(F)$ that descends to $G=SO_{2n+1}(F)$.
The character $|\det|^{1/2}$ is trivial on $GL_{2n}(\mathcal O)$, and so it has depth zero, and hence $\depth\tau=\depth\tau |\det|^{1/2}$.
Since parabolic induction preserves depth and $\pi_\tau$ is an irreducible subquotient of $\rho_\tau=\mathrm{Ind}_P^H \tau |\det|^{1/2}$,
we see that $\depth\tau=\depth\pi_\tau$.

The character $\psi_{n-1}^{-1}$ is unramified, and so 
$\depth\pi_\tau=\depth(\pi_\tau\otimes \psi_{n-1}^{-1})$. Since depth is preserved under
taking a Jacquet quotient, we see that 
$\depth\tau=\depth(\pi_\tau\otimes \psi_{n-1}^{-1})=\depth (J_{\psi_{n-1}}(\pi_{\tau}))$,
where we consider $J=J_{\psi_{n-1}}(\pi_{\tau})$ as a representation of $M_{n-1}$.

We are assuming that the residual characteristic $p$ is large enough, and so the 
Moy-Prasad filtration on $L_{n-1}\simeq SO_{2n+1}(F)$ is obtained as the intersection of 
the filtration on $M_{n-1}\simeq SO_{2n+2}(F)$ with $L_{n-1}$ (see \cite{yuottawa}, 2.3.10.3).
Clearly, if $J$ has a vector fixed by some subgroup $(M_{n-1})_{x,r}$, then it has also a vector
fixed by $(M_{n-1})_{x,r}\cap L_{n-1}$. Thus $\depth \mathrm{Res}_{L_{n-1}}J\leq \depth J=\depth\tau$,
as we wanted to prove.

b) Easily follows from a).
\end{proof}

Let us note that the reason for why we get only one inequality towards depth preservation is the
last step of taking the restriction, which
may generally decrease the depth. It seems plausible that studying the local descent more closely can prove 
the full depth preservation for generic representations -- we outline a partial progress in this direction in Section \ref{depth restr}.

\section{Depth Preservation and Wildly Ramified Groups}\label{depth non}

In this section we give an example when depth is not preserved under 
Langlands reciprocity and functoriality. 
This occurs in some of the examples of residual characteristic $p=2$ discussed in Section 7.2 of \cite{cps}. 
The fact that depth need not be preserved when residual characteristic is small has already been observed by Gross-Reeder \cite{gross-reeder};
Reeder-Yu \cite{reeder-yu} recently provided more examples of this phenomenon.

Let $F=\mathbb Q_p$ and $E=\mathbb Q_p(\sqrt {p})$. Let $\mathbf G$ be the non-split, quasisplit orthogonal group $SO_{2n}^*$ attached to the quadratic extension
$E/F$, let $G=\mathbf G(F)$.

Let $\eta=\eta_{E/F}: F^\times \rightarrow \mathbb C$ be the quadratic character associated to $E/F$ by local class field theory.
Note that $\eta$ is defined by $\ker \eta=N_{E/F}(E^\times)$.

It is easy (and classical) to observe that $\eta$ is ramified and to compute its depth:

\begin{lemma}\label{eta}
$\eta$ is ramified. If $p>2$, then $\depth\eta=0$. If $p=2$, then $\depth\eta=2$.
\end{lemma}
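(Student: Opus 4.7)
The plan is to dispatch the ramification claim via local class field theory and then to translate the depth assertion into a computation of the conductor of $\eta$ using the explicit norm map $N_{E/F}$.

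Since $p \in F$ is a uniformizer and $x^2 - p$ is Eisenstein, $E = F(\sqrt{p})$ is totally ramified over $F$; local class field theory then forces $\eta$ to be nontrivial on $\mathcal{O}_F^\times$, so $\eta$ is ramified. For the depth, recall that for a character $\chi$ of $F^\times = GL_1(F)$ the Moy--Prasad subgroups collapse to $G_{x,0+} = 1 + \mathfrak{p}$ and $G_{x,n+} = 1 + \mathfrak{p}^{n+1}$ for integer $n \geq 1$, so $\depth \chi$ equals $\max(0, n_0 - 1)$, where $n_0$ is the smallest integer with $\chi$ trivial on $1 + \mathfrak{p}^{n_0}$. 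It therefore suffices to pin down the conductor exponent of $\eta$, which by local class field theory is the smallest $n_0$ with $1 + \mathfrak{p}^{n_0} \subseteq N_{E/F}(E^\times)$.

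For $p > 2$, I would apply Hensel's lemma to $f(x) = x^2 - a$ at $x_0 = 1$ for $a \in 1 + p\mathbb{Z}_p$: since $f'(1) = 2 \in \mathbb{Z}_p^\times$, every such $a$ is a square, hence a norm (as $b^2 = N_{E/F}(b)$). Thus $\eta|_{1 + \mathfrak{p}} \equiv 1$, and combined with the ramification this gives conductor exponent $1$ and depth $0$.

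For $p = 2$, the plan is to compute the image of the unit norm map directly. Any $\alpha \in \mathcal{O}_E^\times$ has the form $u + v\sqrt{2}$ with $u \in \mathbb{Z}_2^\times$ and $v \in \mathbb{Z}_2$, and $N(\alpha) = u^2 - 2v^2$. A brief case analysis modulo $8$, using $u^2 \equiv 1$ and $v^2 \equiv 0$ or $1 \pmod 8$, shows that the image of $N$ on units in $(\mathbb{Z}/8\mathbb{Z})^\times$ is precisely $\{1, 7\}$. Hence $1 + 8\mathbb{Z}_2 \subseteq N(\mathcal{O}_E^\times)$, while $5 \in 1 + 4\mathbb{Z}_2$ is not a norm, so the conductor exponent is $3$ and $\depth \eta = 2$. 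The only real subtlety is the modulo-$8$ case analysis; everything else is formal.
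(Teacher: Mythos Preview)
Your approach is essentially the same as the paper's: both identify the depth with (conductor $-\,1$) via the Moy--Prasad filtration on $GL_1$, use Hensel for $p>2$, and for $p=2$ check that $5$ is not a norm while $1+8\mathbb{Z}_2$ lies in the norm group. Your ramification argument via local class field theory for totally ramified extensions is cleaner than the paper's explicit non-square computation, and your mod-$8$ analysis is more detailed than the paper's bare assertion.

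One logical gap to tighten: from ``the image of $N$ on units in $(\mathbb{Z}/8\mathbb{Z})^\times$ is precisely $\{1,7\}$'' you write ``Hence $1+8\mathbb{Z}_2\subseteq N(\mathcal{O}_E^\times)$'', but the mod-$8$ computation alone only gives a containment $N(\mathcal{O}_E^\times)\subseteq \{x\equiv 1,7\pmod 8\}$, not the reverse. You need to combine this with the index-$2$ fact you already have from local class field theory (since $\eta$ is a ramified quadratic character, $[\mathbb{Z}_2^\times:N(\mathcal{O}_E^\times)]=2$), so that the two index-$2$ subgroups coincide. Alternatively, just note that $1+8\mathbb{Z}_2=(\mathbb{Z}_2^\times)^2$ consists of squares, hence of norms, which is closer to what the paper implicitly uses. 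Either one-line fix closes the gap.
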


\begin{proof}
We want to check that $\eta|\mathbb Z_p^\times\neq 1$. Take $a\in\mathbb Z_p^\times$ such that $a$ is not a square modulo $p$. 
We shall show that $a$ is not a norm. Assume false, i.e., we have $a=N(x+\sqrt {p} y)=x^2-py^2$ for some $x, y\in\mathbb Q_p$. Let $p^m$ be the common denominator of $x$ and $y$ and write
$x=p^{-m}b, y=p^{-m}c$ with $b, c\in\mathbb Z_p$ and $m\geq 0$. We obtain $p^{2m}a=b^2-pc^2$. 

If $m>0$, then $p$ divides $b$ and we can write $b=pd$ and we get $p^{2m-1}a=pd^2-c^2$. Thus $c=pe$ and 
$p^{2m-2}a=d^2-pe^2$. Proceeding in this way, we eventually obtain an equation with $m=0$.

Let's thus assume that $m=0$ and $a=b^2-pc^2$. We see that if $a$ is a norm, it has to be a square modulo $p$.

Hence $\eta$ is non-trivial on $\mathbb Z_p^\times$, i.e., $\eta$ is ramified.

The level $r$ subgroup $G_r$ of the Moy-Prasad filtration on $\mathbb Q_p^\times$ is just
$1+p^{\lceil r\rceil}\mathbb Z_p$, where $\lceil r\rceil$ is the smallest integer with $\lceil r\rceil\geq r$. 
We then have $G_{r+}=\bigcup_{s>r}G_s=1+p^{r+1}\mathbb Z_p$ for $r\in\mathbb N$, and
$G_{r+}=G_r=1+p^{\lceil r\rceil}\mathbb Z_p$ for $r\not\in\mathbb N$.

When $p>2$, we can see by Hensel's Lemma that each element of $1+p\mathbb Z_p$ is a norm, and so $\depth\eta=0$.
When $p=2$, every element of $1+8\mathbb Z_2$ is a norm, whereas 5 is not a norm. Hence
$\depth\eta=2$.
\end{proof}

Let now $p=2$.
Take any irreducible, admissible and generic representation $\pi$ of $G$ of depth zero and let $\Pi$ be its lift to $GL_{2n}(\mathbb Q_2)$.
The central character of $\Pi$ is $\eta$, and so $\depth\Pi\geq\depth\eta=2$.
In fact, in the explicit cases discussed in Section 7.2 of \cite{cps}, we see that  $\depth\Pi=2$ (take for example $\mu_1=\dots=\mu_n=1$, then $\Pi$ is just the induction
of the depth one character $(1, \dots, 1, \eta, 1, \dots, 1)$ with $\eta$ at the $n$-th coordinate,
of the torus $GL_1^{2n}(\mathbb Q_2)$).

Note that $\Pi$ is not unramified, even when $\pi$ was. This is no contradiction, 
as the fact that unramified representations lift to unramified ones (via Satake isomorphism) holds only for quasisplit groups which split over an unramified extension, i.e., an unramified group, which is not our case. To sum it up, we have:

\begin{corollary}\label{depth non-pres}
Let $\pi$ be an irreducible, admissible, generic, unramified representation of the ramified group $G=SO_{2n}^*(\mathbb Q_2)$ and $\Pi$ its functorial lift to $GL_{2n}(\mathbb Q_2)$.
Then $\Pi$ is not unramified and $\depth\pi=0<2\leq\depth\Pi$.
\end{corollary}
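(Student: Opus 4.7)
The plan is to assemble the corollary directly from the preceding material, using only Lemma \ref{eta}, Theorem \ref{functoriality} and the basic formalism of depth. By Theorem \ref{functoriality} and Table \ref{table funct}, any functorial lift $\Pi$ from $\mathbf G=SO^*_{2n}$ to $GL_{2n}$ has central character $\omega_\Pi = \chi_G = \eta_{E/F}$. Since we are taking $F=\mathbb Q_2$ and $E=\mathbb Q_2(\sqrt{p})=\mathbb Q_2(\sqrt 2)$, Lemma \ref{eta} applies and gives $\depth \eta_{E/F} = 2$.

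Next I would invoke the elementary fact that the depth of an irreducible admissible representation bounds the depth of its central character from above: if $\Pi$ has a nonzero vector fixed by some Moy-Prasad subgroup $(GL_{2n})_{x,r+}$, then $\omega_\Pi$ must be trivial on the intersection of this subgroup with the center, which controls the depth of $\omega_\Pi$ itself. Consequently $\depth \Pi \geq \depth \omega_\Pi = 2$. In particular, $\Pi$ cannot be unramified, since an unramified representation of $GL_{2n}(\mathbb Q_2)$ has a vector fixed by the maximal compact $GL_{2n}(\mathbb Z_2)$ and hence has unramified central character, which contradicts the ramification of $\eta_{E/F}$ established in Lemma \ref{eta}.

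Finally, $\depth \pi = 0$ is immediate from the assumption that $\pi$ is unramified, that is, that it has a nonzero vector fixed by a special maximal parahoric $G_{x,0}$, which forces $\pi^{G_{x,0+}} \neq 0$ and hence $\depth \pi = 0$. Combining the three assertions yields
\[
\depth \pi = 0 < 2 \leq \depth \Pi,
\]
together with the fact that $\Pi$ is ramified, which is exactly the statement of the corollary. The only step requiring any care is the inequality $\depth \Pi \geq \depth \omega_\Pi$; this is standard from the Moy-Prasad formalism but is the one place where one must be precise about which filtration subgroup is being used. Everything else is a direct quotation of results already proved in this section or in Section~\ref{backgr funct}.
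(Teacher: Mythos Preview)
Your argument is correct and is exactly the one the paper gives in the paragraph immediately preceding the corollary: the lift $\Pi$ has central character $\eta_{E/F}$, Lemma~\ref{eta} gives $\depth\eta_{E/F}=2$, hence $\depth\Pi\geq 2$, while $\pi$ unramified forces $\depth\pi=0$. One small caveat: Theorem~\ref{functoriality} as stated is a \emph{global} result, whereas the corollary is purely local; the fact that the local lift has central character $\eta_{E/F}$ really comes from the local Langlands picture and the embedding $^LG\hookrightarrow{}^LGL_{2n}$ described in Section~\ref{backgr funct}, not from the global theorem. The paper itself simply asserts this local fact without citation, so your proof is at least as detailed as the original on this point.
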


\section{Restriction and Depth}\label{depth restr}

In the local descent construction in the previous section, we obtained a representation $J_{\psi_{n-1,\alpha}}(\pi_{\tau})$ of 
$M_{n-1}\simeq SO_{2n+2}(F)\times (GL_1(F))^{n-1}$ and restricted it to $SO_{2n+1}(F)\subset SO_{2n+2}(F)$. The depth of the representation may have 
decreased in this step. The representation of $M_{n-1}$ is a product 
$\sigma\otimes \chi_1\otimes\cdots\otimes \chi_{n-1}$, where $\sigma$ is a representation of $SO_{2n+2}(F)$ and $\chi_i$ are characters of $GL_1(F)$.

Hence if we wanted to show that depth does not decrease under this restriction, 
first we would need to know what are the characters $\chi_i$, e.g., whether they are maybe always unramified in the descent to $SO_{2n+1}$. The author doesn't know this at present.

Second, we need to understand the restriction of $\sigma$ from $SO_{2n+2}(F)$ to $SO_{2n+1}(F)$. In this section we show that the depth
indeed does not change under this restriction. Let us discuss now the situation slightly more generally:

\

In the rest of this section let $F$ be a $p$-adic field, $p\neq 2$. We will consider the (split) orthogonal group of a quadratic space $V$. 
Let $V$ be an $N$-dimensional $F$-vector space. If $N=2n$, choose a basis $x_1, \dots, x_{n}, x_{-1}, \dots, x_{-n}$ of $V$ and equip 
$V$ with the quadratic form $Q(\sum_{i=-n}^{n} a_ix_i)=\sum_{i=1}^n a_i a_{-i}$.
If $N=2n+1$, choose a basis $x_0, x_1, \dots, x_{n}, x_{-1}, \dots, x_{-n}$ of $V$ and equip $V$ with the quadratic form $Q(\sum_{i=-n}^{n} a_ix_i)$ $=a_0^2+\sum_{i=1}^n a_i a_{-i}$.
In both cases $SO(V)=\{g\in SL(V) | gv=v$ for all $v\in V\}=SO_N(F)$.

We'll be considering the embeddings $SO_{2n-1}(F)\subset SO_{2n}(F)\subset SO_{2n+1}(F)$ defined as follows: Take a vector $v\in V$ such that $Q(v)\neq 0$ and take $W=\{v\}^\perp$. Then $SO(W)=SO_{N-1}(F)$ is the stabilizer of $v$ in $SO(V)$.

\

From now on assume that $N=2n$ is even and take $v=x_n-a^2x_{-n}$ for some $a\in F^\times$ (this is the situation that
occurs in the local descent from $GL_{2n-2}$ to $SO_{2n-1}$).

\begin{proposition}
Let $SO(W)=SO_{2n-1}(F)\subset SO(V)=SO_{2n}(F)$ be an embedding as above. Then there is a quadratic space $U\supset V$ with a quadratic form extending 
the one on $V$ (hence we will also denote it by $Q$) such that $SO(W)\times GL_1(F)$ is a Levi subgroup of $SO(U)=SO_{2n+1}(F)$ 
(where we take $SO(W)\subset SO(U)$ by the natural inclusion). Moreover, $GL_1(F)\cap SO(V)=\{1\}$.
\end{proposition}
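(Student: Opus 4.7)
The plan is to construct $U$ by adjoining one new anisotropic direction to $V$ so that the originally anisotropic vector $v$ gets absorbed into a hyperbolic plane. Concretely, I would set $U = V \oplus F x_0$ and extend $Q$ by $Q(x_0) = 1$ and $B(x_0, V) = 0$. Because $Q(x_0) = 1$ is a square, the resulting quadratic form on $U$ is isometric to the standard split form $a_0^2 + \sum_{i=1}^n a_i a_{-i}$ from Section~\ref{depth-backgr}, so $SO(U) \cong SO_{2n+1}(F)$, as required.

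The crucial point will be that $P := \mathrm{Span}(v, x_0)$ is a hyperbolic plane in $U$. Indeed, $Q(v) = -a^2$, $Q(x_0) = 1$, and $v \perp x_0$, so a vector $\alpha v + \beta x_0 \in P$ is isotropic exactly when $\beta = \pm a\alpha$. I would therefore choose an isotropic pair $y = v + a x_0$ and a suitable scalar multiple $y^*$ of $v - a x_0$, normalized so that $B(y, y^*) = 1$. Since $W = \{v\}^\perp \cap V$ is already orthogonal to both $v$ and $x_0$, its orthogonal complement in $U$ is still $W$, i.e.\ $P^\perp = W$. The stabilizer in $SO(U)$ of the isotropic line $F y$ is then the maximal parabolic $Q_y$ with Levi decomposition $M = GL(F y) \times SO(P^\perp) \cong GL_1(F) \times SO(W)$, which is exactly the Levi of $SO_{2n+1}(F)$ of type $GL_1 \times SO_{2n-1}$ claimed in the statement.

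For the final assertion $GL_1(F) \cap SO(V) = \{1\}$, I would use that the inclusion $SO(V) \hookrightarrow SO(U)$ is defined by extending $g \in SO(V)$ to fix $x_0$, so $SO(V) = \{g \in SO(U) : g x_0 = x_0\}$. An element $t \in GL_1(F)$ in the Levi acts by $y \mapsto ty$, $y^* \mapsto t^{-1} y^*$, and fixes $W$ pointwise; writing $x_0$ as an explicit $F$-linear combination of $y$ and $y^*$, the condition $t \cdot x_0 = x_0$ immediately forces $t = t^{-1} = 1$.

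I do not foresee a significant obstacle here: once the correct $U$ and hyperbolic plane are identified, everything reduces to standard Witt-theoretic linear algebra. The one point that needs a brief sanity check is that $U$ has the correct isometry class (split odd orthogonal rather than a non-split twist), which will be automatic from the choice $Q(x_0) = 1$ together with Witt cancellation applied to $V = W \oplus \langle v \rangle$.
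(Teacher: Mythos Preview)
Your proposal is correct and essentially identical to the paper's proof: your isotropic vector $y=v+ax_0$ is exactly the paper's $u_0=ax_0+x_n-a^2x_{-n}$, and identifying the Levi of the stabilizer of $Fy$ as $GL_1(F)\times SO(P^\perp)=GL_1(F)\times SO(W)$ is precisely what the paper does (somewhat more tersely). In fact you go slightly further, since you supply an argument for the final claim $GL_1(F)\cap SO(V)=\{1\}$, which the paper asserts but does not prove.
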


\begin{proof}
Take $U$ to be the vector space with basis $x_0, x_1, \dots, x_{n}, x_{-1}, \dots, x_{-n}$ 
(where $x_1, \dots, x_{n}, x_{-1}, \dots, x_{-n}$ is a basis of $V$ and $x_0\not\in V$),
extend the quadratic form to $U$ by $Q(\sum_{i=0}^{2n} a_ix_i)=a_0^2+\sum_{i=1}^n a_i a_{n+i}$.
Then $V$ is the orthogonal complement of $x_0$ in $U$, which gives $SO(V)=SO_{2n}(F)\subset SO(U)=SO_{2n+1}(F)$.

To show that $SO(W)\times GL_1(F)$ is a Levi in $SO(U)=SO_{2n+1}(F)$, we want to find a corresponding isotropic flag $0\subset U_0\subset U$
fixed by a parabolic $P$.
Let $U_0$ be the 1-dimensional subspace spanned by $u_0=ax_0+x_n-a^2x_{-n}$. We have $Q(u_0)=a^2-a^2=0$, and so $U_0$ is isotropic
and $SO(U_0)=GL_1(F)$. By definition, $SO(W)$ fixes $x_0$ and $x_n-a^2x_{-n}$, and so $SO(W)\times GL_1(F)\subset P$ is the Levi.
\end{proof}

\begin{proposition}\label{restr compact}
Let $\pi$ be an irreducible admissible representation of $G=SO_{2n}(F)$ $=SO(V)$ such that its restriction 
$\pi^\prime$ to $H=SO(W)=SO_{2n-1}(F)\subset G$ is also irreducible (and automatically admissible). Assume moreover that $\pi^\prime$ is supercuspidal and generic. 

a) Let $\chi$ be a unitary character of $F^\times$ such that $\chi^2\neq \mathbf 1$. Then $\mathrm{Ind}_{H\times GL_1(F)}^{G\times GL_1(F)} \pi^\prime\otimes \chi\simeq \pi\otimes \chi$.

b) Let $K$ be a compact open subgroup of $G$. Then $\pi^K\neq \{0\}$ if and only if $\pi^{\prime K\cap H}\neq \{0\}$.
\end{proposition}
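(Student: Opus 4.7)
For part (a), the plan is to apply the irreducibility criterion of Lemma \ref{lemma induction} to the maximal parabolic subgroup $P$ of $SO(U) = SO_{2n+1}(F)$ whose Levi is $M = H \times GL_1(F) \simeq SO_{2n-1}(F) \times GL_1(F)$, as constructed in the preceding proposition. Since $\pi'$ is an irreducible unitary generic supercuspidal representation of $H$ and $\chi^2 \neq \mathbf{1}$, Lemma \ref{lemma induction} yields the irreducibility of the parabolic induction $\mathrm{Ind}_P^{SO(U)}(\pi' \otimes \chi)$. I would then identify this irreducible representation with the natural extension of $\pi \otimes \chi$ via Frobenius reciprocity: the hypothesis $\pi|_H = \pi'$ furnishes an $M$-equivariant embedding $\pi' \otimes \chi \hookrightarrow (\pi \otimes \chi)|_M$, which produces a nonzero intertwiner from the induced representation to $\pi \otimes \chi$, and irreducibility of both sides forces this intertwiner to be an isomorphism.

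For part (b), the forward direction is immediate, since any $v \in \pi^K$ lies in the same underlying space as $\pi'$ and is \emph{a fortiori} fixed by $K \cap H$. For the reverse direction, I would invoke part (a) as follows. Fix a unitary character $\chi$ of $F^\times$ with $\chi^2 \neq \mathbf{1}$ and a compact open subgroup $L \subset \ker \chi$ of $F^\times$ (such $L$ exists by smoothness of $\chi$). A nonzero vector $v \in (\pi')^{K \cap H}$ paired with the trivial action of $L$ on $\chi$ produces a nonzero $((K \cap H) \times L)$-fixed vector in $\pi' \otimes \chi$. Applying the standard computation of $(K \times L)$-fixed vectors in a parabolic induction (via Iwasawa/Mackey decomposition with respect to $P$), this vector in turn gives a nonzero $(K \times L)$-fixed vector in $\mathrm{Ind}_P^{SO(U)}(\pi' \otimes \chi)$, which by part (a) is identified with $\pi \otimes \chi$. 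Projecting off the one-dimensional $\chi^L$ then yields a nonzero element of $\pi^K$.

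The main obstacle I anticipate is the matching step in part (a), namely identifying the abstract irreducible induced representation precisely with $\pi \otimes \chi$. The irreducibility is essentially a direct application of Lemma \ref{lemma induction}, but the identification requires exploiting the two distinct roles of $H$ carefully: as a subgroup of $G$ witnessing the restriction $\pi|_H = \pi'$, and as a direct factor of the Levi $M \subset SO(U)$ appearing in the induction. A secondary technical point is verifying that the fixed-vector transfer in part (b) passes cleanly through the $GL_1$-twist, but this should reduce to choosing $L$ sufficiently small inside $\ker \chi$ so that the $(K \times L)$-fixed vector analysis is not obstructed by the character.
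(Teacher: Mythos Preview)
Your overall strategy matches the paper's: use Lemma~\ref{lemma induction} to get irreducibility, then Frobenius reciprocity to identify the induction with $\pi\otimes\chi$, and deduce (b) from (a) by transferring fixed vectors through the $GL_1$-twist. However, there is a genuine gap in part (a) that propagates into (b).

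The representation $\mathrm{Ind}_P^{SO(U)}(\pi'\otimes\chi)$ whose irreducibility you establish is a representation of $SO(U)=SO_{2n+1}(F)$, whereas $\pi\otimes\chi$ is a representation of $G\times GL_1(F)=SO_{2n}(F)\times GL_1(F)$. These are not the same group, so you cannot ``identify'' one with the other, and your Frobenius map cannot land in $\pi\otimes\chi$ from the $SO(U)$-induction. What your Frobenius argument actually produces is a nonzero intertwiner between $\mathrm{Ind}_{H\times GL_1(F)}^{G\times GL_1(F)}(\pi'\otimes\chi)$ and $\pi\otimes\chi$; to conclude these are isomorphic you need the former to be irreducible, and that is precisely what you have not shown. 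The paper fills this gap by induction in stages:
\[
\mathrm{Ind}_{H\times GL_1(F)}^{SO(U)}(\pi'\otimes\chi)\;=\;\mathrm{Ind}_{G\times GL_1(F)}^{SO(U)}\,\mathrm{Ind}_{H\times GL_1(F)}^{G\times GL_1(F)}(\pi'\otimes\chi),
\]
so irreducibility of the left side forces the inner induction $\mathrm{Ind}_{H\times GL_1(F)}^{G\times GL_1(F)}(\pi'\otimes\chi)$ to be irreducible as well. Only then does the Frobenius argument (exactly as you wrote it) give the isomorphism with $\pi\otimes\chi$.

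The same conflation recurs in your part (b): you produce a $(K\times L)$-fixed vector in $\mathrm{Ind}_P^{SO(U)}(\pi'\otimes\chi)$ and then invoke part (a), but part (a) is a statement about $\mathrm{Ind}_{H\times GL_1(F)}^{G\times GL_1(F)}(\pi'\otimes\chi)$, not about the $SO(U)$-induction. The paper's argument for (b) works entirely inside $G\times GL_1(F)$: one passes from $(\pi')^{K\cap H}\otimes\chi^{K_0}\neq 0$ to $\bigl(\mathrm{Ind}_{H\times GL_1(F)}^{G\times GL_1(F)}(\pi'\otimes\chi)\bigr)^{K\times K_0}\neq 0$, and then part (a) gives $(\pi\otimes\chi)^{K\times K_0}=\pi^K\neq 0$. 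Once you insert the induction-in-stages step in (a) and keep (b) at the level of $G\times GL_1(F)$, your argument becomes the paper's.
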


\begin{proof}
a) Take $U$ as in the previous proposition so that $H\times GL_1(F)$ is a Levi subgroup of $SO(U)=SO_{2n+1}(F)$ 
, where we take $H=SO(W)\subset G=SO(V)\subset SO(U)$ by the natural inclusions. 
Consider the parabolic induction $\mathrm{Ind}_{H\times GL_1(F)}^{SO(U)} \pi^\prime$ $\otimes \chi$. By Lemma \ref{lemma induction},
this induced representation is irreducible.

Now $$\mathrm{Ind}_{H\times GL_1(F)}^{SO(U)} \pi^\prime \otimes \chi=\mathrm{Ind}_{G\times GL_1(F)}^{SO(U)} \mathrm{Ind}_{H\times GL_1(F)}^{G\times GL_1(F)}\pi^\prime \otimes \chi$$ 
is irreducible, and so 
$\mathrm{Ind}_{H\times GL_1(F)}^{G\times GL_1(F)}\pi^\prime \otimes \chi$ is also irreducible.

By Frobenius reciprocity we get that $$\mathrm{Hom}_{G\times GL_1(F)}(\pi\otimes \chi, \mathrm{Ind}_{H\times GL_1(F)}^{G\times GL_1(F)}\pi^\prime \otimes \chi)\simeq \mathrm{Hom}_{H\times GL_1(F)}(\pi^\prime \otimes \chi, \pi^\prime \otimes \chi),$$ because the restriction of $\pi \otimes \chi$ to $H\times GL_1(F)$ is $\pi^\prime \otimes \chi$. Since $\pi^\prime \otimes \chi$, $\pi\otimes \chi$, and $\mathrm{Ind}_{H\times GL_1(F)}^{G\times GL_1(F)}\pi^\prime \otimes \chi$ are all irreducible, we see that 
$\mathrm{Ind}_{H\times GL_1(F)}^{G\times GL_1(F)} \pi^\prime\otimes \chi\simeq \pi\otimes \chi$.

b) If $\pi^K\neq\{0\}$, then take a non-zero vector $v$ fixed by $K$. Then clearly $v\in \pi^{\prime K\cap H}$.

Assume now that $\pi^{\prime K\cap H}\neq \{0\}$. 
Let $K_0$ be an open compact subgroup of $F^\times$ such that $\chi(K_0)=1$.
%Then by a) we have $\left (\pi \otimes \chi\right )^{K\times K_0}=\left (Ind_{H\times GL_1(F)}^{G\times GL_1(F)}\pi^{\prime} \otimes \chi\right )^{K\times K_0}$.
%It is a general fact from representation theory that this set of fixed vectors of the induced representation is a finite direct product 
%$\prod (\pi^{\prime} \otimes \chi)^{(H\cap gKg^{-1})\times K_0}$, where the product is over a set $\{g\}$ of representatives of $H\backslash G/K$, one of which we can choose to be the identity 1. 
%Hence $\left (\pi \otimes \chi\right )^{K\times K_0}$ surjects onto 
Then using part a) we have
$$\{0\}\neq 
\mathrm{Ind}_{(K\cap H)\times K_0}^{K\times K_0}\pi^{\prime K\cap H} \otimes \chi^{K_0}=
\left (\mathrm{Ind}_{H\times GL_1(F)}^{G\times GL_1(F)}\pi^{\prime} \otimes \chi\right )^{K\times K_0}=\left (\pi \otimes \chi\right )^{K\times K_0}.$$ 
Since $\chi(K_0)=1$, the space $\left(\pi \otimes \chi\right )^{K\times K_0}$ is equal to $\pi^K$,
and so $\pi^K$ is also non-zero.
\end{proof}

%
%  revised  introduction.tex  2011-09-02  Mark Senn  http://engineering.purdue.edu/~mark
%  created  introduction.tex  2002-06-03  Mark Senn  http://engineering.purdue.edu/~mark
%
%  This is the introduction chapter for a simple, example thesis.
%

\chapter{Upper Bound}

In this chapter we prove an upper bound on the number $N^K_{\mathrm{sd}}(\lambda)$ of self-dual cuspforms on $GL_N(\mathbb{A_Q})$.
The idea of the proof is to consider the functorial descent of each self-dual $\Pi$ to an appropriate classical group. 
First we collect some information about functoriality in Section \ref{upper tools}. The proof is then in Section \ref{upper proof}.

Throughout this chapter, all groups and representations are adelic, unless stated otherwise.

\section{Tools}\label{upper tools}

Throughout this chapter, let $K=K_\infty\times \prod K_p$ be a compact subgroup of $GL_N(\mathbb A)$ with $K_\infty=O(N)$.
Mostly we will be working with the principal congruence subgroup $K=K(M)$,
for which $K_p$ is the group of matrices $A\in GL_N(\mathcal O_p)$ satisfying $A\equiv I\pmod {p^{m(p)}}$
(when $m(p)=0$ this just means that $K_p=GL_N(\mathcal O_p))$, where $M=\prod p^{m(p)}$.

Let us first establish global analogues of the local results from Chapter 4.

\begin{proposition}\label{upper K fixed}
a) For every $K$ there is a constant $C_K$ (depending only on $K$) such that for every automorphic representation $\Pi$ of $GL_N(\mathbb A)$ one has $\dim\Pi^K\leq C_K$.

b) Let $\mathbf G$ be a quasisplit classical group.
For each $M\in\mathbb N$ there exists an arithmetic compact subgroup $K^\prime\subset G(\mathbb{A})$ such that:

For each self-dual cuspidal automorphic representation $\Pi$ of $GL_N(\mathbb {A})$ which descends to a cuspidal automorphic representation $\pi$ of $G(\mathbb {A})$ 
and satisfies $\Pi^{K(M)}\neq 0$, one has $\pi^{K^\prime}\neq 0$.
\end{proposition}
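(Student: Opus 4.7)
The plan is to reduce both parts to place-by-place local estimates using the restricted tensor product factorization $\Pi=\bigotimes_v'\Pi_v$, which gives $\Pi^K=\bigotimes_v \Pi_v^{K_v}$ and likewise $\pi^{K'}=\bigotimes_v \pi_v^{K'_v}$ for the descent $\pi$ of $\Pi$.

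For (a), I will bound each factor $\dim \Pi_v^{K_v}$ uniformly in $\Pi$. At the archimedean place, $(GL_N(\mathbb R), O(N))$ is a Gelfand pair, so $\dim \Pi_\infty^{O(N)}\leq 1$ for every irreducible admissible $\Pi_\infty$. At each finite place $p$, Bernstein's uniform admissibility theorem (Theorem \ref{Bernstein}(b)) provides a bound $n_p$ on $\dim \Pi_p^{K_p}$ depending only on $K_p$. At all but finitely many $p$ one has $K_p=GL_N(\mathcal O_p)$, and an unramified irreducible admissible $\Pi_p$ has a one-dimensional space of $K_p$-fixed vectors, so $n_p=1$ at such places. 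Taking $C_K=\prod_p n_p$ (a finite product) yields the required uniform bound.

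For (b), I construct $K'=K'_\infty\times\prod_p K'_p$ place by place. Write $M=\prod p^{m(p)}$. At each finite $p$ I apply Corollary \ref{depth cor} with $m=m(p)$ to produce a compact open $K'_p\subset G(\mathbb Q_p)$, depending only on $m(p)$ and $\mathbf G$, such that $\Pi_p^{K_{m(p)}}\neq 0$ together with $\Pi_p$ descending to $\pi_p$ force $\pi_p^{K'_p}\neq 0$. For almost all $p$ (those with $m(p)=0$ at which $\mathbf G$ is also unramified) I take $K'_p$ to be the explicit Moy--Prasad subgroup $G_r$ of Proposition \ref{bound-depth}(a), where $r$ is obtained from Proposition \ref{rs}(a) applied with input depth $0$; since this $r$ is controlled independently of $p$ for unramified $\mathbf G$, the subgroup $G_r$ is commensurable with the hyperspecial maximal compact $\mathbf G(\mathcal O_p)$, so the resulting $K'$ is arithmetic in the sense defined after Proposition \ref{adelic donnelly}. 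At the archimedean place I take $K'_\infty$ to be a maximal compact of $G(\mathbb R)$; the hypothesis $\Pi_\infty^{O(N)}\neq 0$ forces $\varphi_\Pi : W_\mathbb R \to GL_N(\mathbb C)$ to factor through the diagonal torus, and since $\varphi_\Pi = \iota\circ \varphi_\pi$ with $\iota: {}^LG \hookrightarrow {}^LGL_N$ a closed embedding that sends maximal tori into maximal tori, $\varphi_\pi$ must likewise factor through a maximal torus of $G^\vee$, whence $\pi_\infty$ is $K'_\infty$-spherical.

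The main obstacle will be ensuring arithmeticity of $K'$: I need $K'_p$ to be commensurable with a hyperspecial maximal compact at almost all $p$. This reduces to a uniform (in $p$) bound on $\depth\pi_p$ at unramified places, obtained by feeding $\depth\Pi_p=0$ into Proposition \ref{rs}(a) and noting that the resulting $r$ is controlled independently of $p$ as long as $\mathbf G$ is unramified at $p$. A secondary delicate point is the archimedean sphericity step: while the compatibility of tori under $\iota$ is explicit for the standard symplectic and special orthogonal embeddings into $GL_N$, some care is needed to verify that sphericity of $\Pi_\infty$ truly forces sphericity of $\pi_\infty$ rather than of some other member of its $L$-packet.
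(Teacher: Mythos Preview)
Your argument for part (a) is essentially identical to the paper's: factor $\Pi^K=\bigotimes_v\Pi_v^{K_v}$, use that $\dim\Pi_v^{K_v}\le 1$ at the archimedean place and at all unramified finite places, and invoke Bernstein's uniform admissibility (Theorem~\ref{Bernstein}(b)) at the finitely many remaining places.

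For part (b) your overall strategy also matches the paper's (build $K'$ place by place, apply Corollary~\ref{depth cor} at the bad places, take a maximal compact at infinity), but you take an unnecessary detour at the good finite places. At primes $p$ with $m(p)=0$, the paper simply observes that $\Pi_p$ is unramified, hence $\pi_p$ is unramified (unramified local functoriality via the Satake isomorphism), and takes $K'_p$ to be a hyperspecial (or special) maximal compact subgroup such as $\mathbf G(\mathcal O_p)$. This immediately makes $K'$ arithmetic and completely avoids the uniformity issue you flagged. Your route through Proposition~\ref{rs}(a) and Proposition~\ref{bound-depth}(a) is not wrong in spirit, but Proposition~\ref{rs} is stated for a \emph{fixed} local field, so your assertion that the resulting $r$ ``is controlled independently of $p$'' is not justified by the paper's results as written; the paper's direct argument sidesteps this entirely.

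At the archimedean place the paper is terser than you are: it simply sets $K'_\infty$ to be a maximal compact subgroup of $\mathbf G(\mathbb R)$ adapted to a standard Borel, implicitly using that a spherical $\Pi_\infty$ forces $\pi_\infty$ to be spherical. Your torus-factorization sketch and your caveat about which member of the $L$-packet is actually spherical are reasonable observations; the paper does not address this subtlety explicitly.
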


\begin{proof}
a) Take $\Pi$ such that $\Pi^K\neq 0$ and write it as a restricted tensor product $\Pi=\otimes\Pi_v$. 
Then $\Pi^K=\prod \Pi_v^{K_v}$, and so $\dim\Pi^K=\prod \dim\Pi_v^{K_v}$.
When $K_v$ is maximal and $\Pi_v$ is unramified (or when $v=\infty$), then $\dim\Pi_v^{K_v}=1$, and so this is just a finite product.

At each of the remaining finitely many places $v=p$, we have a constant $c_p=c_{p, K_p}$ (independent of $\Pi_p$) from Theorem \ref{Bernstein} such that $\dim\Pi_p^{K_p}\leq c_p$.
Let $C_K$ be the product of $c_p$ over these ramified places. Then $\dim\Pi^K\leq C_K$ and $C_K$ depends only on $K$.

b) Take $\Pi=\otimes\Pi_v$ such that $\Pi^K\neq 0$ and which descends to a cuspidal automorphic representation $\pi$ of $\mathbf G(\mathbb {A_Q})$.
When $K_p$ is maximal, $\Pi_p$ is unramified. Thus also $\pi_p$ is unramified and we can set $K^\prime_p$ to be a special maximal compact subgroup adopted to a standard minimal parabolic so that we have the corresponding Iwasawa decomposition (when $\mathbf G$ is split, we can take $K^\prime_p=\mathbf G(\mathcal O_p)$). Likewise, we can define $K^\prime_\infty$ to be a maximal compact subgroup of $\mathbf G(\mathbb R)$ (again chosen suitable to provide Iwasawa decomposition).

At the remaining places $v=p$, we use Corollary \ref{depth cor}, which gives us a 
compact open subgroup $K_p^\prime\subset G$ with $\pi_p^{K^\prime_p}\neq 0$. 
Then $K^\prime=K^\prime_\infty\times\prod K^\prime_p$ satisfies the conditions of the proposition.
\end{proof}

Let us next discuss to which classical groups $\mathbf G$ can $\Pi$ descend.
When $N=2n+1$ is odd, the only possibility is $\mathbf G=Sp_{2n}$. But when $N=2n$ is even, $\mathbf G$ can be split $SO_{2n}$ or $SO_{2n+1}$, 
or one of the infinitely many non-split quasisplit groups $SO_{2n}^\ast$. However, for a fixed compact subgroup $K$, only finitely many of these can occur:

\begin{lemma}\label{upper sk}
Let $K=K(M)$. There is a finite set $S_K$ of quasisplit classical groups $\mathbf G$ such that every self-dual cuspidal automorphic representation $\Pi$ of $GL_N(\aq)$ with $\Pi^K\neq\{0\}$ descends to some $\mathbf G\in S_K$. 
\end{lemma}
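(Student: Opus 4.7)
The plan is to reduce to the essentially nontrivial case and then invoke class field theory to bound the possibilities for the associated quadratic character. When $N=2n+1$ is odd, Table \ref{table funct} shows the only option is $\mathbf G=Sp_{2n}$, so the lemma is trivial. When $N=2n$ is even, $\mathbf G$ is either $SO_{2n+1}$, the split $SO_{2n}$, or a quasisplit non-split form $SO_{2n}^{*,\tau}$ for some $\tau\in\mathbb Q^\times$ with $-\tau$ a non-square. The first two possibilities account for two fixed groups, so it suffices to show that only finitely many classes of $\tau$ can arise.

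Here I will use the central character constraint from Theorem \ref{functoriality}: if $\Pi$ descends to $SO_{2n}^{*,\tau}$, then $\omega_\Pi=\eta_{\mathbb Q(\sqrt{-\tau})/\mathbb Q}$, which is the quadratic Hecke character of $\mathbb A^\times/\mathbb Q^\times$ corresponding via class field theory to the quadratic extension $\mathbb Q(\sqrt{-\tau})/\mathbb Q$. First I would observe that $\Pi^{K(M)}\neq\{0\}$ forces $\omega_\Pi$ to be trivial on $Z(GL_N(\mathbb A))\cap K(M)$, where $Z$ denotes the center of scalar matrices. At each prime $p$, the corresponding constraint is that $\omega_{\Pi,p}$ is trivial on $1+p^{m(p)}\mathcal O_p\subset\mathbb Q_p^\times$. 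In particular, at any prime $p\nmid M$ we have $m(p)=0$, so $\omega_{\Pi,p}$ must be unramified.

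Thus the quadratic character $\eta_{\mathbb Q(\sqrt{-\tau})/\mathbb Q}$ is unramified outside the finite set of primes dividing $M$ (and possibly the archimedean place). By class field theory, such a quadratic character corresponds to a quadratic extension of $\mathbb Q$ which is unramified outside a fixed finite set of places, and there are only finitely many such extensions (each such extension is of the form $\mathbb Q(\sqrt{d})$ for $d$ a squarefree integer whose prime divisors lie in a fixed finite set, together with the sign). Consequently only finitely many values of $\tau$ modulo squares occur, hence only finitely many groups $SO_{2n}^{*,\tau}$ can appear as the descent target.

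Assembling this for each even $N=2n$ (with $n\le \lfloor N/2\rfloor$ fixed by the ambient $GL_N$), we obtain a finite list $S_K$ consisting of $Sp_{2n}$ (if $N=2n+1$), or $SO_{2n+1}$, $SO_{2n}$, together with the finitely many $SO_{2n}^{*,\tau}$ permitted by the ramification bound (if $N=2n$). The only step requiring real content is the ramification argument together with the finiteness of quadratic extensions unramified outside a fixed finite set; everything else is bookkeeping based on Theorem \ref{functoriality}.
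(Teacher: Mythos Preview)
Your argument is correct and follows the same route as the paper: reduce to the even case, use that the central character of $\Pi$ equals $\eta_{\mathbb Q(\sqrt{-\tau})/\mathbb Q}$ (Theorem \ref{functoriality}), observe that $\Pi^{K(M)}\neq 0$ bounds the conductor/ramification of this character by $M$, and then invoke class field theory to get finitely many possible quadratic characters (hence finitely many $SO_{2n}^{*,\tau}$). The paper's version is terser, phrasing the bound as ``the conductor of $\chi$ divides $M$,'' but the content is identical.
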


\begin{proof}
We just need to consider the case when $N=2n$. Let $\Pi$ be a self-dual cuspidal automorphic representation of $GL_N(\aq)$ with $\Pi^K\neq\{0\}$ and let $\chi$ be the central character of $\Pi$.
This is a character of the center $Z\simeq GL_1(\aq)$ of $GL_N(\aq)$.
Assume moreover that $\Pi$ descends to one of the non-split groups $SO_{2n}^\ast$. This group is then uniquely determined by the central character $\chi$.

We have $\chi(Z\cap K)=1$, which implies that the conductor of $\chi$ divides $M$.
By class field theory, there are only finitely many characters with conductor dividing $M$. Thus there are also only finitely many possibilities for the group $SO_{2n}^\ast$.
\end{proof}

\section{Statement and Proof of Theorem \ref{upper upper}}\label{upper proof}

We are now ready to prove the upper bound. 

For $\lambda>0$ we have defined
$N^K(\lambda)=N_{GL(N)}^K(\lambda)=\sum_{\lambda(\Pi)\leq\lambda} \dim \Pi^K$, where the sum is over
all cuspidal automorphic representations $\Pi$ of $GL_N(\mathbb A)$ such that the
Laplacian eigenvalue $\lambda(\Pi)$ is at most $\lambda$. 
Similarly define $N^K_{\mathrm{sd}}(\lambda)$ to be a similar sum $\sum_{\lambda(\Pi)\leq\lambda} \dim\Pi^K$, but this time ranging only over self-dual representations $\Pi$.

\begin{theorem}\label{upper upper}
Let $K=K(M)$.
There are positive constants $c$ and $\lambda_0$ (both depending on $K$) such that $N^K_{\mathrm{sd}}(\lambda)\leq c\lambda^{d/2}$ for all $\lambda>\lambda_0$.
Here $d=n^2+n$ for $N=2n+\varepsilon$ with $\varepsilon=0, 1$.
\end{theorem}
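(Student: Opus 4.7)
The plan is to decompose $N^K_{\mathrm{sd}}(\lambda)$ according to the quasisplit classical group $\mathbf{G}$ to which a given self-dual $\Pi$ descends. By Lemma \ref{upper sk}, only finitely many groups $\mathbf{G}$ in the set $S_K$ occur, so we can write
$$N^K_{\mathrm{sd}}(\lambda) = \sum_{\mathbf{G} \in S_K} N^K_{\mathrm{sd}, \mathbf{G}}(\lambda),$$
where $N^K_{\mathrm{sd}, \mathbf{G}}(\lambda)$ counts (with multiplicity $\dim \Pi^K$) only those $\Pi$ that descend to $\mathbf{G}$. It is enough to bound each summand by $c_{\mathbf{G}}\lambda^{d_{\mathbf{G}}/2}$, since the claimed exponent $d = n^2 + n$ is the maximum of $d_{\mathbf{G}}$ over $\mathbf{G} \in S_K$ (from the dimension table: $Sp_{2n}$ when $N = 2n+1$, and $SO_{2n+1}$ when $N = 2n$).

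For a fixed $\mathbf{G} \in S_K$, I would use generic functoriality \cite{cps}, \cite{grs} to map each such $\Pi$ to its globally generic descent $\pi$ on $\mathbf{G}(\mathbb{A})$; this map is injective because $\Pi$ is the isobaric sum determined by the $L$-function data of $\pi$ (Theorem \ref{functoriality}). By Theorem \ref{main laplace} the Laplacian eigenvalue transfers as $\lambda(\Pi) = c_{\mathbf{G}} \lambda(\pi) + d_{\mathbf{G}}$, so $\lambda(\Pi) \leq \lambda$ forces $\lambda(\pi) \leq c_{\mathbf{G}}' \lambda$ for a suitable constant. By Proposition \ref{upper K fixed} b), there is an arithmetic compact subgroup $K' = K'(K, \mathbf{G})$ such that $\pi^{K'} \neq 0$, and by Proposition \ref{upper K fixed} a) we have $\dim \Pi^K \leq C_K$ uniformly. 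Combining these observations,
$$N^K_{\mathrm{sd}, \mathbf{G}}(\lambda) \leq C_K \cdot \#\{\pi \text{ cuspidal on } \mathbf{G}(\mathbb{A}) : \lambda(\pi) \leq c_{\mathbf{G}}' \lambda,\ \pi^{K'} \neq 0\} \leq C_K \cdot N_{\mathbf{G}}^{K'}(c_{\mathbf{G}}' \lambda).$$

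To bound $N_{\mathbf{G}}^{K'}(c_{\mathbf{G}}' \lambda)$ I would invoke the adelic Donnelly upper bound (Proposition \ref{adelic donnelly}), which applies because $K'$ is arithmetic and $\mathbf{G}^1(\mathbb{R})$ is non-compact for every $\mathbf{G} \in S_K$ other than the degenerate $SO_2^\ast$ (which can be discarded, since it contributes only boundedly many $\Pi$). This yields $N_{\mathbf{G}}^{K'}(c_{\mathbf{G}}' \lambda) \leq c''_{\mathbf{G}} \lambda^{d_{\mathbf{G}}/2}$ for $\lambda$ large. Summing over the finite set $S_K$ and using $d_{\mathbf{G}} \leq d = n^2 + n$, one obtains $N^K_{\mathrm{sd}}(\lambda) \leq c\lambda^{d/2}$ for some $c > 0$ and all $\lambda$ sufficiently large.

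The main obstacle is essentially bookkeeping rather than a deep point: one must ensure that every self-dual $\Pi$ with $\Pi^K \neq 0$ really does appear as a generic lift from some $\mathbf{G}$ in the \emph{finite} set $S_K$ (via Lemma \ref{upper sk} and the classification from Theorem \ref{functoriality}), that the transfer of the Laplacian eigenvalue is controlled by a constant independent of $\Pi$, and that the existence of a $K'$-fixed vector on the descent side can be guaranteed by a single $K'$ depending only on $K$ --- this last point is exactly the content of the local depth analysis of Chapter 4 packaged through Corollary \ref{depth cor} and its global version, Proposition \ref{upper K fixed} b). Once these inputs are in hand, the injectivity of the generic descent together with Donnelly's bound closes the argument without needing the full (conditional) results of \cite{arthur}.
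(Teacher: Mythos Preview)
Your proposal is correct and follows essentially the same route as the paper: decompose over the finite set $S_K$ from Lemma \ref{upper sk}, use Proposition \ref{upper K fixed} a) to bound $\dim\Pi^K$ uniformly, Proposition \ref{upper K fixed} b) to produce a single arithmetic $K'$ on each $\mathbf G$, Theorem \ref{main laplace} to transfer the Laplacian bound, and then apply the adelic Donnelly upper bound (Proposition \ref{adelic donnelly}). The only cosmetic difference is the treatment of the degenerate case: the paper disposes of $N=2$ at the outset by invoking Weyl's Law for $GL_2$ directly (so that $SO_2^\ast$ never enters), whereas you keep $SO_2^\ast$ in $S_K$ and argue separately that it contributes a bounded amount --- both are fine, though your ``boundedly many $\Pi$'' remark deserves one more sentence (e.g.\ $K_\infty$-sphericality forces $\pi_\infty$ trivial on the compact $SO_2^\ast(\mathbb R)$, hence $\lambda(\pi)=0$ and finiteness follows from the level constraint).
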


\begin{proof}
First of all, when $N=2$, we can just use the upper bound from (full) Weyl's Law for $GL_2$, since the dimension $d$ is 2 in both cases. Assume from now on that $N\neq 2$.

We need to estimate $$N^K_{\mathrm{sd}}(\lambda)=\sum_{\lambda(\Pi)\leq\lambda,\ \Pi\mathrm{\ sd}} \dim\Pi^K.$$ 

By Lemma \ref{upper sk}, there is a finite set $S=S_K$ such that each of our self-dual $\Pi$ descends to some $\mathbf G\in S$.
Hence we can write $N^K_{\mathrm{sd}}(\lambda)=\sum_{\mathbf G\in S}N^K_{\mathrm{sd}, \mathbf G}(\lambda)$, where $N^K_{\mathrm{sd}, \mathbf G}(\lambda)$ is the sum over all $\Pi$ which descend to $\mathbf G$. 
It thus suffices to obtain an upper bound for each $N^K_{\mathrm{sd}, \mathbf G}(\lambda)$.

By Proposition \ref{upper K fixed}a), there is $c_1$ such that $\dim\Pi^K\leq c_1$ for each such $\Pi$. Hence 
$$N^K_{\mathrm{sd}, \mathbf G}(\lambda)\leq c_1\cdot\#\{\Pi\ |\ \lambda(\Pi)\leq\lambda,\ \Pi^K\neq 0,\ \Pi\mathrm{\ descends\ to\ }\mathbf G\}.$$

Let $\Pi$ be such that $\lambda(\Pi)\leq\lambda,\ \Pi^K\neq 0$, and $\Pi$ descends to $\mathbf G$, and let $\pi$ be its descent, which is a cuspidal automorphic representation of $\mathbf G(\aq)$. 
By Theorem \ref{main laplace}, the Laplacian eigenvalue of $\Pi$ is $\lambda(\Pi)=c_G\lambda(\pi)+d_G$. Hence there is $c_2$ such that for sufficiently large $\lambda$ we have $\lambda(\pi)\leq c_2\lambda$.
By Proposition \ref{upper K fixed}b), there is a compact subgroup $K^\prime$ of $\mathbf G(\aq)$ such that $\pi^{K^\prime}\neq 0$. (Note that neither $c_2$ nor $K^\prime$ depend on $\Pi$.)

Thus we see that 
$$\#\{\Pi\ |\ \lambda(\Pi)\leq\lambda,\ \Pi^K\neq 0,\Pi\mathrm{\ descends\ to\ }\mathbf G\}\leq$$ 
$$\leq\#\{\pi\ |\ \pi\mathrm{\ repre\-sen\-tation\ of\ }\mathbf G(\aq),\ \lambda(\pi)\leq c_2\lambda,\ \pi^{K^\prime}\neq 0\}.$$
This is at most $N_\mathbf{G}^{K^\prime}(c_2\lambda)$. 

Since $N\neq 2$, $\mathbf G$ is not $SO_2^\ast$. Hence $\mathbf G^1(\mathbb R)$ is not compact and we can use Donnelly's upper bound from Proposition \ref{adelic donnelly}. It gives us $N_\mathbf{G}^{K^\prime}(c_2\lambda)\leq c_3\lambda^{d(G)/2}$ for $\lambda$ large enough and some $c_3$ as we needed. Here 
$d(G)$ is the dimension of the symmetric space for $\mathbf G$, given by the table preceding Proposition \ref{adelic donnelly}.
\end{proof}

%
%  revised  introduction.tex  2011-09-02  Mark Senn  http://engineering.purdue.edu/~mark
%  created  introduction.tex  2002-06-03  Mark Senn  http://engineering.purdue.edu/~mark
%
%  This is the introduction chapter for a simple, example thesis.
%

\chapter{Lower Bound}

In this chapter we prove a lower bound on the number $N^K_{\mathrm{sd}}(\lambda)$ of self-dual cuspforms on $GL_N(\mathbb{A_Q})$.
The idea of the proof is similar to that of the upper bound (i.e., obtaining the self-dual representations as functorial lifts from classical groups), 
but we have to overcome two additional difficulties: first, a cuspidal automorphic representation 
on a classical group need not lift to a cuspidal representation on $GL_N$. 
We deal with this issue in Section \ref{lower noncusp}. And second, we need to obtain an estimate 
on the size of ($K$-fixed elements of) global packets. 
However, to obtain a lower bound it suffices to consider only the case when $K$ is a maximal compact subgroup at all places, which greatly simplifies dealing with the corresponding packets -- the result that we need in this case was already discussed in Lemma \ref{arthur lemma}.
The proof of the lower bound is then in Section \ref{lower proof}.

Let us note that in this chapter we need to use Weak Weyl's Law (formulated in Theorem \ref{weak weyl}) to obtain 
a lower bound on the number $N_G^K(\lambda)$ of cusp forms on a classical group $\mathbf G$. 
This seems not to have appeared in literature yet, but it seems to follow from other results towards Weyl's Law (see the remarks immediately preceding and following Theorem \ref{weak weyl}).

Throughout this chapter, all groups and representations are adelic, unless stated otherwise.

\section{Non-Cuspidal Lifts}\label{lower noncusp}

Let $\mathbf G=\mathbf G_{N^\prime}$ be a split group $SO_{N^\prime}$ or $Sp_{N^\prime}$ for ${N^\prime}=2n+1$ or ${N^\prime}=2n$, respectively (note that we are not considering the even special orthogonal groups). 
Denote by $N$ the size of the corresponding dual group, i.e., $N={N^\prime}-1=2n$ if ${N^\prime}=2n+1$ is odd and $N={N^\prime}+1=2n+1$ if ${N^\prime}=2n$ is even.

Take $\lambda>0$ and the compact subgroup $K=K_\infty\times \prod K_p$  of $\mathbf G(\mathbb A)$ with $K_p=\mathbf G(\mathcal O_p)$ and $K_\infty$ a fixed maximal compact subgroup of $\mathbf G(\mathbb R)$ (adopted to a standard Borel subgroup so that we have the corresponding Iwasawa decomposition). Denote by
$N_{G, \mathrm{\ nc}}^K(\lambda)$ the sum $\sum_{\lambda(\pi)\leq\lambda}\dim \pi^K$, ranging
only over cuspidal automorphic representations $\pi$ such that their lift $\Pi$ to $GL_{N}(\aq)$ is not cuspidal.

It follows from Theorem \ref{arthur 152} that 
such a representation $\Pi$ is an isobaric sum $\Pi=\Pi_1\boxplus\cdots\boxplus\Pi_k$, where $\sum N_i=N$ and each
%$$\Pi_i=\mu_i\left(\frac{n_i-1}2\right)\boxplus \mu_i \left(\frac{n_i-3}2\right)\boxplus\cdots\boxplus\mu_i\left(-\frac{n_i-1}2\right)$$
$\Pi_i$ is an automorphic representation in the discrete spectrum of $GL_{N_i}(\aq)$. %,
%$N_i=m_in_i$ and $\mu_i$ is a cuspidal automorphic representation of 
%$GL_{m_i}(\aq)$.

Moreover, all the $\Pi_i$ are self-dual and are lifts of automorphic representations $\pi_i$ in the discrete spectrum of $\mathbf G_{N^\prime_i}(\aq)$. 
Here again $N_i=N^\prime_i\pm 1$ depending on the parity of $N^\prime_i$ as at the beginning of this section.
Note that all the $N^\prime_i$ have the same parity as $N^\prime$.
By conjugating if necessary, we can assume that the isobaric sum is with respect to the standard Levi $M=\prod_i GL_{N_i}(\aq)$. 

We shall use these facts to estimate the number $N_{G, \mathrm{\ nc}}^K(\lambda)$.
For that we again need to relate Laplacian eigenvalues %and $K$-fixed vectors 
of $\pi$ and $\pi_i$.

\begin{lemma}\label{lower laplace}
Let $M=\prod_i GL_{N_i}(\aq)$ be a standard Levi subgroup of $GL_{N}(\aq)$.

a) There are constants $c_{i}>0$ and $d$ (independent of $\pi$) so that $\lambda(\pi)=\sum_i c_{i}\lambda(\pi_i)+d$ for every cuspidal automorphic representation $\pi$ of $\mathbf G_{N^\prime}(\aq)$ which lifts to an isobaric sum $\Pi=\Pi_1\boxplus\cdots\boxplus\Pi_k$ with respect to the Levi $M$. Here $\pi_i$ is a representation in the discrete spectrum of $\mathbf G_{N^\prime_i}(\aq)$ which lifts to $\Pi_i$.

b) There are constants $c>0$ and $e$ such that $\lambda(\pi_i)\leq c\lambda(\pi)+e$ for all such $\pi$ and for all $i$.
\end{lemma}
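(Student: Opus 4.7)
The plan is to transport the problem to the $GL_N$ side via Theorem \ref{main laplace} and then compute the Laplacian eigenvalue of an isobaric sum directly.

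First, I observe that Theorem \ref{main laplace} applies to each pair $(\pi_i,\Pi_i)$ even though $\pi_i$ is only in the discrete spectrum rather than cuspidal: its proof works at the level of a single irreducible admissible real representation and its functorial lift, using only the relation of their Langlands parameters, and makes no use of cuspidality. Consequently, with positive constants $c_G, c_{G_i}$ and constants $d_G, d_{G_i}$ depending only on the groups, one has $\Lambda(\Pi)=c_G\lambda(\pi)+d_G$ and $\Lambda(\Pi_i)=c_{G_i}\lambda(\pi_i)+d_{G_i}$.

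Next I would compute $\Lambda(\Pi)$ in terms of the $\Lambda(\Pi_i)$. At the archimedean place $\Pi_\infty$ is an irreducible subquotient of the normalized parabolic induction of $\Pi_{1,\infty}\otimes\cdots\otimes\Pi_{k,\infty}$ from $\prod_i GL_{N_i}(\mathbb R)$, so its infinitesimal character is the concatenation $\mu=(\mu^{(1)},\ldots,\mu^{(k)})\in\liehc^*$ of those of the $\Pi_{i,\infty}$. Setting $\bar\mu^{(i)}=\tfrac1{N_i}\sum_j\mu^{(i)}_j$ and $\bar\mu=\tfrac1N\sum_j\mu_j$, the formula $\Lambda(\Pi)=\tfrac{2}{N}\sum_j(\mu_j-\bar\mu)^2-|\delta_{SL_N}|^2$ from the proof of Theorem \ref{main laplace} combined with the elementary identity $\sum_j\mu_j^2=\sum_i\bigl(\sum_j(\mu^{(i)}_j-\bar\mu^{(i)})^2+N_i(\bar\mu^{(i)})^2\bigr)$ yields after routine algebra
\[
\Lambda(\Pi)=\sum_i\tfrac{N_i}{N}\Lambda(\Pi_i)+\tfrac{2}{N}\Bigl(\sum_i N_i(\bar\mu^{(i)})^2-N\bar\mu^2\Bigr)+D_0,
\]
where $D_0=\sum_i\tfrac{N_i}{N}|\delta_{SL_{N_i}}|^2-|\delta_{SL_N}|^2$. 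Crucially, self-duality of each $\Pi_i$ forces the multiset $\{\mu^{(i)}_j\}$ of archimedean Langlands parameters to be stable under negation, so $\bar\mu^{(i)}=0$ (and hence $\bar\mu=0$), and the middle correction vanishes, leaving $\Lambda(\Pi)=\sum_i\tfrac{N_i}{N}\Lambda(\Pi_i)+D_0$. Substituting the relations from the first step yields part (a) with $c_i=\tfrac{N_i c_{G_i}}{N c_G}>0$ and $d$ an explicit constant depending only on $M$.

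For part (b), I would use the standard fact that each $\lambda(\pi_j)$ is bounded below by an absolute constant $L_j$ depending only on $\mathbf G_{N'_j}$, a consequence of the unitarity of $\pi_{j,\infty}$ and the classical bounds on infinitesimal characters of unitary representations of real reductive groups. Rearranging part (a) gives $c_i\lambda(\pi_i)\leq\lambda(\pi)-d-\sum_{j\ne i}c_j L_j$, whence $\lambda(\pi_i)\leq c\lambda(\pi)+e$ with $c=1/c_i$. The main obstacle is keeping the normalization of the Casimir consistent across the different groups $SL_N$ and $SL_{N_i}$; the observation that self-duality forces the archimedean central-character contributions $\bar\mu^{(i)}$ to vanish is what cleans up an otherwise $\pi$-dependent correction into the single constant $d$.
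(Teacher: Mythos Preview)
Your proposal is correct and follows essentially the same route as the paper: both arguments rest on the decomposition of the archimedean Langlands parameter (equivalently, infinitesimal character) of $\Pi_\infty$ as the direct sum of those of the $\Pi_{i,\infty}$, and then evaluate on the Casimir. The paper is terser---it records $\iota(\mu)=\bigoplus_i\iota(\mu_i)$ and evaluates directly, and for part (b) simply invokes $\lambda(\pi_i)\geq 0$---while you route the computation explicitly through $\Lambda(\Pi)$ on $SL_N$ and handle the trace term via self-duality of the $\Pi_i$; this extra care is not strictly needed here since each $\iota(\mu_i)$, coming from a classical group, is already trace-zero.
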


\begin{proof}
The proof of part a) is similar to the proof of Theorem \ref{main laplace}:
Let $\varphi$ and $\varphi_i$ be the Langlands parameters of $\pi_\infty$ and $\pi_{i, \infty}$. Let $\iota$ be the embedding $G^\vee\hookrightarrow GL$. Here we abuse notation a little by not distinguishing between the embeddings $\iota$ for groups of different size. 
Then $\iota\circ\varphi$ and $\iota\circ\varphi_i$ are Langlands parameters of $\Pi_\infty$ and $\Pi_{i, \infty}$. 
By the properties of an isobaric sum, we have $\iota\circ\varphi=\oplus_i\ \iota\circ\varphi_i$.

Let $\mu$ and $\mu_i$ be infinitesimal characters of $\pi_\infty$ and $\pi_{i, \infty}$. Then $\iota(\mu)$ and $\iota(\mu_i)$ are infinitesimal characters of $\Pi_\infty$ and $\Pi_{i, \infty}$. We see that $\iota(\mu)=\oplus_i\ \iota(\mu_i)$.
Evaluating this infinitesimal character on the Casimir element as in the proof of Theorem \ref{main laplace} proves our claim.

Part b) now easily follows from a) using the fact that each $\lambda(\pi_i)\geq 0$.
\end{proof}
%
%\begin{lemma}
%Let $M=\prod_i GL_{N_i}(\aq)$ be a standard Levi of $GL_{N}(\aq)$ and $K=K_\infty\times \prod K_p$ an arithmetic compact subgroup of $\mathbf G_N(\mathbb A)$ as above. Then there are congruence subgroups $K_i$ of $\mathbf G_{N_i}(\aq)$ such that:
%
%For each cuspidal automorphic representation $\pi$ of $\mathbf G_N(\aq)$ which lifts to an isobaric sum $\Pi=\Pi_1\boxplus\cdots\boxplus\Pi_k$ with respect to the Levi $M$ 
%and satisfies $\pi^K\neq 0$, one has $\pi_i^{K_i}$ for all $i$.
%\end{lemma}
%
%\begin{proof}
%Similar to the proof of Proposition \ref{upper K fixed}, except that we use Proposition \ref{rs}b).
%\end{proof}

We are now ready to prove the following proposition:

\begin{proposition}\label{lower nc}
Let $K=K_\infty\times \prod K_p$ be a compact subgroup of $\mathbf G_{N^\prime}(\mathbb A)$ with $K_p=\mathbf G_{N^\prime}(\mathcal O_p)$ and $K_\infty$ a fixed maximal compact subgroup of $\mathbf G_{N^\prime}(\mathbb R)$ (adopted to a standard Borel subgroup so that we have the corresponding Iwasawa decomposition).
There are positive constants $c$ and $\lambda_0$ such that $N_{G, \mathrm{\ nc}}^K(\lambda)\leq c\lambda^{d_0/2}$ for all $\lambda>\lambda_0$.

Here 
\begin{itemize}
\item $d_0=c=0$ if ${N^\prime}=1, 3$,

\item $d_0=n^2-n$ if ${N^\prime}=2n\geq 2$,

\item $d_0=n^2-n+2$ if ${N^\prime}=2n+1\geq 5$.
\end{itemize}
\end{proposition}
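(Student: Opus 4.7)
The plan is to parametrize each cuspidal $\pi$ of $\mathbf{G}$ whose lift $\Pi$ is non-cuspidal by the descent tuple $(\pi_1, \dots, \pi_k)$ coming from the isobaric decomposition $\Pi = \Pi_1 \boxplus \cdots \boxplus \Pi_k$ discussed before the proposition, and to bound each ``shape'' $(N'_1, \dots, N'_k)$ separately. Since $\sum N_i = N$ is fixed, there are only finitely many shapes to consider, so it suffices to show that each contributes $O(\lambda^{d_0/2})$ and to identify the dominant one.

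For a fixed shape, Lemma \ref{lower laplace}(b) provides constants $c, e > 0$ such that $\lambda(\pi_i) \leq c\lambda + e$ for each $i$ whenever $\lambda(\pi) \leq \lambda$. Since $\mathbf{G}$ is split and $K$ is maximal at every place, each $\pi_p$ is unramified, hence so is each $\pi_{i,p}$, and one may fix a maximal compact $K_i \subset \mathbf{G}_{N'_i}(\aq)$ and restrict to $\pi_i$ with $\pi_i^{K_i} \neq 0$. By Lemma \ref{arthur lemma}, each global Arthur packet of $\mathbf{G}_{N'_i}$ contributes at most one $K_i$-spherical representation, and distinct tuples $(\pi_1, \dots, \pi_k)$ yield distinct $\pi$; consequently the count for this shape is at most $\prod_i N^{K_i}_{G_{N'_i}}(c\lambda + e)$. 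Applying Donnelly's upper bound (Proposition \ref{adelic donnelly}) to each factor yields $O\bigl(\lambda^{\sum_i d(G_{N'_i})/2}\bigr)$, where the values $d(G_{N'_i})$ are read off the table preceding that proposition.

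It remains to maximize $\sum_i d(G_{N'_i})$ over admissible shapes. For $\mathbf{G} = SO_{2n+1}$ with $n \geq 2$, writing $N'_i = 2n'_i + 1$ with $\sum n'_i = n$ and each $n'_i \geq 1$, non-cuspidality of $\Pi$ forces $k \geq 2$; by convexity of $n'_i(n'_i + 1)$ the sum is maximized by concentrating mass on one piece, giving the extremal shape $k = 2$, $(n'_1, n'_2) = (n - 1, 1)$, and $d_0 = (n - 1)n + 2 = n^2 - n + 2$. For $\mathbf{G} = Sp_{2n}$ with $n \geq 2$, writing $N'_i = 2n'_i$ and $N_i = 2n'_i + 1$, the parity of $N = 2n + 1$ forces $k$ to be odd, and non-cuspidality requires $k \geq 3$; the maximum is attained at $k = 3$ with $(n'_1, n'_2, n'_3) = (n - 1, 0, 0)$, corresponding to one large cuspidal piece on $Sp_{2n-2}$ together with two $GL_1$-character pieces, giving $d_0 = (n - 1)n = n^2 - n$. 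The boundary cases $N' = 1, 3$ admit only shapes collapsing to $k = 1$ or requiring coincident trivial characters, so $N_{G, \mathrm{\ nc}}^K(\lambda) = O(1)$, consistent with $d_0 = c = 0$.

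The main technical obstacle is checking that every non-extremal shape is strictly subdominant. Two subtleties deserve care: (i) the distinctness condition $\Pi_i \not\simeq \Pi_j$ from Theorem \ref{functoriality} may eliminate certain shapes, but since dropping it only enlarges the count, one still obtains a valid upper bound at the dominant exponent; and (ii) some $\Pi_i$ may itself be a non-cuspidal Speh representation $\mu_i \boxtimes \nu_{d_i}$ with $d_i \geq 2$, in which case the parameter is determined by a cuspidal $\mu_i$ on a strictly smaller $GL_{m_i}$ (with $m_i d_i = N_i$); applying Donnelly on the classical group to which $\mu_i$ descends shows that such contributions remain strictly subdominant to the purely cuspidal case analysed above.
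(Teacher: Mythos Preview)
Your approach is essentially the same as the paper's: stratify by the shape (equivalently, the standard Levi $M$), use Lemma~\ref{lower laplace} to bound each $\lambda(\pi_i)$, invoke Lemma~\ref{arthur lemma} to pass from $\pi$ to the tuple $(\pi_i)$, apply Donnelly's upper bound on each factor, and then maximise $\sum_i d(G_{N'_i})$ over admissible shapes. Your identification of the extremal shapes and the resulting values of $d_0$ agree with the paper's.

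One logical slip: you write ``distinct tuples $(\pi_1,\dots,\pi_k)$ yield distinct $\pi$; consequently the count for this shape is at most $\prod_i N^{K_i}_{G_{N'_i}}(c\lambda+e)$.'' That implication runs the wrong way for an upper bound. What you need (and what Lemma~\ref{arthur lemma} actually gives you, via uniqueness of the $K_i$-spherical member of each packet) is that $\pi$ determines $\Pi$, hence $(\Pi_i)$, hence the tuple $(\pi_i)$ uniquely; thus the number of $\pi$ is bounded by the number of tuples. The paper states exactly this direction. Your point~(ii) on Speh constituents is in fact more careful than the paper, which simply asserts that the Donnelly bound holds for the discrete spectrum as well as the cuspidal one; your observation that a Speh $\Pi_i=\mu_i\boxtimes\nu_{d_i}$ is determined by the cuspidal $\mu_i$ on a strictly smaller $GL_{m_i}$ (hence contributes a strictly smaller exponent) is the right way to justify that assertion, and also covers the $k=1$ non-cuspidal case that your clause ``non-cuspidality forces $k\geq 2$'' otherwise misses.
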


Note that in all of these cases, $d_0$ is strictly smaller than the dimension $d$ from Weak Weyl's Law for $\mathbf G$.

\begin{proof}
Denote by
$N_{G, \mathrm{\ nc,\ } M}^K(\lambda)$ the sum $\sum_{\lambda(\pi)\leq\lambda} \dim\pi^K$, ranging
only over cuspidal automorphic representations $\pi$ which lift to isobaric sums $\Pi=\Pi_1\boxplus\cdots\boxplus\Pi_k$ with respect to the Levi $M$. 
We see that $N_{G, \mathrm{\ nc}}^K(\lambda)$ is a finite sum $\sum_M N_{G, \mathrm{\ nc,\ } M}^K(\lambda)$, the sum ranging over standard Levis $M$. 
Thus it suffices to estimate $N_{G, \mathrm{\ nc,\ } M}^K(\lambda)$.

Since $K$ is maximal at every place, $\dim\pi^K\leq 1$ for every $\pi$. Thus 
$$N_{G, \mathrm{\ nc,\ } M}^K(\lambda)=\#\{\pi\ |\ \lambda(\pi)\leq\lambda,\ \pi^K\neq 0,$$ $$\pi\mathrm{\ lifts\ to\ }\Pi_1\boxplus\cdots\boxplus\Pi_k
\mathrm{\ with\ respect\ to\ }M\}.$$

For such a $\pi$, let $\Pi=\Pi_1\boxplus\cdots\boxplus\Pi_k$ be its functorial lift and let 
$\pi_i$ be the automorphic representations in the discrete spectrum of $G_{N_i^\prime}$ which lift to $\Pi_i$ (since each packet contains at most one $K$-fixed element by Lemma \ref{arthur lemma}, $\pi_i$ are uniquely determined by $\pi$). 
Note that $\pi_i$ is spherical at all places (with respect to suitable maximal compact subgroups).

By Proposition \ref{lower laplace} we have $\lambda(\pi_i)\leq c\lambda(\pi)+e$, where $c$ and $e$ are constants independent of $\pi$. Hence we see that $N_{G, \mathrm{\ nc,\ } M}^K(\lambda)$ is less than or equal to the product $\prod_i N_{G(N_i^\prime), \mathrm{\ disc}}^K(c\lambda+e)$,
where $N_{G(N_i^\prime), \mathrm{\ disc}}^K(c\lambda+e)$ is the corresponding sum over the discrete spectrum of $G(N_i^\prime)$.

The same upper bound holds for the discrete and cuspidal spectrum, and so we conclude that 
$$N_{G, \mathrm{\ nc,\ } M}^K(\lambda)\leq c_M\lambda^{\sum d_i/2},$$
where $c_M$ is a constant depending only on $M$ and $d_i$ is the dimension of the symmetric space for $G_{N_i^\prime}$.

When $N$ is even, $\sum d_i$ will be the largest when $k=2$ and $N_1=2$, $N_2=N-2$.
When $N$ is odd, $\sum d_i$ will be the largest when $k=3$ and $N_1=N_2=1$, $N_3=N-2$.
The proposition then follows from Table \ref{table dimensions}.
\end{proof}

We would expect to have an analogue of Proposition \ref{lower nc} for general $K$, not only for maximal one. The obstacle is our (at least the author's) present lack of control over the behaviour of $K$-spherical members of $A$-packets for general $K$.

\section{Statement and Proof of Theorem \ref{lower lower}}\label{lower proof}

Let us now prove the lower bound on the number of self-dual representations.

\begin{theorem}\label{lower lower}
Let $K$ be a principal congruence subgroup $K(M)$ of $GL_N(\aq)$.
There are positive constants $c$ and $\lambda_0$ such that $N^K_{\mathrm{sd}}(\lambda)\geq c\lambda^{d/2}$ for all $\lambda>\lambda_0$.
Here $d=n^2+n$ for $N=2n+\varepsilon$ with $\varepsilon=0, 1$.
\end{theorem}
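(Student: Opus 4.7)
The plan is to reduce to the maximally spherical case and then produce the required self-dual cuspidal representations of $GL_N(\mathbb{A_Q})$ as functorial lifts from a suitable split classical group. Since $K(M)\subset K(1)$, every $K(1)$-fixed vector is $K(M)$-fixed, so $\dim\Pi^{K(M)}\geq\dim\Pi^{K(1)}$, giving $N_{\mathrm{sd}}^{K(M)}(\lambda)\geq N_{\mathrm{sd}}^{K(1)}(\lambda)$. Thus it is enough to establish the lower bound for the everywhere-maximal subgroup $K(1)=O(N)\times\prod_p GL_N(\mathcal O_p)$. Choose $\mathbf G=Sp_{2n}$ when $N=2n+1$ and $\mathbf G=SO_{2n+1}$ when $N=2n$; by Table~\ref{table dimensions}, the associated symmetric space for $\mathbf G(\mathbb R)$ has dimension exactly $d=n^2+n$. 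Let $K'=K'_\infty\times\prod_p\mathbf G(\mathcal O_p)$ be the corresponding everywhere-maximal compact subgroup of $\mathbf G(\mathbb A)$.

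By Theorem~\ref{main laplace}, the Laplacian eigenvalue of the functorial lift $\Pi$ of $\pi$ satisfies $\lambda(\Pi)=c_{\mathbf G}\lambda(\pi)+d_{\mathbf G}$. Thus, setting $\lambda'=(\lambda-d_{\mathbf G})/c_{\mathbf G}$, every $K'$-spherical cuspidal $\pi$ of $\mathbf G(\mathbb A)$ with $\lambda(\pi)\leq\lambda'$ lifts to a self-dual $\Pi$ with $\lambda(\Pi)\leq\lambda$. Weak Weyl's Law (Theorem~\ref{weak weyl}), available for $\mathbf G$ via \cite{lab-m} and \cite{lv}, provides at least $c_1(\lambda')^{d/2}$ such $\pi$ for $\lambda$ large. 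Among these, the $\pi$ whose lift $\Pi$ fails to be cuspidal are bounded above by $c_2(\lambda')^{d_0/2}$ with $d_0<d$, thanks to Proposition~\ref{lower nc}. Subtracting gives at least $c_3(\lambda')^{d/2}$ cuspidal $\pi$ whose lifts $\Pi$ are cuspidal self-dual representations of $GL_N(\mathbb A)$.

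It remains to pass from the count on the classical side to $N_{\mathrm{sd}}^{K(1)}(\lambda)$. Each such $\Pi$ is unramified at every finite place where $\pi$ is (via the Satake isomorphism compatibility of functoriality), and is $O(N)$-spherical at infinity, so $\Pi^{K(1)}\neq 0$ and $\dim\Pi^{K(1)}=1$ contributes one unit to the sum. Distinct $\pi$ lying in distinct global Arthur packets lift to distinct $\Pi$ because $\Pi$ determines the parameter $\psi\in\widetilde\Psi_2(\mathbf G)$. Within a single packet different $\pi$ lift to the same $\Pi$, but Lemma~\ref{arthur lemma} guarantees that each packet contains at most one $K'$-spherical element, so the map $\pi\mapsto\Pi$ is injective on the set we are counting. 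Combining these ingredients yields
\[
N_{\mathrm{sd}}^{K(1)}(\lambda)\;\geq\; c_1(\lambda')^{d/2}-c_2(\lambda')^{d_0/2}\;\geq\;c\,\lambda^{d/2}
\]
for $\lambda$ sufficiently large, proving the theorem.

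The main subtlety is precisely the injectivity of $\pi\mapsto\Pi$ on $K$-spherical representations: it is here that the restriction to the everywhere-maximal $K$ is essential, since for a general open compact $K(M)$ one has no good control of the number of $K$-fixed elements inside a global $A$-packet, and Lemma~\ref{arthur lemma} fails. Extending to non-maximal $K$ would require a much finer analysis of global packets (and, as noted in the introduction, seems to need inputs of the type used by Lapid--M\"uller \cite{lm} in their twisted trace formula approach for $GL_N$).
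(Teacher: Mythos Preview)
Your proof is correct and follows essentially the same route as the paper: reduce to the everywhere-maximal compact subgroup, lift $K'$-spherical cusp forms from $\mathbf G=SO_{2n+1}$ or $Sp_{2n}$ using Weak Weyl's Law, discard the non-cuspidal lifts via Proposition~\ref{lower nc}, and invoke Lemma~\ref{arthur lemma} for injectivity of $\pi\mapsto\Pi$. Your write-up is slightly more explicit about why the lift $\Pi$ is $K(1)$-spherical and about the monotonicity $N_{\mathrm{sd}}^{K(M)}\geq N_{\mathrm{sd}}^{K(1)}$, but the argument is the same.
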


\begin{proof}
Since the dimension $d$ depends only on $N$ and not on $K$, it suffices to prove the theorem for $K=K_\infty\times \prod K_p$ with $K_p=GL_N(\mathcal O_p)$ and $K_\infty=O(N)$.
We shall obtain the lower bound by considering lifts only from $\mathbf G=\mathbf G_{N^\prime}=SO_{2n+1}$ or $Sp_{2n}$, when $N=2n$ or $2n+1$, respectively.

Denote by $K^\prime=K^\prime_\infty\times \prod K^\prime_p$ a compact subgroup of $\mathbf G(\mathbb A)$ with $K^\prime_p=\mathbf G(\mathcal O_p)$ and $K^\prime_\infty$ a fixed maximal compact subgroup of $\mathbf G(\mathbb R)$ (adopted to a standard Borel subgroup so that we have the corresponding Iwasawa decomposition).
By Weak Weyl's Law (Theorem \ref{weak weyl}), we have (for $\lambda$ sufficiently large) $N^{K^\prime}_{G}(\lambda)\geq c_1\lambda^{d/2}$ for a positive constant $c_1$ and $d=n^2+n$ as above. 

Let $N^{K^\prime}_{G,\mathrm{\ cusp}}(\lambda)$ be the sum $\sum_{\lambda(\pi)\leq\lambda}\dim \pi^{K^\prime}$, ranging only over cuspidal automorphic representations $\pi$ such that their lift $\Pi$ to $GL_{N}(\aq)$ is cuspidal. 
Combining Proposition \ref{lower nc} with Weak Weyl's Law, we obtain $N^{K^\prime}_{G,\mathrm{\ cusp}}(\lambda)\geq c_2\lambda^{d/2}$ for a positive constant $c_2$.

Since the dimension of the space of $K^\prime$-fixed vectors of $\pi$ is 0 or 1, we see that 
$N^{K^\prime}_{G,\mathrm{\ cusp}}(\lambda)$ is equal to the number of such $K^\prime$-spherical representations $\pi$. 
Each of these representations lifts to a $K$-spherical cuspidal automorphic representation $\Pi$ with Laplacian eigenvalue $\lambda(\Pi)=c_G\lambda(\pi)+d_G$ by Theorem \ref{main laplace} (for certain constants $c_G, d_G$ independent of $\pi$).

As we observed in Lemma \ref{arthur lemma}, a global $A$-packet has at most one $K^\prime$-spherical element, and so the functoriality mapping $\pi\mapsto\Pi$ is an injection. We conclude that
$$N^K_{\mathrm{sd}}(c_G\lambda+d_G)\geq N^{K^\prime}_{G,\mathrm{\ cusp}}(\lambda)\geq c_2\lambda^{d/2},$$
which proves the theorem.
\end{proof}

Together with Theorem \ref{upper upper}, this finishes the proof of our main Theorem \ref{intro main}.

\

Note that when Weyl's Law is known for $G$ (which is the case when $N=2n$ and $\mathbf G=SO_{2n+1}$ by \cite{lv}) and $K$ is a maximal compact subgroup, all of the estimates in the proofs of Theorems \ref{upper upper} and \ref{lower lower} are in fact asymptotically equalities. 
Hence we obtain Weyl's Law (without an error term) for self-dual representations of $GL_{2n}(\aq)$ in this case as:

\begin{corollary}\label{lower cor}
Let $N=2n$ be even and $K=K_\infty\times \prod K_p$ with $K_p=GL_N(\mathcal O_p)$ and $K_\infty=O(N)$. Then $N^K_{\mathrm{sd}}(\lambda)= c\lambda^{d/2} + o(\lambda^{d/2})$ for
an explicit positive constant $c$ and $d=n^2+n$.
\end{corollary}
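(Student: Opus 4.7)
The plan is to combine the proofs of Theorems \ref{upper upper} and \ref{lower lower} under the stronger hypothesis that $K$ is maximal at every place and that full Weyl's Law is available on $G=SO_{2n+1}(\aq)$ via Lindenstrauss--Venkatesh \cite{lv}. The key point is that under these assumptions every inequality in those proofs may be sharpened to an asymptotic equality, so that the leading term on each side matches.

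First I would decompose $N^K_{\mathrm{sd}}(\lambda)=\sum_{\mathbf G\in S_K} N^K_{\mathrm{sd},\mathbf G}(\lambda)$, with the finite set $S_K$ provided by Lemma \ref{upper sk}, consisting of $SO_{2n+1}$, $SO_{2n}$, and possibly finitely many non-split $SO_{2n}^{\ast}$. Consulting Table \ref{table dimensions}, the dimensions of the associated symmetric spaces are $n^2+n$, $n^2$, and $n^2-n$ respectively. Applying Donnelly's bound (Proposition \ref{adelic donnelly}) as in the proof of Theorem \ref{upper upper} to each summand and using Theorem \ref{main laplace} to convert between the two Laplacians shows that all contributions except the one from $SO_{2n+1}$ are $O(\lambda^{n^2/2})=o(\lambda^{d/2})$. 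Thus the asymptotics of $N^K_{\mathrm{sd}}(\lambda)$ are governed entirely by $N^K_{\mathrm{sd},SO_{2n+1}}(\lambda)$.

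Next I would analyze this remaining term by passing to the descent. Let $K^\prime$ be the maximal compact subgroup of $SO_{2n+1}(\aq)$ chosen as in the proof of Theorem \ref{lower lower}, and let $N^{K^\prime}_{G,\mathrm{cusp}}(\mu)$ count $K^\prime$-spherical cuspidal $\pi$ on $SO_{2n+1}$ with $\lambda(\pi)\leq\mu$ whose functorial lift to $GL_{2n}$ is cuspidal. Weyl's Law on $SO_{2n+1}$ gives $N^{K^\prime}_G(\mu)=c_0\mu^{d/2}+o(\mu^{d/2})$, and Proposition \ref{lower nc} bounds the non-cuspidal-lift contribution by $O(\mu^{d_0/2})$ with $d_0<d$, so $N^{K^\prime}_{G,\mathrm{cusp}}(\mu)=c_0\mu^{d/2}+o(\mu^{d/2})$. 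By Lemma \ref{arthur lemma} the map $\pi\mapsto\Pi$ is injective on $K^\prime$-spherical cuspidal representations, and by the descent description of the image of functoriality (Theorem \ref{functoriality}) every $K$-spherical self-dual cuspidal $\Pi$ on $GL_{2n}$ descending to $SO_{2n+1}$ arises in this way from a unique such $\pi$. Combined with the fact that $\dim\Pi^K=\dim\pi^{K^\prime}=1$ for these representations, this produces an exact count: the number of such $\Pi$ with $\lambda(\Pi)\leq\lambda$ equals $N^{K^\prime}_{G,\mathrm{cusp}}(\mu)$ whenever $\mu$ and $\lambda$ are related by Theorem \ref{main laplace}, namely $\lambda=c_G\mu+d_G$ with $c_G=(2n-1)/(2n)$.

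Putting these together and substituting $\mu=(\lambda-d_G)/c_G$ into the asymptotic for $N^{K^\prime}_{G,\mathrm{cusp}}(\mu)$ yields $N^K_{\mathrm{sd},SO_{2n+1}}(\lambda)=c\lambda^{d/2}+o(\lambda^{d/2})$ with $c=c_0\, c_G^{-d/2}$, while the contributions from $SO_{2n}$ and $SO_{2n}^{\ast}$ are absorbed into the error. The argument is essentially assembly; there is no single hard step once Weyl's Law on $SO_{2n+1}$ is granted. The most delicate point is checking that the affine shift $d_G$ and the strictly lower-dimensional non-cuspidal contribution can both be swept into the $o(\lambda^{d/2})$ term, which is routine because $d>0$ and the exponents strictly decrease.
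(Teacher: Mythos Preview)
Your proposal is correct and follows essentially the same route as the paper: isolate the $SO_{2n+1}$ term as the only one of order $\lambda^{d/2}$, use Weyl's Law on $SO_{2n+1}$ together with Proposition \ref{lower nc}, and match counts via the bijection from Lemma \ref{arthur lemma} and $\dim\Pi^K=\dim\pi^{K'}=1$, arriving at the same constant $c=c_0\,c_G^{-d/2}$. The only difference is that the paper treats $N=2$ separately (showing directly that every $K$-spherical cusp form on $GL_2$ has trivial central character, hence is self-dual), which sidesteps having to apply Donnelly's bound to the torus $SO_2$.
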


\begin{proof}
We just need to check all the estimates in the proofs of upper and lower bounds.

Let us first consider the case $N=2$. Let $\Pi$ be an automorphic representation of $GL_2(\aq)$ with central character $\omega$.
Its dual is then $\widetilde{\Pi}\simeq \Pi\otimes\omega^{-1}$.
If $\Pi$ is $K$-spherical (for maximal $K$ as above), the central character $\omega$ is trivial on $\mathcal O_p^\times$ for every $p$. Hence $\omega$ is an unramified character of $\aq^\times$, and so it is trivial (class field theory attaches to $\omega$ an extension of $\mathbb Q$ which is unramified everywhere, and so it is just the trivial extension).
We see that every $K$-spherical automorphic representation of $GL_2(\aq)$ is self-dual, and so our claim is just Weyl's Law in this case.

Take now $N\geq 4$. With notation as in the proof of Theorem \ref{upper upper}, we have $N^K_{\mathrm{sd}}(\lambda)=\sum_{\mathbf G\in S}N^K_{\mathrm{sd}, \mathbf G}(\lambda)$.
As we have seen in the proof of \ref{upper upper}, the main term in the sum on the right hand side is the one corresponding to $\mathbf G=SO_{2n+1}$, and so 
$N^K_{\mathrm{sd}}(\lambda)=N^K_{\mathrm{sd},\ SO(2n+1)}(\lambda)+ o(\lambda^{d/2})$.
From now on, let $\mathbf G=SO_{2n+1}$.
When $\Pi$ is unramified everywhere, $\dim\Pi^K=1$, and so 
$$N^K_{\mathrm{sd},\mathbf G}(\lambda)=\#\{\Pi\ |\ \lambda(\Pi)\leq\lambda,\ \Pi^K\neq 0,\ \Pi\mathrm{\ descends\ to\ }\mathbf G\}.$$

Let $K^\prime=K^\prime_\infty\times \prod K^\prime_p$ be a compact subgroup of $\mathbf G(\mathbb A)$ with $K^\prime_p=\mathbf G(\mathcal O_p)$ and $K^\prime_\infty$ a fixed maximal compact subgroup of $\mathbf G(\mathbb R)$ (adopted to a standard Borel subgroup so that we have the corresponding Iwasawa decomposition).

Take a $K$-spherical self-dual representation $\Pi$ and denote by $\pi$ its descent to 
$\mathbf G(\aq)$. Then $\pi$ is $K^\prime$-spherical, and by Lemma \ref{arthur lemma}, $\pi$ is unique. By Theorem \ref{main laplace}, the Laplacian eigenvalue of $\pi$ is $\lambda(\Pi)=c_G\lambda(\pi)+d_G$ (for suitable constants $c_G, d_G$).

Hence, with notation as in the proof of \ref{lower lower}, 
$N^K_{\mathrm{sd},\mathbf G}(c_G\lambda+d_G)=N^{K^\prime}_{G,\mathrm{\ cusp}}(\lambda)$ is the number of cusp forms of $\mathbf G(\aq)$ with Laplacian eigenvalue bounded from above by $\lambda$, whose lifts to $GL_N(\aq)$ are cuspidal.
By Proposition \ref{lower nc} and by Weyl's Law for $\mathbf G$, we have
$$N^{K^\prime}_{G,\mathrm{\ cusp}}(\lambda)=
N^{K^\prime}_{G}(\lambda)-N^{K^\prime}_{G,\mathrm{\ nc}}(\lambda)=c_1\lambda^{d/2} + o(\lambda^{d/2})$$ 
for an explicit constant $c_1$ (here $N^{K^\prime}_{G,\mathrm{\ nc}}(\lambda)$ denotes the number of cusp forms whose lift is non-cuspidal).

Putting everything together, we obtain 
$$N^K_{\mathrm{sd}}(c_G\lambda+d_G)=N^K_{\mathrm{sd},\mathbf G}(c_G\lambda+d_G)+ o(\lambda^{d/2})=N^{K^\prime}_{G,\mathrm{\ cusp}}(\lambda)+ o(\lambda^{d/2})=c_1\lambda^{d/2} + o(\lambda^{d/2}).$$
This finishes the proof when we set $c=c_1c_G^{-d/2}$.
\end{proof}

A similar statement for $K$-spherical self-dual representations of $GL_{2n+1}(\aq)$ would follow from Weyl's Law for $Sp_{2n}(\aq)$.
However, regardless of the parity of $N$, we are crucially relying on the assumption that $K$ is maximal everywhere. Otherwise we do not know how to obtain sufficiently precise estimates in several of the steps of the proof, namely, for the choice of a corresponding compact subgroup $K^\prime$, 
for the dimension of $K$- and $K^\prime$-fixed vectors, 
and for the sizes of global $A$-packets.

% Summary and/or conclusions are optional but often used.
% The summary and/or conclusions often are the last
% major division(s) of the text.
% Reference: TM 32.
% CHANGE NEXT LINE?
%\include{summary}

% Recommendations are optional.
% You may include recommendations as a major division if your
% subject matter and research dictate.
% Reference: TM 32.
% CHANGE NEXT LINE?
%\include{recommendations}

% Bibliography is required if you consulted any outside references.
% Reference: TM 32.
%
%  bibliography.tex     June 3, 2002     Mark Senn
%
%  This is the bibliography for a simple, example thesis.
%

\bibliography{bibliogra}

% Appendices are optional.
% Appendices are not necessarily part of every thesis. Appendices are used
% for supplementary illustrative material, original data, computer programs,
% and other material not necessarily appropriate for inclusion within the
% text of your thesis. 
% Reference: TM 33.
% Use "\appendix" for one appendix or "\appendices" for more than one
% appendix.
% CHANGE NEXT 7 LINES?
%\appendices
%\include{demo-citations}
%\include{demo-figures}
%\include{demo-mathematics}
%\include{demo-multicols}
%\include{demo-tables}
%\include{demo-text}

% Notes and footnotes are optional.
% Reference: TM 34.
% I have not implemented this yet.  Mark Senn 2002-06-03
%%\include{notes}

% A vita is optional for masters theses
% and required for doctoral dissertations.
% Reference: TM 13.
% CHANGE NEXT LINE?
%
%  vita.tex   2003.07.23  14:59:33   Mark Senn <mds@purdue.edu>
%
%  This is the vita for a simple, example thesis.
%
%  A vita is required only in a doctoral dissertation.
%

\begin{vita}
V\'\i t\v ezslav Kala received Bachelor's (in 2007) and Master's (in 2009) degrees in Mathematics from Charles University in Prague, Czech Republic.
He has been a PhD student in the Department of Mathematics at Purdue University from August 2009 to August 2014. During his first three years at Purdue, 
he has been supported by the International Fulbright Science and Technology Award.
\end{vita}

\end{document}